\newcommand{\eps}{\varepsilon}
\newcommand{\setsuch}[2]{\left\{ #1 \; \middle| \; #2 \right\}}
\newcommand{\restr}[2]{{\left. #1 \right|}_{#2}}
\newcommand{\subl}{\mathsmaller{<}}
\newcommand{\subg}{\mathsmaller{>}}
\newcommand{\sube}{\mathsmaller{=}}
\newcommand{\suble}{\mathsmaller{\leq}}
\newcommand{\subge}{\mathsmaller{\geq}}
\DeclareMathOperator{\Id}{Id}
\DeclareMathOperator{\diam}{diam}
\newcommand{\halfperp}{\mathsf{L}}
\newcommand{\ie}{i.e.\ }
\newcommand{\longlongrightarrow}{\xrightarrow{\hspace*{1cm}}}
\begin{document}

\title{Fundamental domains for properly discontinuous affine groups}
\author{Ilia Smilga}
\institute{I. Smilga \at
              Département de Mathématiques, Université Paris-Sud 11, F-91405 Orsay Cedex, France \\
              \email{ilia.smilga@u-psud.fr}}
\date{Received: date / Accepted: date}

\maketitle

\begin{abstract}
We construct a fundamental region for the action on the $2d+1$-dimensional affine space of some free, discrete, properly discontinuous groups of affine transformations preserving a quadratic form of signature $(d+1, d)$, where $d$ is any odd positive integer.
\keywords{Affine group \and Schottky group \and Affine manifold}
\subclass{20G20 \and 22E40 \and 20H15}
\end{abstract}

\section{Introduction}
\label{sec:intro}

\subsection{Background and motivation}
\label{sec:background}

The present paper is part of a larger effort to understand discrete groups $\Gamma$ of affine transformations (subgroups of the affine group $GL_n(\mathbb{R}) \rtimes \mathbb{R}^n$) acting properly discontinuously on the affine space $\mathbb{R}^n$. The case where $\Gamma$ consists of isometries (in other words, $\Gamma \subset O_n(\mathbb{R}) \rtimes \mathbb{R}^n$) is well-understood: a classical theorem by Bieberbach says that such a group always has an abelian subgroup of finite index.

Define a \emph{crystallographic} group to be a discrete group $\Gamma \subset GL_n(\mathbb{R}) \rtimes \mathbb{R}^n$ acting properly discontinuously and such that the quotient space $\mathbb{R}^n / \Gamma$ is compact. In \cite{Aus64}, Auslander conjectured that any crystallographic group is virtually solvable, that is, contains a solvable subgroup of finite index. Later, Milnor \cite{Mil77} asked whether this statement is actually true for any affine group acting properly discontinuously. The answer turned out to be negative: Margulis \cite{Mar83,Mar87} gave a counterexample in dimension 3. On the other hand, Fried and Goldman \cite{FG83} proved the Auslander conjecture in dimension 3 (the cases $n=1$ and $2$ are easy). Later, Abels, Margulis and Soifer proved it in dimension $n \leq 6$. See \cite{AbSur} for a survey of already known results.

In his PhD thesis and subsequent papers \cite{Dru92,Dru93}, Drumm elaborated on Margulis's result by explicitly describing fundamental domains for the groups $\Gamma$ introduced by Margulis, which allowed him in particular to deduce the topology of the quotient $\mathbb{R}^3 / \Gamma$. On the other hand, Abels, Margulis and Soifer \cite{AMS02} constructed a family of counterexamples to Milnor's conjecture in dimension $4n+3$, preserving a quadratic form of signature $(2n+2,2n+1)$. The purpose of this paper is to adapt Drumm's construction to Abels-Margulis-Soifer groups: describe a fundamental domain and deduce the topology of the quotient space. Here is the main result:

\begin{theorem}[Main Theorem]
Let $d$ be an odd positive integer. Then any generalized Schottky subgroup of $SO(d+1, d)$ with sufficiently contracting generators has a nonempty open set of affine deformations $\Gamma$ that act properly discontinuously on $\mathbb{R}^{d+1, d}$, with the quotient $\mathbb{R}^{d+1, d}/\Gamma$ homeomorphic to a solid $(2d+1)$-dimensional handlebody.  
\end{theorem}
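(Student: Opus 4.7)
The plan is to adapt Drumm's construction of crooked planes from the $(2,1)$ Margulis setting to the higher-signature Abels-Margulis-Soifer setting. First I would unpack the linear dynamics: each "sufficiently contracting" generator $g_i$ of the Schottky subgroup of $SO(d+1,d)$ acts as a proximal or partially proximal element with an attracting isotropic subspace $E_i^+$ and a repelling one $E_i^-$, and the Schottky hypothesis provides pairwise disjoint attracting/repelling neighborhoods on the appropriate flag variety. The affine deformations are parametrized by a cocycle $u \in Z^1(\Gamma_0, \mathbb{R}^{d+1,d})$, and I would interpret the "nonempty open set" as an open cone of cocycles cut out by positivity of a generalized Margulis invariant.

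Next I would build, for each generator, an analog of Drumm's crooked plane: a piecewise linear codimension-1 hypersurface in $\mathbb{R}^{d+1,d}$ assembled from two "stems" modeled on half of each maximal isotropic subspace $E_i^\pm$, glued along a common isotropic piece and completed by a "wing" taken from the orthogonal complement of a chosen null vector. This is where I expect the parity condition $d$ odd to enter, since the orientability required to consistently glue the two isotropic half-subspaces (producing a hypersurface that separates $\mathbb{R}^{2d+1}$ into two components) should depend on $2d+1$ being odd, paralleling the role of odd dimension in \cite{AMS02}. After fixing centers for these crooked hypersurfaces using the cocycle $u$, one obtains $2N$ crooked hypersurfaces paired by the $N$ generators, and I would take $D$ to be the region bounded by all of them.

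The technical heart of the argument would be a disjointness criterion: a crooked hypersurface associated to $g_i$ and one associated to $g_j$ (or to $g_i^{-1}$, for $i\neq j$) are disjoint provided a generalized Margulis invariant of the relevant element is nonzero and has the right sign, and this is an open condition on the cocycle. Proper discontinuity then follows from a ping-pong argument: $g_i$ maps the crooked hypersurface of $g_i^{-1}$ onto that of $g_i$ exchanging the two sides, so any orbit of a compact set inside $D$ exits through the paired boundary pieces, and only finitely many group elements can keep the orbit intersecting a fixed compact set. The main obstacle I anticipate is precisely this disjointness step: in the signature-$(2,1)$ case all computations reduce to elementary plane geometry on null half-planes, whereas in signature $(d+1,d)$ the isotropic Grassmannian is positive-dimensional, and one must quantify how affine translations perturb these piecewise linear hypersurfaces using uniform contraction estimates on the linear parts.

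Once $D$ is established as a fundamental domain, the topology is straightforward: $D$ is a contractible piecewise linear region in $\mathbb{R}^{2d+1}$ whose boundary consists of $2N$ crooked hypersurfaces paired by the generators, each boundary piece being homeomorphic to a $(2d)$-dimensional disk. Gluing $D$ along these $N$ pairs of disks yields a $(2d+1)$-dimensional handlebody of genus $N$, which gives the claimed homeomorphism $\mathbb{R}^{d+1,d}/\Gamma \cong$ handlebody. The openness in the statement then reflects that the disjointness inequalities cutting out admissible cocycles are strict.
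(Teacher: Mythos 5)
Your proposal has two genuine gaps. First, the construction itself: you assume that Drumm's crooked planes admit a direct analogue in signature $(d+1,d)$ --- a piecewise linear separating hypersurface assembled from isotropic ``stems'' and a ``wing'' --- and that disjointness of such hypersurfaces can be governed by the sign of a generalized Margulis invariant. Neither of these is established, and the first is precisely the point where the naive generalization breaks down: in higher signature the pieces of a crooked plane no longer fit together into a codimension-one surface bounding a tractable region, which is why the paper abandons crooked planes altogether and instead takes as walls the boundaries of solid cone-like ``angular'' neighborhoods (the tennis-ball domains $B_{N_{\mathcal{V}_i}}(\pi_\mathbb{S}(V_{i,\subl}^{\halfperp}),\eps_i)$, $B_{N_{\mathcal{V}_i}}(\pi_\mathbb{S}(V_{i,\subg}^{\halfperp}),\eps_i)$) translated by vectors $\pm u_i$ with $u_i+g_i(u_i)=t_i$. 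The nonempty open set of admissible translations is then produced by an explicit choice ($t_i=2e_{i,\sube}$) plus a compactness/perturbation argument, not by a positivity condition on a cocycle, and no Margulis-invariant disjointness criterion is ever formulated or needed. Relatedly, your localization of the parity hypothesis is off: what odd $d$ buys is that the positive wings of two transversal maximal totally isotropic subspaces meet only at the origin (Proposition \ref{wings_disjoint}, via the orientation argument of \cite{AMS02}); for even $d$ two such wings always intersect nontrivially, so no family of $2n$ disjoint domains exists. The phrase ``$2d+1$ being odd'' cannot be the right dichotomy, since $2d+1$ is odd for every $d$.

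Second, and more seriously, you treat proper discontinuity as an immediate consequence of ping-pong. Ping-pong only yields that the images of the candidate fundamental domain are pairwise disjoint (part (i) of Proposition \ref{fundamental_region}); it does not show that the translates of its closure cover all of $\mathbb{R}^{d+1,d}$, and without that covering statement you get neither proper discontinuity on the whole affine space nor the identification of the quotient with a handlebody. This completeness step is the hard core of the proof (as it already was for Drumm in the $(2,1)$ case): one assumes a point $x_0$ missed by every tile, produces from it an infinite reduced word and a nested sequence of domains $\gamma^{[k]}(\mathcal{\tilde{H}}_{i_{k+1}}^{\sigma_{k+1}})$ containing $x_0$, and then, using the uniform contraction and separation estimates for cyclically reduced words (Proposition \ref{product_pseudohyperbolic}) together with the geometric Lemma \ref{angle_control}, shows that the bounding hyperplanes $P_k$ must advance by a uniform amount $\delta_{\min}$ infinitely often, contradicting their boundedness by $x_0$. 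Your proposal contains no substitute for this escape-to-infinity argument, and ``only finitely many group elements can keep the orbit intersecting a fixed compact set'' is exactly the assertion that needs proof rather than a consequence of the setup.
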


To do this, we use mainly two sources of inspiration. The first one is of course \cite{AMS02}, the original work of Abels, Margulis and Soifer. The second one is an article by Charette and Goldman \cite{CG00} presenting Drumm's results.

\subsection{Plan of the paper}
\label{sec:plan}

We start, in section \ref{sec:basic}, by giving some elementary geometrical properties of a space equipped with a form of signature $(d+1, d)$ where $d$ is odd. We describe, in subsection \ref{sec:MTIS'es}, its maximal totally isotropic subspaces; in subsection \ref{sec:frames}, its pseudohyperbolic maps (roughly maps whose space of fixed points has the smallest possible dimension); in subsection \ref{sec:orientation}, an orientation trick (taken from \cite{AMS02}) that allows to extend any two transversal maximal totally isotropic subspaces into half-$d+1$-dimensional spaces that still have zero intersection. Finally, in subsection \ref{sec:metric}, we introduce metrics on various spaces (in particular projective spaces) we need to work with, and we define the strength of contraction of a pseudohyperbolic map.

In the next two sections, we consider subgroups of $SO(d+1,d)$ generated by pseudohyperbolic maps. In section  \ref{sec:group}, we study their action on $\mathbb{P}(\Lambda^d \mathbb{R}^{d+1,d})$. We show that, provided the generators are sufficiently contracting, such a group is free and every element is pseudohyperbolic. We also control the geometry and strength of contraction of all cyclically reduced words on the generators. This result is very similar to Lemma 5.24 from \cite{AMS02}, and we follow closely its proof. (For a more concise proof of a similar result, see also section 6 of \cite{Ben96}.)

In section \ref{sec:tennis_ball}, we study the action of these subgroups directly on $\mathbb{P}(\mathbb{R}^{d+1,d})$. We show that, supposing again that the generators are sufficiently contracting, this action is similar to the action of a Schottky group (which shows again that such a group is free). The way we construct the fundamental domain was partly inspired by Drumm's ideas, but his "crooked planes" do not directly generalize to higher dimensions. Instead, we have used "angular" neighborhoods of some half-spaces (namely of the "positive wings" defined in section \ref{sec:orientation}).

Finally, in section \ref{sec:affine}, we study affine groups $\Gamma$ whose linear parts satisfy the conditions of the previous two sections. We prove the Main Theorem (after stating it more precisely: see Theorem \ref{main_theorem}). Here we closely follow section 4 of \cite{CG00}. First, we describe a set $\mathcal{H}^0$ as the complement to $2n$ "sources" and "sinks" corresponding to the $n$ generators of $\Gamma$. We show (Proposition \ref{fundamental_region}) that under some conditions, $\mathcal{H}^0$ is a fundamental domain for $\Gamma$. Indeed, we see immediately that its images under elements of the group "fit together nicely". To prove that they cover the whole space, by contradiction, we turn our attention to a hypothetical point not covered by any "tile". We include it in a nested sequence of domains, then show (by methods adapted from \cite{CG00}) that these domains must, in a sense, run away to infinity.

\section{Conventions, definitions and basic properties}
\label{sec:basic}

Let $p$ and $q$ be two positive integers. We write $\mathbb{R}^{p, q}$ as shorthand for the space $\mathbb{R}^{p+q}$ equipped with a quadratic form $Q$ of signature $(p, q)$. The group of automorphisms of $\mathbb{R}^{p, q}$ (that is, automorphisms of $\mathbb{R}^{p+q}$ that preserve the quadratic form) is $O(p, q)$. This group has four connected components; we call $SO^+(p,q)$ the connected component of the identity.

We equip $\mathbb{R}^{p, q}$ with some additional structure. We choose a maximal positive definite subspace $S$ of $\mathbb{R}^{p, q}$, and we set $T = S^\perp$ the corresponding maximal negative definite subspace. We may then define orthogonal projections $\pi_S: \mathbb{R}^{p, q} \to S$ and $\pi_T: \mathbb{R}^{p, q} \to T$, and positive definite forms $N_S := \restr{Q}{S}$ and $N_T := -\restr{Q}{T}$, so that
\begin{equation}
\label{eq:form_decomposition}
\forall x \in \mathbb{R}^{p,q},\quad Q(x) = N_S(\pi_S(x)) - N_T(\pi_T(x)).
\end{equation}

\subsection{Maximal totally isotropic subspaces}
\label{sec:MTIS'es}

From now on, the acronym MTIS stands for a maximal totally isotropic subspace. If $V$ is a MTIS of $\mathbb{R}^{p, q}$, then (supposing that $p \geq q$) we have $\dim V = q$, $V \subset V^\perp$ and $\dim V^\perp = p$. We write $\mathscr{L}$ the set of all MTIS'es.

A very useful tool for the study of MTIS'es is the following bijection between $\mathscr{L}$ and the space $O(T, S)$ of orthogonal linear maps from $T$ to $S$ (seen as Euclidean spaces via the forms $N_S$ and $N_T$):
\begin{equation}
\label{eq:MTIS_bijection}
\xymatrix@R=3pt{
  \mathscr{L}                            \ar@{<->}[r]^{\sim} & O(T, S) \\
  V                                      \ar@{|->}[r]        & f_V := \pi_S \circ \left(\restr{\pi_T}{V}\right)^{-1} \\
  V_f := \setsuch{t + f(t)}{t \in T} & f \ar@{|->}[l]
}
\end{equation}

It is straightforward to check that both of these maps are well-defined and reciprocal to each other. Indeed, for any $V \in \mathscr{L}$ and $f \in O(T, S)$, we have:
\begin{itemize}
\item $\restr{\pi_T}{V}$ is bijective. Indeed, since $V \cap T^\perp = V \cap S = \emptyset$, this map is injective, and the spaces $T$ and $V$ have equal dimension.
\item $f_V \in O(T, S)$. Indeed, let $t \in T$; we set $v := (\restr{\pi_T}{V})^{-1}(t)$. Then $v \in V$, and we have $0 = Q(v) = N_S(\pi_S(v)) - N_T(\pi_T(v)) = N_S(f_V(t)) - N_T(t)$.
\item $V_f$ is a MTIS. Indeed, this space has dimension $q$, and for all $t \in T$, we have $Q(t + f(t)) = N_S(f(t)) - N_T(t) = 0$.
\item $V_{f_V} = V$. Indeed, let $v \in V$; then we have $v = \pi_T(v) + \pi_S(v) = \pi_T(v) + f_V(\pi_T(v))$, hence $v \in V_{f_V}$; and we know that $V$ and $V_{f_V}$ have the same dimension.
\item $f_{V_f} = f$. Indeed, let $t \in T$; then we have $f_{V_f}(t) = \pi_S(t + f(t)) = f(t)$. 
\end{itemize}

Here is a first application of this bijection. Later in the paper we shall prove some facts about families of $2n$ pairwise transversal MTIS'es. It would be wise to check that these statements are not vacuous, \ie that such families do indeed exist. This might seem obvious, but it turns out that, while it works for the particular values of $p$ and $q$ we deal with, it is false in general:

\begin{lemma}
\label{pairwise_transversal}
Let $p$, $q$ be two integers, $p \geq q \geq 0$. Then it is possible to find infinitely many pairwise transversal MTIS'es in $\mathbb{R}^{p, q}$, {\bf unless} $p = q$ and $p$ is odd, in which case it is impossible to find more than two of them.
\end{lemma}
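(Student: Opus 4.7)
The plan is to translate the problem via the bijection (\ref{eq:MTIS_bijection}). Two MTIS'es $V_f, V_g$ (with $f, g \in O(T, S)$) are transversal iff the linear map $f - g: T \to S$ is injective: any vector in $V_f \cap V_g$ writes as $t + f(t) = t' + g(t')$ with $t, t' \in T$ and $f(t), g(t') \in S$, so by the direct sum decomposition $\mathbb{R}^{p,q} = T \oplus S$ we get $t = t'$ and $f(t) = g(t)$. The problem thus reduces to producing families $\{f_i\} \subset O(T, S)$ with all pairwise differences $f_i - f_j$ injective.

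For the exceptional case $p = q$ odd, I would identify $T$ and $S$ via a fixed reference isometry so that $O(T, S)$ becomes $O(q)$, and observe that injectivity of $f - g$ translates to $g^{-1}f \in O(q)$ having no eigenvector with eigenvalue $1$. For $q$ odd, every element of $SO(q)$ has $1$ as an eigenvalue: complex eigenvalues come in conjugate pairs of product $1$, real eigenvalues equal $\pm 1$, the number of $-1$'s must be even when the determinant is $+1$, and $q$ odd then forces at least one $+1$. Hence transversality of $V_{f_i}$ and $V_{f_j}$ forces $\det(f_j^{-1} f_i) = -1$. If three pairwise transversal MTIS'es existed, multiplicativity of determinants would yield $\det(f_3^{-1} f_1) = \det(f_3^{-1} f_2)\det(f_2^{-1} f_1) = +1$, contradicting the requirement that it equal $-1$. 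That two MTIS'es are achievable is witnessed by $f, -f$ for any $f \in O(T, S)$, since $(-f)^{-1} f = -\mathrm{id}_T$ has only the eigenvalue $-1$.

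For the remaining cases ($p > q$, or $p = q$ with $p$ even), I would exhibit an explicit one-parameter family. Fix $f_0 \in O(T, S)$ and let $U = f_0(T) \subset S$. Construct a skew-symmetric operator $X \in \mathfrak{so}(S)$ with $\ker X \cap U = 0$: when $p$ is even, take $X$ of full rank $p$, so $\ker X = 0$; when $p$ is odd with $p > q$, pick a unit vector $u \in U^\perp$ (possible since $\dim U^\perp = p - q \geq 1$), take $X$ of full rank on $u^\perp$, and extend by $Xu = 0$, giving $\ker X = \mathbb{R}u \subset U^\perp$. Setting $f_\theta := \exp(\theta X) \circ f_0 \in O(T, S)$, the condition $(f_\theta - f_{\theta'})(t) = 0$ amounts to $\exp((\theta - \theta')X)$ fixing $f_0(t) \in U$; outside the countable "resonance set" of $\alpha$ that are integer multiples of $2\pi / \lambda_k$ for some nonzero imaginary eigenvalue $i\lambda_k$ of $X$, the fixed-point subspace of $\exp(\alpha X)$ equals $\ker X$ and is disjoint from $U$. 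Choosing a $\mathbb{Q}$-linearly independent sequence $\theta_1, \theta_2, \ldots$ ensures all pairwise differences avoid this countable set, yielding infinitely many pairwise transversal MTIS'es.

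The main obstacle I foresee is the parity-dependent construction of the skew-symmetric operator $X$, namely showing that its kernel avoids $U$ in all non-exceptional configurations, together with the verification that "resonance-free" parameters form a cocountable set so an infinite family can be extracted. The odd-case argument is algebraically clean but hinges on the structural fact about eigenvalues of orthogonal transformations in odd dimension, which I would prove inline.
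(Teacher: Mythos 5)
Your reduction via the bijection \eqref{eq:MTIS_bijection} and your treatment of the exceptional case $p=q$ odd are essentially the paper's proof: transversality of $V_f$ and $V_g$ is equivalent to injectivity of $f-g$, two MTIS'es are obtained from $f$ and $-f$, and a third is excluded because an element of $SO(q)$ with $q$ odd fixes a nonzero vector (the paper argues that among any three maps two have equal determinant; you phrase the same parity obstruction multiplicatively via $\det(f_3^{-1}f_1)=\det(f_3^{-1}f_2)\det(f_2^{-1}f_1)$). In the generic case your route differs in execution: the paper picks an even integer $r$ with $q\le r\le p$, an $r$-dimensional subspace $E\supset f_0(\mathbb{R}^q)$, and the group of simultaneous rotations by a common angle on $E$, whose nontrivial elements are fixed-point free, so distinct angles immediately give pairwise transversal MTIS'es with no further analysis; you instead exponentiate a skew-symmetric $X\in\mathfrak{so}(S)$ whose kernel avoids $U=f_0(T)$, which is correct in principle (and your parity-dependent construction of $X$ is fine) but forces you into a resonance argument.

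That last step contains the one genuine flaw: $\mathbb{Q}$-linear independence of the sequence $(\theta_i)$ does \emph{not} ensure that the differences $\theta_i-\theta_j$ avoid the resonance set. For instance, if some $\lambda_k=2\pi$, the resonance set contains $\mathbb{Z}\setminus\{0\}$, and $\theta_1=\sqrt2$, $\theta_2=1+\sqrt2$ are $\mathbb{Q}$-linearly independent while their difference is $1$. You would need independence of the $\theta_i$ jointly with the numbers $2\pi/\lambda_k$; more simply, choose the $\theta_i$ inductively (at each step the forbidden set $\bigcup_{j<i}(\theta_j+R)$, with $R$ the resonance set, is countable, so an admissible $\theta_i$ exists), or take $X$ with a single frequency (all $\lambda_k$ equal, which your construction allows), in which case any distinct parameters in $(0,2\pi/\lambda)$ work --- this last variant is in effect the paper's construction. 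With any of these repairs your argument is complete.
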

\begin{proof}
Let $V_1$ and $V_2$ be two MTIS'es, and $f_i := f_{V_i}$ their images under the bijection \eqref{eq:MTIS_bijection}. We claim that $V_1$ and $V_2$ are transversal iff $f_1 - f_2$ is injective. Indeed, we have
\[ x \in V_1 \cap V_2 \iff \exists t \in T,\quad x = t + f_1(t) = t + f_2(t), \]
hence $V_1 \cap V_2 = 0 \iff \ker (f_1 - f_2) = 0$.

The question now becomes: how many orthogonal maps from $T$ to $S$ --- or, equivalently, from $\mathbb{R}^q$ to $\mathbb{R}^p$ --- can we find such that their differences are pairwise injective, \ie such that the images of any nonzero vector under these maps are pairwise different?

Suppose first that we may find an even integer $r$ such that $q \leq r \leq p$. Let $f_0: \mathbb{R}^q \to \mathbb{R}^p$ be any orthogonal (hence injective) map, and let $E$ be any $r$-dimensional linear space such that $f_0(\mathbb{R}^q) \subset E \subset \mathbb{R}^p$. Then we may find in $O(E)$ an infinite subgroup whose nontrivial elements have no fixed points: for example, the group $G$ formed by matrices
\[\begin{pmatrix}
R_\theta & 0      & 0 \\
0        & \ddots & 0 \\
0        & 0      & R_\theta
\end{pmatrix}\]
(where $R_\theta = \begin{pmatrix}\cos \theta & - \sin \theta \\ \sin \theta & \cos \theta\end{pmatrix}$), with $\theta$ running in $\mathbb{R}$. Now consider the set of all maps $g \circ f_0$ with $g \in G$. Let $x \in \mathbb{R}^q \setminus \{0\}$: then $f_0(x) \neq 0$, and the images of $f_0(x)$ under the elements of $G$ are pairwise different. It follows that these maps have indeed pairwise injective differences.

Otherwise, we have $p = q$ and $p$ is odd. The identity and the map $x \mapsto -x$ are two maps of $O(p)$ with injective difference. Now take any three maps in $O(p)$. Then at least two of them, let us call them $f_1$ and $f_2$, have the same determinant: in other terms $f_1 \circ f_2^{-1} \in SO(p)$. But for odd $p$, any map of $SO(p)$ has a fixed point. It follows that $f_1 - f_2$ is not injective.
\qed
\end{proof}

\subsection{Pseudohyperbolic maps and frames}
\label{sec:frames}

From now on, we fix a positive integer $d$ and we set $(p, q) = (d+1, d)$. Take any map $g \in GL(\mathbb{R}^{d+1, d})$. Then we may decompose $\mathbb{R}^{d+1, d}$ into a direct sum of three spaces $\mathbb{R}^{d+1, d} = V_{\subl}(g) \oplus V_{\sube}(g) \oplus V_{\subg}(g)$ stable by $g$ and such that all eigenvalues $\lambda$ of $\restr{g}{V_{\subl}(g)}$ (resp. $V_{\sube}$, $V_{\subg}$) satisfy $|\lambda| < 1$ (resp. $=1$, $>1$).
\begin{definition}
We shall say that $g$ is \emph{pseudohyperbolic} if $g \in O(d+1, d)$, $\dim V_{\sube}(g) = 1$ and the eigenvalue of $g$ lying in $V_{\sube}(g)$ is $1$ (not $-1$). (As we will soon show, all pseudohyperbolic maps actually lie in $SO(d+1, d)$). In this case, we define the \emph{frame} of $g$ to be the ordered pair $\mathcal{V}(g) := (V_{\subl}(g), V_{\subg}(g))$, and the \emph{dynamical part} of $g$ (as opposed to the frame, which is the "geometrical part") to be the map $g_{\subl} := \restr{g}{V_{\subl}(g)}$.
\end{definition}
Then a pseudohyperbolic map is uniquely defined by its frame and dynamical part. However, these must satisfy some conditions. To state them, we shall need the following notation: for any linear map $g$, we denote by $\rho(g)$ its \emph{spectral radius}, that is, the largest modulus of any eigenvalue of $g$.
\begin{proposition}
\label{frame_and_dynamical_part}
Pseudohyperbolic maps are in one-to-one correspondence (via the previous definition) with (ordered) triples $(V_{\subl}, V_{\subg}, g_{\subl})$ such that $V_{\subl}$ and $V_{\subg}$ are two transversal MTIS'es and $g_{\subl}$ is an automorphism of $V_{\subl}$ with $\rho(g_{\subl}) < 1$.
\end{proposition}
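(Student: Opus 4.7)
The strategy is to prove both directions of the bijection: in the forward direction, read off the triple from $g$'s spectral decomposition; in the reverse direction, reconstruct $g$ uniquely using the orthogonality constraint. I begin with the forward direction. Since $g \in O(d+1, d)$, the spectrum of $g$ is invariant under $\lambda \mapsto 1/\lambda$ (because $g$ is conjugate to $g^{-T}$ via the Gram matrix of $Q$), so $\dim V_{\subl}(g) = \dim V_{\subg}(g)$; together with $\dim V_{\sube}(g) = 1$, this forces both dimensions to equal $d$. For $v, w \in V_{\subl}(g)$, the identity $Q(v,w) = Q(g^n v, g^n w)$ holds for every $n$; even with Jordan blocks present, $\|g^n|_{V_{\subl}}\|$ is bounded by $C n^k r^n$ with $r = \rho(g_{\subl}) < 1$, so the right-hand side tends to $0$ and $Q(v,w) = 0$. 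The same argument applied to $g^{-1}$ shows that $V_{\subg}(g)$ is isotropic. As both subspaces have dimension $d = \min(p,q)$, they are MTIS'es, and transversality is immediate from the direct sum decomposition. The bound $\rho(g_{\subl}) < 1$ is by definition.

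For the reverse direction, let $(V_{\subl}, V_{\subg}, g_{\subl})$ be given. The key observation is that, since $V_{\subl}$ and $V_{\subg}$ are transversal MTIS'es, $Q$ restricts on $V_{\subl} \oplus V_{\subg}$ to a nondegenerate pairing between the two summands. Setting $W := (V_{\subl} \oplus V_{\subg})^{\perp}$, one gets a direct sum $\mathbb{R}^{d+1, d} = V_{\subl} \oplus W \oplus V_{\subg}$ with $\dim W = 1$ and $W$ nondegenerate (an isotropic $W$ would lie in $(V_{\subl} \oplus V_{\subg})^{\perp}$ while being disjoint from $V_{\subl} \oplus V_{\subg}$, so the form on $V_{\subl} \oplus V_{\subg} \oplus W$ would degenerate, contradicting the signature count). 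I then define $g$ to be $g_{\subl}$ on $V_{\subl}$, the identity on $W$, and $(g_{\subl}^{\ast})^{-1}$ on $V_{\subg}$, where the dual is taken with respect to the $Q$-pairing $V_{\subl} \times V_{\subg} \to \mathbb{R}$. This last choice is forced by the requirement $Q(gv, gw) = Q(v, w)$ for $v \in V_{\subl}, w \in V_{\subg}$, and all other orthogonality relations hold automatically by the isotropy of $V_{\subl}$ and $V_{\subg}$ together with $g|_W = \Id$. The eigenvalues of $(g_{\subl}^{\ast})^{-1}$ are the reciprocals of those of $g_{\subl}$, hence of modulus $> 1$, so by dimension count $V_{\subl}(g) = V_{\subl}$, $V_{\sube}(g) = W$, $V_{\subg}(g) = V_{\subg}$, and $g$ is indeed pseudohyperbolic with the prescribed data. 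The quick computation $\det g = \det g_{\subl} \cdot (\det g_{\subl})^{-1} \cdot 1 = 1$ yields the parenthetical claim $g \in SO(d+1,d)$.

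Uniqueness closes the loop. Any pseudohyperbolic $g'$ with frame $(V_{\subl}, V_{\subg})$ and dynamical part $g_{\subl}$ must satisfy $V_{\sube}(g') \perp V_{\subl} \oplus V_{\subg}$ (by the same eigenvalue-product argument: pairs $\lambda \cdot 1$ with $|\lambda| \neq 1$ cannot equal $1$), hence $V_{\sube}(g') = W$; then $g'|_W = \Id$ by the pseudohyperbolic definition, and $g'|_{V_{\subg}}$ is forced by orthogonality and the prescribed action on $V_{\subl}$, so $g' = g$. The main subtlety I foresee is the passage from pointwise eigenvalue arguments to generalized eigenspaces, but the polynomial-times-exponential growth bounds absorb any Jordan factors; after that, the proof is essentially careful bookkeeping with the block structure imposed by orthogonality on a pair of dual MTIS'es.
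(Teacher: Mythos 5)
Your proof is correct and follows essentially the same route as the paper: isotropy of $V_{\subl}(g)$ and $V_{\subg}(g)$ via the limit argument $Q(v,w)=Q(g^nv,g^nw)\to 0$, and reconstruction of $g$ as $g_{\subl}$ on $V_{\subl}$, the identity on the line $(V_{\subl}\oplus V_{\subg})^{\perp}$, and the inverse adjoint of $g_{\subl}$ with respect to the $Q$-pairing on $V_{\subg}$, together with the forced-uniqueness argument. The only cosmetic difference is that you get $\dim V_{\subl}(g)=\dim V_{\subg}(g)=d$ from the spectral symmetry $\lambda\mapsto\lambda^{-1}$ of maps in $O(d+1,d)$, whereas the paper deduces maximality from the bound $\dim\leq d$ on isotropic subspaces combined with the total dimension $2d+1$.
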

\begin{proof}
First, let us check that the frame and dynamical part of any pseudohyperbolic map $g$ do satisfy the required conditions. Indeed:
\begin{itemize}
\item The fact that $g_{\subl}$ is an automorphism of $V_{\subl}$ and the limitation on its spectral radius follow immediately from the definition of $V_{\subl}$.
\item Also by definition, $V_{\subl}(g) \cap V_{\subg}(g) = 0$.
\item Let $x_{\subl} \in V_{\subl}(g)$. Then we have
\[Q(x_{\subl})
= \lim_{n \to +\infty} Q(g^n(x_{\subl}))
= Q \left( \lim_{n \to +\infty} g_{\subl}^n(x_{\subl}) \right)
= 0,\]
since $\rho(g_{\subl}) < 1$. This shows that $V_{\subl}$ is a totally isotropic subspace.
\item Similarly, by using $g^{-1}$ instead of $g$, we can show that $V_{\subg}$ is totally isotropic. Now since $\mathbb{R}^{d+1,d} = V_{\subl} \oplus V_{\sube} \oplus V_{\subg}$, we have
\[2d+1 = \dim V_{\subl} + \dim V_{\sube} + \dim V_{\subg} \leq d + 1 + d = 2d+1,\]
hence the inequality must be an equality, that is, $V_\subl$ and $V_{\subg}$ have maximal dimension.
\end{itemize}

Now let $V_{\subl}$ and $V_{\subg}$ be any pair of transversal MTIS'es and $g_{\subl}$ any automorphism of $V_{\subl}$ with $\rho(g_{\subl}) < 1$. Let us show that there is at most one pseudohyperbolic map with frame $(V_{\subl}, V_{\subg})$ and dynamical part $g_{\subl}$. Indeed, let $g$ be such a map. Then we may calculate $V_{\subl}(g)$, $V_{\sube}(g)$, $V_{\subg}(g)$ and the restrictions of $g$ onto these subspaces, which determines $g$ uniquely. Indeed:

\begin{itemize}
\item By definition, $V_{\subl}(g) = V_{\subl}$ and $V_{\subg}(g) = V_{\subg}$.
\item Let $x_{\subl} \in V_{\subl}$, $x_{\sube} \in V_{\sube}(g)$. Then we have (denoting by $\langle \bullet, \bullet \rangle$ the bilinear form corresponding to the quadratic form $Q$):
\[\langle x_{\subl}, x_{\sube} \rangle
= \lim_{n \to +\infty} \langle g^n(x_{\subl}), g^n(x_{\sube}) \rangle
= \left\langle \lim_{n \to +\infty} g_{\subl}^n(x_{\subl}),\; x_{\sube} \right\rangle
= 0.\]
This shows that $V_{\sube}(g) \perp V_{\subl}$. In the same way, we get $V_{\sube}(g) \perp V_{\subg}$; hence $V_{\sube}(g) \subset V_{\subl}^\perp \cap V_{\subg}^\perp$. But clearly, the right-hand side is a space of dimension at most 1; hence $V_{\sube}(g) = V_{\subl}^\perp \cap V_{\subg}^\perp$.
\item By definition, $\restr{g}{V_{\subl}} = g_{\subl}$ and $\restr{g}{V_{\sube}}$ is the identity.
\item For $x \in \mathbb{R}^{d+1,d}$, we define $x_{\subl}, x_{\sube}, x_{\subg}$ to be the components of $x$ lying in $V_{\subl}(g)$, $V_{\sube}(g)$, $V_{\subg}(g)$ (so that $x = x_{\subl} + x_{\sube} + x_{\subg}$). For every $x$, since $Q(x_{\subl}) = Q(x_{\subg}) = \langle x_{\subl}, x_{\sube} \rangle = \langle x_{\subg}, x_{\sube} \rangle = 0$, we have
\[Q(x) = 2\langle x_{\subl}, x_{\subg} \rangle + Q(x_{\sube}).\]
Now if we apply $g$, we get:
\[Q(g(x)) = 2\langle g_{\subl}(x_{\subl}), g_{\subg}(x_{\subg}) \rangle + Q(x_{\sube}),\]
hence for every $x_{\subl} \in V_{\subl}$ and $x_{\subg} \in V_{\subg}$, we have $\langle x_{\subl}, x_{\subg} \rangle = \langle g_{\subl}(x_{\subl}), g_{\subg}(x_{\subg}) \rangle$. It follows that $g_{\subg}$ is adjoint to $g_{\subl}^{-1}$. More rigorously, we have
\begin{equation}
\label{eq:duality}
g_{\subg} = \Phi_\mathcal{V}^{-1} \circ (g_{\subl}^{-1})^* \circ \Phi_\mathcal{V},
\end{equation}
where $\Phi_\mathcal{V}: V_{\subg} \to V_{\subl}^*$ is the appropriate restriction and factoring of the canonical isomorphism $\Phi_Q: \mathbb{R}^{d+1,d} \to (\mathbb{R}^{d+1,d})^*$ defined by $\Phi_Q(x) \cdot y = \langle x, y \rangle$. This determines $g_{\subg}$ uniquely.
\end{itemize}

Finally, let $V_{\sube} := V_{\subl}^\perp \cap V_{\subg}^\perp$. Then $\dim V_{\sube} = 1$ and $\mathbb{R}^{d+1,d} = V_{\subl} \oplus V_{\sube} \oplus V_{\subg}$. Consider the map $g := g_{\subl} \oplus \Id_{V_{\sube}} \oplus g_{\subg}$, with $g_{\subg}$ defined by \eqref{eq:duality}. Then it is straightforward to check that $g$ is a pseudohyperbolic map with frame $(V_{\subl}, V_{\subg})$ and dynamical part $g_{\subl}$. (Note that it follows from \eqref{eq:duality} that the eigenvalues of $g_{\subg}$ are reciprocal to the eigenvalues of $g_{\subl}$).
\qed
\end{proof}

Incidentally, we can now prove --- as announced earlier --- that all pseudohyperbolic maps $g$ lie in $SO(d+1, d)$. Indeed, for all such $g$, we have $\det g_{\subg} = (\det g_{\subl})^{-1}$, hence $\det g = (\det g_{\subl})(\det \Id)(\det g_{\subg}) = 1$.

\begin{definition}
We define a \emph{frame} in general to be an ordered pair of transversal MTIS'es. If $\mathcal{V}$ is a frame, we write:
\begin{itemize}
\item $V_{\subl}$ its first component and $V_{\subg}$ its second component;
\item $V_{\sube}$ the line $V_{\subl}^\perp \cap V_{\subg}^\perp$;
\item $V_{\suble} := V_{\subl}^\perp = V_{\subl} \oplus V_{\sube}$ and $V_{\subge} := V_{\subg}^\perp = V_{\subg} \oplus V_{\sube}$.
\end{itemize}
\end{definition}

\subsection{Orientation}
\label{sec:orientation}

\begin{proposition}
\label{orientation}
It is possible to choose an orientation on all the MTIS'es $V$ (resp. on their orthogonal subspaces $V^\perp$), such that every $f \in SO^+(d+1,d)$ induces a direct isomorphism from $V$ to $f(V)$ (resp. from $V^\perp$ to $f(V^\perp) = f(V)^\perp$).
\end{proposition}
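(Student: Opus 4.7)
The plan is to fix a reference MTIS $V_0$, choose orientations on $V_0$ and $V_0^\perp$ arbitrarily, and propagate them to every other MTIS and its orthogonal through the action of $SO^+(d+1,d)$: for any $V = g V_0$ with $g \in SO^+(d+1,d)$, orient $V$ and $V^\perp$ as the pushforwards by $g$ of the chosen orientations on $V_0$ and $V_0^\perp$. For this recipe to yield a coherent $SO^+(d+1,d)$-invariant orientation, two things must be checked: (a) $SO^+(d+1,d)$ acts transitively on $\mathscr{L}$; and (b) the stabilizer $P := \mathrm{Stab}_{SO^+(d+1,d)}(V_0)$ acts on both $V_0$ and $V_0^\perp$ by orientation-preserving linear maps, so that the orientation of $V$ does not depend on the chosen $g$. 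The $SO^+(d+1,d)$-equivariance of the result is then tautological: for any $f \in SO^+(d+1,d)$ and $V = gV_0$, the orientation of $fV = (fg)V_0$ is, by construction, $f_*(g_*(\text{orientation of }V_0)) = f_*(\text{orientation of }V)$.

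Point (a) is quick: the bijection \eqref{eq:MTIS_bijection} identifies $\mathscr{L}$ with $O(T, S)$, a Stiefel manifold of isometric embeddings $T \hookrightarrow S$, which is connected since $\dim S > \dim T$; on the other hand the $SO^+(d+1,d)$-orbit of $V_0$ is open in $\mathscr{L}$ by dimension counting, and hence fills it.

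Point (b) is the main work. In a Witt basis $(v_1, \ldots, v_d, e, w_1, \ldots, w_d)$ adapted to $V_0$ (with $V_0 = \mathrm{span}(v_i)$, a transverse MTIS $W_0 = \mathrm{span}(w_j)$ given by Lemma \ref{pairwise_transversal}, $\langle v_i, w_j\rangle = \delta_{ij}$, and $e$ a positive-norm vector orthogonal to $V_0 \oplus W_0$), any $g \in \mathrm{Stab}_{O(d+1,d)}(V_0)$ is upper block-triangular with diagonal blocks $A = g|_{V_0}$, a scalar $\epsilon \in \{\pm 1\}$ on $\langle e \rangle$, and $A^{-T}$ on $W_0$; in particular $\det g = \epsilon$, so $\mathrm{Stab}_{SO(d+1,d)}(V_0)$ is cut out by $\epsilon = +1$ and has exactly two connected components, indexed by $\mathrm{sgn}(\det A)$. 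The crucial claim is that only the component $\det A > 0$ meets $SO^+(d+1,d)$; this is checked on a single representative, e.g.\ $A = \mathrm{diag}(-1,1,\ldots,1)$ with trivial off-diagonal entries, by computing that its restriction to the maximal positive-definite subspace spanned by $e$ and the $(v_i+w_i)/\sqrt{2}$ has determinant $-1$. Hence $P$ is precisely the component $\{\det A > 0\}$, and it acts orientation-preservingly on $V_0$ and on $V_0^\perp = V_0 \oplus \langle e \rangle$ (the latter with Jacobian $\epsilon\det A = \det A > 0$).

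The main obstacle is this component analysis in (b): identifying precisely which connected components of $\mathrm{Stab}_{SO(d+1,d)}(V_0)$ lie in the identity component of the full group. This is where the specific signature $(d+1,d)$ enters: the $1$-dimensional quotient $V_0^\perp/V_0$ is positive-definite, which is what forces $\epsilon = +1$ inside $SO$ and lets the sign of $\det A$ cleanly separate the two components.
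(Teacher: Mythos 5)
Your argument is correct, but it follows a genuinely different route from the paper's. The paper makes an explicit global choice: after fixing orientations on $S$ and $T$, it orients each $V^\perp$ (resp.\ $V$) by declaring $\pi_S|_{V^\perp}\colon V^\perp \to S$ (resp.\ $\pi_T|_V\colon V \to T$) to be direct, and then obtains invariance by noting that the determinant of $S \to V^\perp \xrightarrow{f} f(V^\perp) \to S$ is a continuous, nonvanishing function of $f$ on the connected group $SO^+(d+1,d)$, hence of constant (positive) sign; no transitivity statement and no stabilizer analysis are needed. You instead propagate an orientation from a base point $V_0$, which obliges you to prove (a) transitivity of $SO^+(d+1,d)$ on $\mathscr{L}$ and (b) that $\mathrm{Stab}_{SO^+(d+1,d)}(V_0)$ is exactly the $\det A>0$ part of the parabolic; your Witt-basis block computation and the test element with $A=\mathrm{diag}(-1,1,\dots,1)$ do settle (b), and you correctly locate where the signature $(d+1,d)$ enters. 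Two steps are stated tersely but are easily completed: in (a), ``open orbit in a connected space, hence fills it'' needs the extra remark that \emph{every} $SO^+$-orbit is open (the stabilizer has the same dimension at every MTIS, since every MTIS extends to a Witt basis), so each orbit is also closed and connectedness concludes; and your criterion that an element acting with determinant $-1$ on an invariant maximal positive-definite subspace cannot lie in $SO^+(d+1,d)$ rests on the standard identification of the components of $O(p,q)$ with those of its maximal compact subgroup $O(p)\times O(q)$. What your route buys is extra structure ($\mathscr{L}\cong SO^+(d+1,d)/P$ with $P$ connected, plus a precise description of the stabilizer); what the paper's route buys is brevity and elementarity --- its use of $\pi_S$ amounts to exhibiting a global continuous choice of orientations directly, rather than deducing its existence from the group action.
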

\begin{proof}
We first treat the case of the spaces orthogonal to the MTIS'es. We fix some orientations on $S$ and $T$ (recall that these are two mutually orthogonal maximal definite spaces, one positive and one negative). Then, for any MTIS $V$, $\pi_S$ induces an isomorphism from $V^\perp$ to $S$. Indeed, both spaces have dimension $d+1$, and
\[
\ker \restr{\pi_S}{V^\perp} = V^\perp \cap \ker \pi_S = V^\perp \cap T = \{0\},
\]
since $V^\perp$ is a positive and $T$ a negative definite subspace. We then choose the orientation of $V^\perp$ that makes $\restr{\pi_S}{V^\perp}$ a direct isomorphism.

Now consider the map from $S$ to itself given by the composition of
\[
S \overset{\pi_S^{-1}}{\longlongrightarrow} V^\perp
  \overset{f}{\longlongrightarrow}          f(V^\perp)
  \overset{\pi_S}{\longlongrightarrow}      S.
\]
It is easy to see that its determinant depends continuously on $f$ and never vanishes for $f \in SO^+(d+1,d)$. Since $SO^+(d+1,d)$ is connected, the determinant must have constant sign, hence the result.

Replacing $S$ by $T$, the same argument adapts for the MTIS'es themselves.
\qed
\end{proof}

From now on, let us fix such a family of orientations.

\begin{definition}
The \emph{positive wing} supported by a MTIS $V$ is the half-space
\[V^{\halfperp} := \setsuch{v + xe}{v \in V, \; x \geq 0},\]
where $e \in V^\perp$ is any vector such that whenever $(e_1, \ldots, e_d)$ is a direct basis of $V$, $(e_1, \ldots, e_d, e)$ is a direct basis of $V^\perp$. (The symbol $\halfperp$ should be read as "half-perp"; it is intended to represent half the symbol $\perp$.)
\end{definition}
\begin{proposition}
\label{wings_disjoint}
If $d$ is odd, the positive wings supported by any two transversal MTIS'es $V_{\subl}$ and $V_{\subg}$ have a trivial intersection:
\[V_{\subl}^{\halfperp} \cap V_{\subg}^{\halfperp} = \{0\}.\]
\end{proposition}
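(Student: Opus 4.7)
The plan is to localise the intersection $V_{\subl}^{\halfperp} \cap V_{\subg}^{\halfperp}$ into the one-dimensional space $V_{\sube} = V_{\subl}^\perp \cap V_{\subg}^\perp$ and then reduce the problem to a sign comparison that turns out to fail exactly when $d$ is even. Since $V_{\subl}^{\halfperp} \subset V_{\subl}^\perp$ and $V_{\subg}^{\halfperp} \subset V_{\subg}^\perp$, any point of the intersection lies in $V_{\sube}$. Fix a nonzero $e \in V_{\sube}$ and use $V_{\subl}^\perp = V_{\subl} \oplus V_{\sube}$ to write $e_{\subl} = a_{\subl} + b_{\subl} e$ with $a_{\subl} \in V_{\subl}$ and $b_{\subl} \neq 0$; similarly $e_{\subg} = a_{\subg} + b_{\subg} e$. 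Unpacking the definition of the positive wing, $V_{\subl}^{\halfperp} \cap V_{\sube} = \mathbb{R}_{\geq 0}(b_{\subl} e)$, and likewise for $V_{\subg}$. Thus the wings meet only at $0$ iff $b_{\subl}$ and $b_{\subg}$ have opposite signs; this is the claim to prove.

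To compute these signs, I would pass through the MTIS bijection of Section~\ref{sec:MTIS'es}: write $V_{\subl} = V_{f_1}$ and $V_{\subg} = V_{f_2}$ with $f_i \in O(T, S)$, and let $u_i$ be the unit vector in $f_i(T)^\perp \subset S$ oriented so that $(f_i(t_1), \ldots, f_i(t_d), u_i)$ is a direct orthonormal basis of $S$, where $(t_j)$ is a direct orthonormal basis of $T$. Decomposing $e = e_T + e_S$ under $T \oplus S$, the conditions $e \in V_{\subl}^\perp$ and $e \in V_{\subg}^\perp$ force $e_S = f_1(e_T) + \mu_1 u_1 = f_2(e_T) + \mu_2 u_2$ for scalars $\mu_i$, and a routine determinant computation via $\pi_S$ identifies $\mathrm{sign}(b_i) = \mathrm{sign}(\mu_i)$. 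Comparing norms in $S$ yields $\mu_1^2 = \mu_2^2 = Q(e) > 0$, so $\mu_1 = \pm \mu_2$, and the question becomes which sign occurs.

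The heart of the proof is to rule out $\mu_1 = \mu_2$ when $d$ is odd. Define $\hat{\phi} \in SO(d+1) = SO(S)$ by $\hat{\phi}(f_1(t)) = f_2(t)$ for $t \in T$ and $\hat{\phi}(u_1) = u_2$; this lies in $SO(d+1)$ because it sends one direct orthonormal basis of $S$ to another. If $\mu_1 = \mu_2$ then $\hat{\phi}(e_S) = e_S$. Transversality is equivalent to injectivity of $f_1 - f_2$, which forces the fixed-point subspace $\ker(\hat{\phi} - \mathrm{Id})$ to meet $f_1(T)$ trivially; hence it has dimension at most $\dim S - \dim f_1(T) = 1$, and since it contains $e_S \neq 0$ it equals the line $\mathbb{R} e_S$. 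But in $SO(d+1)$ with $d+1$ even, an element whose $(+1)$-eigenspace has dimension $1$ has its $(-1)$-eigenspace of dimension $d - 2c$ (where $2c$ is the total dimension contributed by complex conjugate eigenvalue pairs); for odd $d$ this is an odd integer, so $\det \hat{\phi} = (-1)^{d-2c} = -1$, contradicting $\hat{\phi} \in SO$. Hence $\mu_1 = -\mu_2$, the signs of $b_{\subl}$ and $b_{\subg}$ are opposite, and the intersection is trivial. The main obstacle is exactly this parity argument: it is the only place where the oddness of $d$ is used, and it is precisely what breaks down for even $d$, where the two wings genuinely overlap.
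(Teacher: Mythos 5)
Your argument is correct, but it follows a genuinely different route from the paper's. The paper proves the sign statement by transporting one wing onto the other inside $SO^+(d+1,d)$: it builds dual bases of $V_{\subl}$ and $V_{\subg}$, exhibits the explicit swap map $h_\pi$ (quoting \cite{AMS02} for the fact that $h_\pi \in SO^+(d+1,d)$), and then invokes the equivariance of the orientations (Proposition \ref{orientation}) to get $V_{\subl}^{\halfperp} = V_{\subl} + (-1)^d\mathbb{R}^{\geq 0}e_{\sube}$ versus $V_{\subg}^{\halfperp} = V_{\subg} + \mathbb{R}^{\geq 0}e_{\sube}$, so the two half-lines of $V_{\sube}$ are opposite exactly when $d$ is odd. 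You instead work in the Euclidean model of subsection \ref{sec:MTIS'es}: writing $V_{\subl} = V_{f_1}$, $V_{\subg} = V_{f_2}$ with $f_i \in O(T,S)$, each wing direction becomes the oriented unit normal $u_i$ of $f_i(T)$ in $S$, and the sign comparison is settled inside the compact group $SO(S) \cong SO(d+1)$: if the signs agreed, the map sending $\bigl(f_1(t_1),\ldots,f_1(t_d),u_1\bigr)$ to $\bigl(f_2(t_1),\ldots,f_2(t_d),u_2\bigr)$ would be an element of $SO(d+1)$ whose fixed space has dimension exactly $1$ (it meets $f_1(T)$ trivially because $f_1-f_2$ is injective, i.e.\ by transversality, and it contains $e_S \neq 0$), which is impossible for $d+1$ even since the $(+1)$-eigenspace of an element of $SO(n)$ has dimension congruent to $n$ modulo $2$. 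This is the same parity mechanism that the paper uses in Lemma \ref{pairwise_transversal}, and your version buys independence from the \cite{AMS02} computation about $h_\pi$ while making the role of oddness completely transparent; the steps you label routine do check out ($\sgn(b_i) = \sgn(\mu_i)$, $\mu_1^2 = \mu_2^2 = Q(e) > 0$, hence $\mu_i \neq 0$). One point worth making explicit: your identification of the wing direction with $+u_i$ uses the particular orientation family constructed in the proof of Proposition \ref{orientation} (orientations induced by $\pi_T$ on the MTIS'es and by $\pi_S$ on their orthogonals), whereas the statement refers to an arbitrary family as in that proposition; since any two such families differ only by global sign flips (the orientation at one MTIS propagates to all by $SO^+$-equivariance and transitivity), the trivial-intersection conclusion is unaffected, but a sentence saying so would close the loop.
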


\begin{proof}
Let $\mathcal{B}_{\subg} = (e_{\subg}^1, \ldots, e_{\subg}^d)$ be any direct basis of $V_{\subg}$. We set
\[\mathcal{B}_{\subl} = (e_{\subl}^1, \ldots, e_{\subl}^d) := \Phi_\mathcal{V}^{-1}(-\mathcal{B}_{\subg}^*)\]
to be the basis of $V_{\subg}$ dual to the basis $-\mathcal{B}_{\subg} = (-e_{\subg}^1, \ldots, -e_{\subg}^d)$ (see the proof of Proposition \ref{frame_and_dynamical_part} for the definition of $\Phi_\mathcal{V}$). Let also $e_{\sube}$ be the vector of unit norm lying in $V_{\sube}$ such that $(\mathcal{B}_{\subg}, e_{\sube})$ is a direct basis of $V_{\subg}^\perp = V_{\subg} \oplus V_{\sube}$. We now define a basis of $\mathbb{R}^{d+1,d}$ by joining together these bases of $V_{\subl}$, $V_{\subg}$ and $V_{\sube}$:
\[\mathcal{B} := (e_{\subl}^1, \ldots, e_{\subl}^d, e_{\subg}^1, \ldots, e_{\subg}^d, e_{\sube}).\]
In this basis, the quadratic form $Q$ is then given by the matrix
\[\begin{pmatrix}
0    & -I_d & 0 \\
-I_d & 0    & 0 \\
0    & 0    & 1
\end{pmatrix}.\]
Now consider the automorphism $f$ given, in basis $\mathcal{B}$, by the matrix
\[\begin{pmatrix}
0   & I_d & 0      \\
I_d & 0   & 0      \\
0   & 0   & (-1)^d
\end{pmatrix}.\]
It is easy to show that $f \in SO^+(d+1,d)$ (for details, see \cite{AMS02}, proof of Lemma 3.1 --- they call this map $h_\pi$). But $f$ maps $\mathcal{B}_{\subg}$ onto $\mathcal{B}_{\subl}$ and $(\mathcal{B}_{\subg}, e_{\sube})$ onto $(\mathcal{B}_{\subl}, (-1)^de_{\sube})$. Hence by Proposition \ref{orientation}, the latter are direct bases of $V_{\subg}$ and $V_{\subg}^\perp$. This implies that
\[\begin{cases}
V_{\subl}^{\halfperp} = V_{\subl} + (-1)^d\mathbb{R}^{\geq 0}e_{\sube} \\
V_{\subg}^{\halfperp} = V_{\subg} + \mathbb{R}^{\geq 0}e_{\sube}.
\end{cases}\]
Hence
\[V_{\subl}^{\halfperp} \cap V_{\subg}^{\halfperp} = \left((-1)^d\mathbb{R}^{\geq 0} \cap \mathbb{R}^{\geq 0}\right)e_{\sube};\]
since $d$ is odd, the conclusion follows.
\qed
\end{proof}

\begin{remark}
Suppose now that $d$ is even. Then the same argument shows that two positive wings \emph{always} have a nontrivial intersection. Thus with our methods, there is no hope to construct a non-abelian free properly discontinuous subgroup in $SO(d+1, d) \rtimes \mathbb{R}^{2d+1}$ for even $d$, since a crucial point is the existence of $2n$ pairwise disjoint wings (for $n > 1$). Indeed, it was shown in \cite{AMS02} (Theorem A), using a very similar orientation argument, that such subgroups do not exist.
\end{remark}

\begin{definition}
\label{direction}
For every frame $\mathcal{V}$ (or $\mathcal{V}'$, $\mathcal{V}_i$, and so on), we denote by $e_{\sube}$ (resp. $e'_{\sube}$, $e_{i,\sube}$, and so on) the vector of unit norm contained in the half-line $V_{\sube} \cap V_{\subg}^{\halfperp}$. If $d$ is odd, we then have:\[\begin{cases}
V_{\subl}^{\halfperp} = V_{\subl} - \mathbb{R}^{\geq 0}e_{\sube} \\
V_{\subg}^{\halfperp} = V_{\subg} + \mathbb{R}^{\geq 0}e_{\sube}.
\end{cases}\]
\end{definition}

\subsection{Strength of contraction and other metric considerations}
\label{sec:metric}

From now on, we assume $d$ to be odd. We introduce on $\mathbb{R}^{d+1,d}$, in addition to its structural quadratic form $Q$, several positive definite quadratic forms. Every such form $N$ gives us an inner product (written $\langle x, y \rangle_N$), a Euclidean norm (written $\|x\|_N := N(x)^{\frac{1}{2}}$; hence also a metric on $\mathbb{R}^{d+1,d}$), and an operator norm (written also $\|g\|_N := \sup \frac{\|g(x)\|_N}{\|x\|_N}$).

First, we need a "global" norm, that we shall use most of the time: it will enable us to take measurements that do not depend on a particular frame. Insofar as all norms on a finite-dimensional space are equivalent, its choice does not really matter; however, the following particular expression will simplify some of the proofs. We define the form $N_0$ by
\begin{equation}
\label{eq:norm_definition}
\forall x \in \mathbb{R}^{d+1,d},\quad N_0(x) = N_S(\pi_S(x)) + N_T(\pi_T(x))
\end{equation}
(compare this with \eqref{eq:form_decomposition}).

However, for every frame $\mathcal{V}$, we also need a "local" norm, that will make calculations involving this frame easier. We define $N_\mathcal{V}$ to be the (positive definite) quadratic form on $\mathbb{R}^{d+1,d}$ that makes the spaces $V_{\subl}$, $V_{\sube}$ and $V_{\subg}$ pairwise orthogonal, but whose restriction to any of these spaces coincides with $N_0$.

Consider a vector space $E$ (for the moment, the reader may suppose that $E = \mathbb{R}^{d+1,d}$; later we will also need the case $E = \Lambda^d \mathbb{R}^{d+1,d}$). We define
\[\begin{cases}
\pi_\mathbb{S}: E \setminus \{0\} \to \mathbb{S}(E) \\
\pi_\mathbb{P}: E \setminus \{0\} \to \mathbb{P}(E)
\end{cases}\]
to be, respectively, the canonical projections onto the sphere $\mathbb{S}(E) := (E \setminus \{0\})/\mathbb{R}^{> 0}$ and the projective space $\mathbb{P}(E) := (E \setminus \{0\})/\mathbb{R}^*$. (Readers who think of the sphere as a subset of $E$ might get confused when we change the norm; this is why we define $\mathbb{S}(E)$ as an abstract quotient space.) For every linear map $g: E \to E$, we define the corresponding maps $g_\mathbb{S}: \mathbb{S}(E) \to \mathbb{S}(E)$ and $g_\mathbb{P}: \mathbb{P}(E) \to \mathbb{P}(E)$ (written simply $g$ when no confusion is possible.)

Consider a metric space $(X, \delta)$; let $A$ and $B$ be two subsets of $X$. We shall denote the ordinary, minimum distance between $A$ and $B$ by
\[\delta(A, B) := \inf_{a \in A} \inf_{b \in B} \delta(a, b),\]
as opposed to the Hausdorff distance, which we shall denote by
\[\delta^\mathrm{Haus}(A, B) := \max\left( \sup_{a \in A} \delta(a, B),\; \sup_{b \in B} \delta(b, A) \right).\]

For every positive definite quadratic form $N$ on $E$, for every $\overline{x}, \overline{y} \in \mathbb{S}(E)$, we define the distance
\[\alpha_N (\overline{x}, \overline{y}) := \arccos \frac{\langle x, y \rangle_N}{\|x\|_N \|y\|_N},\]
where $x$ and $y$ are any vectors representing respectively $\overline{x}$ and $\overline{y}$ (obviously, the value does not depend on the choice of $x$ and $y$). This measures the angle between the half-lines $\overline{x}$ and $\overline{y}$. For shortness' sake, we will usually simply write $\alpha_N(x, y)$ with $x, y \in E \setminus \{0\}$, to mean $\alpha_N (\pi_\mathbb{S}(x), \pi_\mathbb{S}(y))$.

In a similar way, we equip $\mathbb{P}(E)$ with the distance
\[\alpha_N^\mathrm{Proj} (x, y) := \alpha_N (\mathbb{R}x, \mathbb{R}y) = \min(\alpha_N(x, y),\; \alpha_N(x, -y)).\]
Note that for sets $X$ and $Y$ symmetric about the origin (such as vector spaces), we have $\alpha_N^\mathrm{Proj}(X, Y) = \alpha_N(X, Y)$: in this situation, we may ignore the distinction between the spherical and projective cases.

For any set $X \subset \mathbb{S}(E)$ and any radius $\eps > 0$, we shall denote the $\eps$-neighborhood of $X$ with respect to the distance $\alpha_N$ by:
\[B_N(X, \eps) := \setsuch{x \in \mathbb{S}(E)}{\alpha_N(x,X) < \eps}.\]
When $X$ is symmetric, we shall sometimes treat $B_N(X, \eps)$ as a subset of $\mathbb{P}(E)$.

For the sake of briefness, we shall often specify a "default" form at the beginning of some sections or paragraphs. In the rest of that section or paragraph, \emph{every} mention of \emph{any} of the metric-dependent values or functions defined above without explicit mention of the metric itself (such as $\langle x, y \rangle$, $\alpha(x, y)$, $B(X, \eps)$ and so on) is understood to refer to the current "default" metric.

Finally, we introduce the following notation. Let $A$ and $B$ be two positive quantities, and $p_1, \ldots, p_k$ some parameters. Whenever we write
\[A \ll_{p_1, \ldots, p_k} B,\]
we mean that there is a constant $C$, depending on nothing but $p_1, \ldots, p_k$, such that $A \leq CB$. (If we do not write any parameters, this means of course that $C$ is an absolute constant.) Whenever we write
\[A \asymp_{p_1, \ldots, p_k} B,\]
we mean that $A \ll_{p_1, \ldots, p_k} B$ and $B \ll_{p_1, \ldots, p_k} A$ at the same time.

\begin{definition}
Let $g$ be a pseudohyperbolic map, $\mathcal{V}$ its frame, $g_{\subl} = \restr{g}{V_{\subl}(g)}$ its dynamical part, $g_{\subg} = \restr{g}{V_{\subg}(g)}$. Since $V_{\subl}$ and $V_{\subg}$ are transversal, by Proposition \ref{wings_disjoint}, $V_{\subl}^{\halfperp}$ and $V_{\subg}^{\halfperp}$ have zero intersection. Their projections onto the sphere are then disjoint; being compact, they are always separated by a positive distance. We define the \emph{separation} of $\mathcal{V}$ (or, by abuse of terminology, of $g$) to be
\[\eps(g) = \eps(\mathcal{V}) := \alpha_{N_0}(V_{\subl}^{\halfperp}, V_{\subg}^{\halfperp}),\]
the distance between these projections in global metric. (The distance in local metric, $\alpha_{N_\mathcal{V}}(V_{\subl}^{\halfperp}, V_{\subg}^{\halfperp})$, is by definition always equal to $\frac{\pi}{2}$). For any constant $\eps > 0$, we say that $\mathcal{V}$ (or $g$) is \emph{$\eps$-separated} if $\eps(\mathcal{V}) \geq \eps$.

The \emph{strength of contraction} of $g$ is the quantity
\[s(g) := \max\left( \|g_{\subl}\|, \|g_{\subg}^{-1}\| \right)\]
(with the metric given indifferently by $N_0$ or $N_{\mathcal{V}(g)}$: both coincide on $V_{\subl}(g)$ and $V_{\subg}(g)$.) For $s > 0$, we say that $g$ is \emph{$s$-contracting} if $s(g) \leq s$. In this case, for all $x_{\subl} \in V_{\subl}(g)$ and $x_{\subg} \in V_{\subg}(g)$, we have
\[\frac{\|g(x_{\subl})\|}{\|x_{\subl}\|} \leq s \text{ and } \frac{\|g(x_{\subg})\|}{\|x_{\subg}\|} \geq s^{-1}.\]
\end{definition}

Note that if $d > 1$, there is no constant $C$ such that all pseudohyperbolic maps would be $C$-contracting, as the norm may be much larger than the spectral radius. However, for any pseudohyperbolic map $g$, we have
\[s(g^n) = \underset{n \to \infty}{O} \left( \rho(g_{\subl})^n \right) \underset{n \to \infty}{\to} 0.\]

Now we need to formulate an essential property of the metrics defined above, that we shall very often use subsequently. All of the norms $\| \bullet \|_{N_\mathcal{V}}$ and the associated distances $\alpha_{N_\mathcal{V}}$ are Lipschitz-equivalent, with a common Lipschitz constant that depends only on the separation of $\mathcal{V}$. More precisely:
\begin{lemma}
\label{uniformly_equivalent}
For every $\eps > 0$ and every $\eps$-separated frame $\mathcal{V}$, we have:
\[\forall x \in \mathbb{R}^{d+1,d},\quad
\|x\|_{N_\mathcal{V}} \asymp_{\eps} \|x\|_{N_0};\]
\[\forall x, y \in \mathbb{S}(\mathbb{R}^{d+1,d}),\quad
\alpha_{N_\mathcal{V}}(x, y) \asymp_{\eps} \alpha_{N_0}(x, y).\]
\end{lemma}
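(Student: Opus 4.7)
The plan is to reduce both assertions to a uniform comparison of the quadratic forms $N_\mathcal{V}$ and $N_0$ over $\eps$-separated frames. Writing $x = x_\subl + x_\sube + x_\subg$ for the decomposition along the frame, we have by definition $\|x\|_{N_\mathcal{V}}^2 = \|x_\subl\|_{N_0}^2 + \|x_\sube\|_{N_0}^2 + \|x_\subg\|_{N_0}^2$, and the direction $\|x\|_{N_0} \leq \sqrt{3}\|x\|_{N_\mathcal{V}}$ is immediate and $\eps$-free, by applying $(a+b+c)^2 \leq 3(a^2+b^2+c^2)$ to the $N_0$-norms of the three components.

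For the reverse inequality I would argue by compactness. The grassmannian $\mathscr{L}$ of MTIS'es is compact, hence so is $\mathscr{L} \times \mathscr{L}$. The separation $\mathcal{V} \mapsto \eps(\mathcal{V})$ is continuous on the open locus of transversal pairs, since the oriented positive wings depend continuously on $\mathcal{V}$ by Proposition \ref{orientation}; moreover it extends continuously by $0$ to non-transversal pairs, because a nonzero vector $w$ in a limiting intersection $V_\subl \cap V_\subg$ can be approximated from inside both wings, forcing $\eps(\mathcal{V}) \to 0$. Hence $\{\mathcal{V} : \eps(\mathcal{V}) \geq \eps\}$ is a compact subset of $\mathscr{L} \times \mathscr{L}$ consisting entirely of transversal frames. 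Since the direct sum decompositions $V_\subl \oplus V_\sube \oplus V_\subg$ vary continuously with $\mathcal{V}$, so does $\mathcal{V} \mapsto N_\mathcal{V}$, and the strictly positive continuous function $(\mathcal{V}, \overline{x}) \mapsto \|x\|_{N_\mathcal{V}}^2 / \|x\|_{N_0}^2$ on the compact $\{\eps\text{-separated}\} \times \mathbb{P}(\mathbb{R}^{d+1,d})$ attains a finite maximum, which yields the desired constant $C(\eps)$.

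The angle equivalence then follows from the norm equivalence by a standard manipulation: polarization promotes it to an $\eps$-uniform equivalence of the inner products, and the identity $\alpha_N(x,y) = 2\arcsin(\|x-y\|_N/2)$ for $N$-unit vectors $x, y$, combined with the bi-Lipschitz behavior of $\arcsin$ on any subinterval of $[0,1]$ bounded away from $1$, produces Lipschitz equivalence of the two angular distances with constants depending only on $\eps$ (small angles by this computation, the regime where one of the angles is bounded away from $0$ by continuity and compactness once more).

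The main obstacle I foresee is verifying the continuity of $\eps(\mathcal{V})$ at the boundary between transversal and non-transversal frames, which is what allows $\{\eps(\mathcal{V}) \geq \eps\}$ to be closed in the compact $\mathscr{L} \times \mathscr{L}$; once this is secured, the whole argument becomes a soft compactness exercise that bypasses any explicit estimate of the projection operators $x \mapsto x_\subl, x_\sube, x_\subg$.
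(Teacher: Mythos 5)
Your treatment of the norm equivalence is essentially the paper's own argument: the paper likewise takes the optimal constant $C(\mathcal{V})$ (viewed as the operator norm of the identity between the two normed structures), notes that it depends continuously on $\mathcal{V}$, and concludes by compactness of the set of $\eps$-separated frames. You add two things the paper leaves implicit: the cheap, $\eps$-free bound $\|x\|_{N_0} \leq \sqrt{3}\,\|x\|_{N_\mathcal{V}}$, and, more usefully, a justification of the compactness itself — the function $(V_1, V_2) \mapsto \alpha_{N_0}(V_1^{\halfperp}, V_2^{\halfperp})$ is continuous on all of $\mathscr{L} \times \mathscr{L}$ and vanishes precisely on non-transversal pairs, so the $\eps$-separated locus is closed in a compact space and consists of genuine frames. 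The paper simply asserts this compactness, so your supplement is welcome.

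For the angular statement the routes diverge. The paper proves a clean general fact: if two norms are $C$-Lipschitz-equivalent, the associated angle metrics are $C^2$-Lipschitz-equivalent, shown by restricting to the plane spanned by $x$ and $y$ and doing a two-dimensional computation; this is uniform and requires no further compactness. Your chord-length route (the identity $\alpha_N(x,y) = 2\arcsin(\|x-y\|_N/2)$ for $N$-unit vectors, small angles via the chord comparison, large angles via compactness) can also be made to work, but one sentence in it is false as stated: polarization does \emph{not} upgrade norm equivalence to an equivalence $\langle x, y\rangle_{N_\mathcal{V}} \asymp_{\eps} \langle x, y\rangle_{N_0}$ of inner products, since the two inner products may vanish, or take opposite signs, on the same pair of vectors, so no two-sided multiplicative comparison is possible. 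Fortunately that claim is not needed: what the chord argument actually requires is the norm equivalence already established together with the standard estimate that renormalizing a vector to the $N_\mathcal{V}$-unit sphere is Lipschitz — the chord identity needs unit vectors for the norm in question, and a point of $\mathbb{S}$ hands you an $N_0$-unit representative, not an $N_\mathcal{V}$-unit one. With the polarization sentence deleted and that renormalization step made explicit, your proof goes through, at the cost of being somewhat longer than the paper's two-dimensional reduction.
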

\begin{proof}
For any frame $\mathcal{V}$, let $C(\mathcal{V})$ be the Lipschitz constant between the norms given by $N_0$ and $N_\mathcal{V}$, \ie the smallest constant satisfying the first inequality above. Then $C(\mathcal{V})$ is always finite, and may be expressed as the operator norm of the identity map subordinated to the norms given by $N_0$ and $N_\mathcal{V}$: hence it depends continuously on $\mathcal{V}$. Since for any fixed $\eps > 0$, the set of all $\eps$-separated frames is compact, the first claim follows.

Now if two norms given by $N$ and $N'$ are $C$-Lipschitz-equivalent, then the corresponding distances $\alpha_N$ and $\alpha_{N'}$ are always $C^2$-Lipschitz-equivalent. Indeed, in dimension 2, this follows from a straightforward calculation; in the general case, we may simply fix two vectors $x$ and $y$ and restrict our attention to the subspace they span. Hence the second estimation follows from the first.
\qed
\end{proof}

\section{Pseudohyperbolicity of products}
\label{sec:group}

The goal of this section is to prove Proposition \ref{product_pseudohyperbolic}, which essentially states that under some conditions, the product of several pseudohyperbolic maps is still pseudohyperbolic.

\subsection{Proximal case}
\label{sec:proximal}

Let $E$ be a vector space. We fix a default quadratic form $\hat{N}_0$ on $E$. (In practice, we will apply the results of this subsection to $E = \Lambda^d \mathbb{R}^{d+1, d}$.)

Our first goal is to show Lemma \ref{product_proximal}, which is analogous to Proposition \ref{product_pseudohyperbolic} (and will be used to prove it), but with proximal maps instead of pseudohyperbolic ones. We begin by a few definitions.

\begin{definition}
Let $f \in GL(E)$. Let $\lambda$ be an eigenvalue of $f$ with maximal modulus. We say that $f$ is \emph{proximal} if $\lambda$ is unique and has multiplicity~1. We may then decompose $E$ into a direct sum of a line $V_s(f)$, called its \emph{attracting space}, and a hyperplane $V_u(f)$, called its \emph{repulsing space}, both stable by $f$ and such that:
\[\begin{cases}
\restr{f}{V_s} = \pm \lambda \Id \\
\text{for every eigenvalue } \mu \text{ of } \restr{f}{V_u},\; |\mu| < |\lambda|.
\end{cases}\]
We define the \emph{separation} of $f$ to be $\eta(f) := \alpha (V_s(f), V_u(f))$. For any constant $\eta > 0$, we say that $f$ is \emph{$\eta$-separated} if $\eta(f) \geq \eta$. For any quadratic form $N$ on $E$, we define the \emph{strength of contraction} of $f$ with respect to $N$ by
\[\hat{s}_N(f) := \frac{\|\restr{f}{V_u}\|_N}{|\lambda|}\]
(we remind that writing simply $\hat{s}$ means $\hat{s}_{\hat{N}_0}$.) Note that these definitions are different from the ones we used in the context of pseudohyperbolic maps (hence the new notation $\hat{s}$).
\end{definition}
\begin{definition}
An \emph{independent proximal system} is a tuple $F = (f_1, \ldots, f_n)$ of maps $f_i \in GL(E)$ such that:
\begin{enumerate}[(i)]
\item every $f_i$ and every $f_i^{-1}$ is proximal;
\item for every indices $i$, $i'$ and signs $\sigma$, $\sigma'$ such that $(i', \sigma') \neq (i, -\sigma)$, we have
\[\alpha(V_s(f_i^\sigma), V_u(f_{i'}^{\sigma'})) > 0.\]
\end{enumerate}
In this case, we define the \emph{separation} of $F$ to be
\[\eta(F) := \min_{(i', \sigma') \neq (i, -\sigma)} \alpha(V_s(f_i^\sigma), V_u(f_{i'}^{\sigma'})),\]
and the \emph{contraction strength} of $F$ to be
\[\hat{s}(F) := \max_{i, \sigma} \hat{s}(f_i^\sigma).\]
\end{definition}
\begin{definition}
\label{cyclically_reduced}
Take a nonnegative integer $k$, and take $k$ couples $(i_1, \sigma_1), \ldots, (i_k, \sigma_k)$ such that for every $l$, $1 \leq i_l \leq n$ and $\sigma_l = \pm 1$. Consider the word $f = f_{i_1}^{\sigma_1} \ldots f_{i_k}^{\sigma_k}$.

We say that $f$ is \emph{reduced} if for every $l$ such that $1 \leq l \leq k-1$, we have $(i_{l+1}, \sigma_{l+1}) \neq (i_l, -\sigma_l)$. We say that $f$ is \emph{cyclically reduced} if it is reduced and also satisfies $(i_1, \sigma_1) \neq (i_k, -\sigma_k)$.
\end{definition}

Now we prove an analog of Proposition \ref{product_pseudohyperbolic} in the proximal case:

\begin{lemma}
\label{product_proximal}
For every $\eta > 0$, there is a constant $\hat{s}(\eta) > 0$ with the following property. Let $F = (f_1, \ldots, f_n)$ be any $\eta$-separated, $\hat{s}(\eta)$-contracting independent proximal system. Let $f = f_{i_1}^{\sigma_1} \ldots f_{i_k}^{\sigma_k}$ (with $\sigma_l = \pm 1$) any nonempty cyclically reduced word.
Then $f$ is proximal, $\hat{s}(f) \ll_{\eta} \hat{s}(F)$ and
\[\alpha(V_s(f),\; V_s(f_{i_1}^{\sigma_1})) \;\ll_{\eta}\; \hat{s}(F).\]
\end{lemma}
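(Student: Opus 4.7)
The plan is a classical ping-pong argument on $\mathbb{P}(E)$, with attracting and repulsing tables of the form $B_s^{i,\sigma} := B(V_s(f_i^\sigma), \delta)$ and $B_u^{i,\sigma} := B(V_u(f_i^\sigma), \delta)$. The core single-letter estimate is that for a proximal $g$ with top eigenvalue $\lambda$, decomposing $v = v_s + v_u$ along $V_s(g) \oplus V_u(g)$ yields $\|g(v_s)\| = |\lambda|\|v_s\|$ and $\|g(v_u)\| \leq \hat{s}(g)|\lambda|\|v_u\|$; elementary spherical trigonometry then gives that whenever $\alpha(x, V_u(g)) \geq \delta$, one has $\alpha(g(x), V_s(g)) \ll_{\eta(g)} \hat{s}(g)/\sin \delta$. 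I would first pick $\delta = \delta(\eta) > 0$ small enough that the finite collection of closed balls $\bar{B}_s^{i,\sigma}$ and $\bar{B}_u^{i',\sigma'}$ (over all pairs with $(i',\sigma') \neq (i,-\sigma)$) is pairwise disjoint, then choose the threshold $\hat{s}(\eta)$ so small that each $f_i^\sigma$ sends $\mathbb{P}(E) \setminus B_u^{i,\sigma}$ into $B_s^{i,\sigma}$.

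A straightforward induction on $k$, reading $f = f_{i_1}^{\sigma_1} \cdots f_{i_k}^{\sigma_k}$ from right to left, then shows that $f$ maps the complement of $B_u^{i_k,\sigma_k}$ into $B_s^{i_1,\sigma_1}$. Applying the sharper single-letter estimate only at the last step --- where the input already lies inside $B_s^{i_2,\sigma_2}$, hence at angular distance at least $\delta/2$ from $V_u(f_{i_1}^{\sigma_1})$ --- refines this to $\alpha(f(x), V_s(f_{i_1}^{\sigma_1})) \ll_\eta \hat{s}(F)$. For cyclically reduced $f$, the condition $(i_k,\sigma_k) \neq (i_1,-\sigma_1)$ places $V_s(f_{i_1}^{\sigma_1})$ outside $\bar{B}_u^{i_k,\sigma_k}$, so $f$ sends $B_s^{i_1,\sigma_1}$ strictly into itself and produces an attracting fixed point $V_s(f) \in B_s^{i_1,\sigma_1}$ lying within angular distance $\ll_\eta \hat{s}(F)$ of $V_s(f_{i_1}^{\sigma_1})$. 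Since powers $f^n$ remain cyclically reduced words in the same generators, the same ping-pong applied to $f^n$ shows that $f^n$ contracts $\mathbb{P}(E) \setminus B_u^{i_k,\sigma_k}$ into balls of radius tending to $0$ around $V_s(f)$, which forces $f$ to be proximal with $V_u(f) \subset \bar{B}_u^{i_k,\sigma_k}$.

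It remains to bound $\hat{s}(f) = \|f|_{V_u(f)}\|/|\lambda(f)|$. I would estimate $|\lambda(f)|$ from below by tracking the attracting eigenvector through the ping-pong: each letter $f_{i_l}^{\sigma_l}$ stretches a near-$V_s(f_{i_l}^{\sigma_l})$ vector by a factor at least $C(\eta)^{-1}|\lambda(f_{i_l}^{\sigma_l})|$, giving $|\lambda(f)| \geq C(\eta)^{-k}\prod_l |\lambda(f_{i_l}^{\sigma_l})|$. For the numerator, the rightmost letter $f_{i_k}^{\sigma_k}$ already contracts $V_u(f) \subset \bar{B}_u^{i_k,\sigma_k}$ by an extra factor $\hat{s}(F)$, and each subsequent letter multiplies norms by at most $(1+\hat{s}(F))|\lambda(f_{i_l}^{\sigma_l})|$. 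Combining yields $\hat{s}(f) \leq C(\eta)\bigl(C(\eta)(1+\hat{s}(F))\bigr)^{k-1} \hat{s}(F)$. The main obstacle is to make this bound uniform in the word length $k$: one must further shrink $\hat{s}(\eta)$ so that $C(\eta)(1+\hat{s}(\eta)) \leq 1$, at which point the telescoping product collapses to an $\eta$-dependent constant and the desired bound $\hat{s}(f) \ll_\eta \hat{s}(F)$ holds independently of $k$.
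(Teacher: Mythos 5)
Your ping-pong set-up and the localization $\alpha(V_s(f), V_s(f_{i_1}^{\sigma_1})) \ll_\eta \hat{s}(F)$ follow the same route as the paper, but two points do not hold as written. The lesser one: your proximality argument rests on the claim that $f^n$ squeezes $\mathbb{P}(E)\setminus B_u^{i_k,\sigma_k}$ into balls of radius tending to $0$; the only quantitative input you state is the single-letter \emph{diameter} estimate, which, applied to the word $f^n$, gives an image of radius $\ll_\eta \hat{s}(F)$ around $V_s(f_{i_1}^{\sigma_1})$ --- a radius that does not shrink with $n$. What is needed is a genuine Lipschitz contraction of each letter on the complement of its repulsing neighborhood (this is \eqref{eq:Lipschitz1} of Lemma \ref{Lipschitz}); with $\mathcal{L}(f, X_1^+) < 1$ one can then invoke the standard fixed-point statement (Tits, Lemma 3.8) to get proximality and $V_s(f) \in X_1^+$. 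This is fixable, but the contraction estimate must be stated and proved.

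The serious gap is the bound $\hat{s}(f) \ll_\eta \hat{s}(F)$, which you correctly identify as the main obstacle but do not resolve. Your letter-by-letter bookkeeping of $|\lambda(f)|$ and $\|\restr{f}{V_u(f)}\|$ inevitably loses a factor $C(\eta) \geq 1$ per letter, because the splittings $V_s \oplus V_u$ of the various generators are not orthogonal and are only transversal with angle controlled by $\eta$; these losses accumulate as $C(\eta)^{k}$. Your proposed remedy --- shrink $\hat{s}(\eta)$ until $C(\eta)(1+\hat{s}(\eta)) \leq 1$ --- cannot work, since $C(\eta)$ is determined by the separation alone and is in general strictly larger than $1$, no matter how contracting the generators are; so your final bound still depends on the word length $k$, whereas the lemma requires uniformity in $k$. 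The paper circumvents exactly this by never comparing eigenvalues and norms along the word: Lemma \ref{Lipschitz} identifies $\hat{s}(f)$, up to $\eta$-dependent constants, with the Lipschitz constant of the projective action near $V_s(f)$. Lipschitz constants are submultiplicative under composition with \emph{no} per-letter geometric loss; each letter contributes a factor $\ll_\eta \hat{s}(F)$, which after shrinking the threshold is $< 1$, so the whole word satisfies $\mathcal{L}(f, \mathbb{P}(E)\setminus X_k^-) \ll_\eta \hat{s}(F)$ independently of $k$. One then checks that $f$ is $\frac{\eta}{3}$-separated (from $V_s(f) \in X_1^+$ and $V_u(f) \subset X_k^-$), which legitimizes the converse estimate \eqref{eq:Lipschitz2} and converts the Lipschitz bound back into $\hat{s}(f) \ll_\eta \hat{s}(F)$. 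Without this (or an equivalent mechanism that avoids the $C(\eta)^k$ accumulation), your proof of the second assertion fails.
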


Before proceeding, we need a technical lemma that relates the abstract strength of contraction $\hat{s}(f)$ and some actual Lipschitz constants of $f$ acting on the projective space $\mathbb{P}(E)$. For any set $X \subset \mathbb{P}(E)$, we introduce the following notation for the Lipschitz constant of $f$ restricted to $X$ in metric given by $N$:
\[\mathcal{L}_N(f, X) :=
\sup_{\substack{(x, y) \in X^2 \\ x \neq y}}
\frac{\alpha_N^\mathrm{proj}(f(x), f(y))}{\alpha_N^\mathrm{proj}(x, y)}\]

\begin{lemma}
\label{Lipschitz}
For any $\eta > 0$, $\zeta > 0$, for any proximal $\eta$-separated map $f$, we have :
\begin{subequations}
\label{eq:Lipschitz}
  \begin{equation}
  \label{eq:Lipschitz1}
    \mathcal{L} \left( f,\; \mathbb{P}(E) \setminus B (V_u(f), \zeta) \right) \ll_{\eta, \zeta} \hat{s}(f)
  \end{equation}
  \begin{equation}
  \label{eq:Lipschitz2}
    \hat{s}(f) \ll_{\eta, \zeta} \mathcal{L} \left( f,\; B (V_s(f), \zeta) \right)
  \end{equation}
\end{subequations}
(using of course the metric given by $\hat{N}_0$.)
\end{lemma}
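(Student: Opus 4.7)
My plan is to introduce, for each proximal map $f$, an adapted inner product $\hat{N}_f$ on $E$ agreeing with $\hat{N}_0$ on each of $V_s(f)$ and $V_u(f)$ separately but making the two subspaces orthogonal, and to prove both estimates directly in this adapted norm. Exactly as in Lemma~\ref{uniformly_equivalent}, the set of decompositions $E = V_s \oplus V_u$ (with $V_s$ a line and $V_u$ a hyperplane) satisfying $\alpha_{\hat{N}_0}(V_s, V_u) \geq \eta$ is compact, so the operator norm between $\hat{N}_f$ and $\hat{N}_0$, and consequently the Lipschitz constant between $\alpha_{\hat{N}_f}$ and $\alpha_{\hat{N}_0}$, are bounded by constants depending only on $\eta$. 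Converting from $\hat{N}_f$ back to $\hat{N}_0$ will therefore cost only an $\eta$-dependent factor, which is absorbed by $\ll_\eta$.

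In the adapted norm, $f$ is block-diagonal with $\restr{f}{V_s} = \pm \lambda \Id$ and $\|\restr{f}{V_u}\|_{\hat{N}_f} = \hat{s}(f)\,|\lambda|$. Fix a unit vector $e_s \in V_s$ and introduce the chart $F \colon V_u \to \mathbb{P}(E)$, $F(w) := [e_s + w]$. Since $V_s \perp V_u$ in $\hat{N}_f$, one checks $\alpha_{\hat{N}_f}(F(w), V_u) = \arctan(1/\|w\|)$ and $\alpha_{\hat{N}_f}(F(w), V_s) = \arctan \|w\|$, and $F$ is bi-Lipschitz on any ball $\{\|w\| \leq R\}$ with constants depending only on $R$. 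For~\eqref{eq:Lipschitz1}, the preimage of $\mathbb{P}(E) \setminus B(V_u, \zeta)$ under $F$ sits inside $\{\|w\| \leq \cot \zeta\}$; since $V_u$ is $f$-stable, a direct computation gives $F^{-1} \circ f \circ F(w) = \pm f(w)/\lambda$, a linear map of operator norm $\hat{s}(f)$. Composing this with the bi-Lipschitz estimate for $F$ on the compact set $\{\|w\| \leq \cot \zeta\}$ yields the required upper bound.

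For~\eqref{eq:Lipschitz2}, I choose a unit $u \in V_u$ realising $\|f(u)\|_{\hat{N}_f} = \|\restr{f}{V_u}\|_{\hat{N}_f}$ and consider the pair $x := [e_s]$ and $y_t := [e_s + tu]$. For $t > 0$ small enough (in terms of $\eta, \zeta$), both $x$ and $y_t$ lie in $B(V_s, \zeta)$; one computes $\alpha_{\hat{N}_f}(x, y_t) = \arctan t$ and $\alpha_{\hat{N}_f}(f(x), f(y_t)) = \arctan(t\,\hat{s}(f))$, so the ratio tends to $\hat{s}(f)$ as $t \to 0^+$, giving the desired lower bound on $\mathcal{L}$. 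The main obstacle, though essentially routine, is the bookkeeping of $\eta$-dependent factors when translating between $\hat{N}_f$ and $\hat{N}_0$ and when $\zeta$-balls in the two metrics are only comparable up to such factors; no single step is conceptually deep, but one must verify carefully that the reductions go through cleanly.
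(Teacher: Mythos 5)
Your proposal is correct and takes essentially the same route as the paper: the adapted form $\hat{N}_f$, the compactness argument giving an $\eta$-uniform comparison between $\alpha_{\hat{N}_f}$ and $\alpha_{\hat{N}_0}$, and the affine chart over $V_u(f)$ in which $f$ acts as the linear map $\pm f|_{V_u}/\lambda$ of operator norm $\hat{s}(f)$, with chart constants depending only on the radius (hence only on $\eta,\zeta$). Your two-point limiting computation for \eqref{eq:Lipschitz2} is just a local version of the paper's bilipschitz-chart lower bound, so nothing is substantively different.
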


\begin{proof}
Let $\eta > 0$, $\zeta > 0$. For every proximal $f$, we define on $E$ a quadratic form $\hat{N}_f$ that makes $V_s(f)$ and $V_u(f)$ orthogonal but coincides with $\hat{N}_0$ on these spaces. By an obvious generalization of Lemma \ref{uniformly_equivalent}, for every proximal $\eta$-separated map $f$, we have
\begin{equation}
\label{eq:N0_to_Nf}
\alpha^\mathrm{proj}_{\hat{N}_f} \asymp_{\eta} \alpha^\mathrm{proj}_{\hat{N}_0}.
\end{equation}
(Lemma \ref{uniformly_equivalent} referred to $\alpha$ rather than $\alpha^\mathrm{proj}$, but since both distances are locally equal, this makes little difference.) Now consider a proximal map $f$, and note the following facts:
\begin{itemize}
\item From \eqref{eq:N0_to_Nf}, it follows
\begin{align*}
\mathbb{P}(E) \setminus B (V_u(f),\; \zeta)\; &\subset\; \mathbb{P}(E) \setminus B_{\hat{N}_f} (V_s(f),\; \zeta'),\\
B (V_s(f),\; \zeta)\; &\supset\; B_{\hat{N}_f} (V_s(f),\; \zeta'),
\end{align*}
where $\zeta' = C\zeta$ for some constant $C$ depending only on $\eta$. Moreover, it is clear that $X \subset Y$ implies $\mathcal{L}(f, X) \leq \mathcal{L}(f, Y)$.
\item For all $X$, we have
\[\mathcal{L}_{\hat{N}_0}(f, X) \asymp_{\eta} \mathcal{L}_{\hat{N}_f}(f, X)\]
\item For any $\zeta' > 0$, we have
\[\mathcal{L}_{\hat{N}_f} \left( f,\; B_{\hat{N}_f} (V_s(f), \zeta') \right)
\asymp_{\zeta'} \hat{s}_{\hat{N}_f}(f).\]
Indeed, consider the projection $\pi_u: \mathbb{P}(E) \setminus V_u(f) \to V_u(f)$ parallel to $V_s(f)$, defined by $\pi_u(x_u:1) = x_u$ (with obvious notations). It induces a homeomorphism from $B_{\hat{N}_f} (V_s(f), \zeta')$ to the ball $\setsuch{x \in V_u(f)}{\|x\|_{\hat{N}_f} \leq \frac{1}{\tan \zeta'}}$. A straightforward calculation shows that the said homeomorphism is bilipschitz (with respect to the metrics $\alpha^\mathrm{proj}_{\hat{N}_f}$ and $\|\bullet\|_{\hat{N}_f}$), with a Lipschitz constant $C(\zeta')$ that does not at all depend on $f$ or $\eta$. On the other hand, the Lipschitz constant of the conjugate function $\pi_u \circ f \circ \pi_u^{-1}$ is nothing other than $\hat{s}_{\hat{N}_f}(f)$. Hence $f$ is Lipschitz-continuous with constant $C(\zeta')^2\hat{s}_{\hat{N}_f}(f)$, hence the conclusion.
\item Since $\hat{N}_f$ and $\hat{N}_0$ coincide on $V_u(f)$ and $V_s(f)$, we have $\hat{s}_{\hat{N}_f}(f) = \hat{s}_{\hat{N}_0}(f)$.
\end{itemize}
Now to show \eqref{eq:Lipschitz1}, we simply apply all these steps in succession, keeping in mind that
\[\mathbb{P}(E) \setminus B_{\hat{N}_f} (V_u(f), \zeta') = B_{\hat{N}_f} (V_s(f), \frac{\pi}{2} - \zeta').\]
To show \eqref{eq:Lipschitz2}, we apply the same steps in the reverse order.
\qed
\end{proof}

\begin{proof}[of Lemma \ref{product_proximal}]
Let $\eta > 0$, and let $F = (f_1, \ldots, f_n)$ be an $\eta$-separated, $\hat{s}(\eta)$-contracting independent proximal system (for a value $\hat{s}(\eta)$ to be specified later).

An immediate corollary of Lemma \ref{Lipschitz} is that for every $\eta$-separated proximal map $\phi$ and every $\zeta \leq \eta$, we have
\begin{equation}
\label{eq:eps6}
\phi \left( \mathbb{P}(E) \setminus B(V_u(\phi), \zeta) \right) \subset B\left( V_s(\phi),\; C\left( \eta, \zeta \right)\hat{s}(\phi) \right)
\end{equation}
for some constant $C(\eta, \zeta)$. Indeed, $V_s(\phi) \in \mathbb{P}(E) \setminus B(V_u(\phi), \zeta)$ is a fixed point of $\phi$ and $\mathrm{diam}(\mathbb{P}(E) \setminus B(V_u(\phi), \zeta)) \leq \frac{\pi}{2} \ll 1$.

Let $\eta' = C(\eta, \frac{\eta}{3})\hat{s}(F)$. For every $l$ in the range from $1$ to $k$, we set
\[\begin{cases}
X_l^- := B(V_u(f_{i_l}^{\sigma_l}), \frac{\eta}{3}) \\
X_l^+ := B(V_s(f_{i_l}^{\sigma_l}), \eta').
\end{cases}\]
Then by \eqref{eq:eps6}, for every $l$ we have $f_{i_l}^{\sigma_l}(\mathbb{P}(E) \setminus X_l^-) \subset X_l^+$. Since $\hat{s}(F) \leq \hat{s}(\eta)$, if we choose $\hat{s}(\eta)$ small enough, we may suppose that $\eta' \leq \frac{\eta}{3}$. Then for every $l$ we also have $X_l^+ \subset \mathbb{P}(E) \setminus X_{l-1}^-$ (since the word $f$ is reduced). By induction, it follows that
\[f(\mathbb{P}(E) \setminus X_k^-) \subset X_1^+.\]

Now by \eqref{eq:Lipschitz1}, we know that for every $l$
\begin{subequations}
\begin{equation}
\label{eq:lip_less_s}
\mathcal{L} \left( f_{i_l}^{\sigma_l},\; \mathbb{P}(E) \setminus X_l^- \right) \ll_{\eta} \hat{s}(F) \leq \hat{s}(\eta).
\end{equation}
Once again, choosing $\hat{s}(\eta)$ small enough, we may actually suppose that
\begin{equation}
\label{eq:lip_less_1}
\mathcal{L} \left( f_{i_l}^{\sigma_l},\; \mathbb{P}(E) \setminus X_l^- \right) < 1.
\end{equation}
\end{subequations}
Since $f$ is cyclically reduced, we have $X_1^+ \subset \mathbb{P}(E) \setminus X_k^-$; hence $X_1^+$ is stable by $f$ and, by induction, we get
\[\mathcal{L} \left( f,\; X_1^+ \right) < 1.\]

It follows that $f$ is proximal and $V_s(f) \in X_1^+$ (see \cite{Tits72}, Lemma 3.8 for a proof), which settles the first and third statement of the conclusion. On the other hand, it is easy to see that $V_u(f) \subset X_k^-$ (indeed, consider any point $x \in \mathbb{P}(E)$ belonging to $V_u(f)$ but not to $X_k^-$: then we would have $\lim_{n \to \infty} f^n(x) = V_s(f)$, which contradicts the fact that $V_u(f)$ is a stable subspace). But we know that
\begin{align*}
\alpha(X_1^+, X_k^-)
  &\geq \textstyle
    \alpha(V_s(f_{i_1}^{\sigma_1}), V_u(f_{i_k}^{\sigma_k})) - \eta' - \frac{\eta}{3}\\
  &\geq \textstyle
    \eta - \frac{\eta}{3} - \frac{\eta}{3}\\
  &= \textstyle \frac{\eta}{3},
\end{align*}
hence $f$ is $\frac{\eta}{3}$-separated.

This allows us to apply \eqref{eq:Lipschitz2} to $f$:
\[\hat{s}(f) \ll_{\eta} \mathcal{L} \left( f,\; B(V_s(f), \frac{\eta}{3}) \right).\]
We know that $B(V_s(f), \frac{\eta}{3}) \subset B(V_s(f_{i_1}^{\sigma_1}), \frac{2\eta}{3}) \subset \mathbb{P}(E) \setminus X_k^-$, hence
\[\mathcal{L} \left( f,\; B(V_s(f), \frac{\eta}{3}) \right)
   \leq \mathcal{L} \left( f,\; \mathbb{P}(E) \setminus X_k^- \right).\]
On the other hand, using \eqref{eq:lip_less_s} in combination with \eqref{eq:lip_less_1}, we get that
\[\mathcal{L} \left( f,\; \mathbb{P}(E) \setminus X_k^- \right)
  \ll_{\eta} \hat{s}(F).\]
Stringing together these inequalities, we get
\[\hat{s}(f) \ll_{\eta} \hat{s}(F),\]
which settles the second statement of the conclusion.
\qed
\end{proof}

\subsection{Pseudohyperbolic case}
\label{sec:pseudohyperbolic}

Throughout this section, we work by default in metric given by $N_0$.

\begin{definition}
We define a \emph{frameset} $\mathcal{W}$ to be a set of $n$ frames $\mathcal{V}_1, \ldots, \mathcal{V}_n$ whose $2n$ components $V_{1, \subl}, V_{1, \subg}, \ldots, V_{n, \subl}, V_{n, \subg}$ are pairwise transversal. We define the \emph{separation} $\eps(\mathcal{W})$ of the frameset to be the minimal separation between any two MTIS'es forming the frameset.

Let $\mathcal{W} = (\mathcal{V}_1, \ldots, \mathcal{V}_n)$ be a frameset. A \emph{group based on $\mathcal{W}$} is a group $G$ generated by pseudohyperbolic maps $g_1, \ldots, g_n$ with respective frames $\mathcal{V}_1, \ldots, \mathcal{V}_n$. For $s > 0$, we say that $G$ is \emph{$s$-contracting} if all of its generators are $s$-contracting; the \emph{contraction strength} of $G$ is the number
\[s(G) := \max_i s(g_i).\]
\end{definition}

\begin{remark} \mbox{ }
\begin{itemize}
\item By the "separation between $V$ and $V'$", we mean here the separation of the frame $(V, V')$. Take care that we take the minimum over all of the $\binom{2n}{2}$ possible pairings, not just the frames $\mathcal{V}_1, \ldots, \mathcal{V}_n$.
\item Lemma \ref{pairwise_transversal} guarantees that framesets with an arbitrarily large number of frames exist.
\end{itemize}
\end{remark}

\begin{proposition}
\label{product_pseudohyperbolic}
For every $\eps > 0$, there is a constant $s_1(\eps) > 0$ with the following property. Let $\mathcal{W}$ be any $\eps$-separated frameset, $G =\,<g_1, \ldots, g_n>$ any $s_1(\eps)$-contracting group based on $\mathcal{W}$, $g = g_{i_1}^{\sigma_1} \ldots g_{i_k}^{\sigma_k}$ (with $\sigma_l = \pm 1$) any nonempty cyclically reduced word.

Then $g$ is pseudohyperbolic, $\frac{\eps}{3}$-separated, 1-contracting, and
\[\alpha_{N_0}^\mathrm{Haus}(V_{\subg}(g), V_{\subg}(g_{i_1}^{\sigma_1}))\; \ll_{\eps}\; s(G).\]
\end{proposition}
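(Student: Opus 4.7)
The strategy, following AMS02's Lemma~5.24 as mentioned at the start of \S\ref{sec:group}, is to reduce the pseudohyperbolic case to the proximal case already treated in Lemma~\ref{product_proximal} by passing to the exterior power $E := \Lambda^d \mathbb{R}^{d+1,d}$. The key observation is that if $g$ is pseudohyperbolic with frame $\mathcal{V}$, then $\Lambda^d g$ is proximal on $E$ with attracting line $V_s(\Lambda^d g) = \Lambda^d V_\subg(g)$ (dominant eigenvalue $\det g_\subg$); the other eigenvalues of $\Lambda^d g$ are products of $d$ eigenvalues of $g$ involving at least one vector from $V_\subl(g) \oplus V_\sube(g)$, hence strictly smaller in modulus. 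Analogously $V_s(\Lambda^d g^{-1}) = \Lambda^d V_\subl(g)$.

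First I would verify that the tuple $F := (\Lambda^d g_1, \ldots, \Lambda^d g_n)$ is an independent proximal system with separation $\eta(F) \gtrsim_\eps 1$ and contraction strength $\hat{s}(F) \ll_\eps s(G)$. The separation rests on the $\eps$-separation of the $2n$ MTIS's of $\mathcal{W}$ and transfers, via a compactness argument using continuity of the Plücker embedding, to a lower bound on the angle between $\Lambda^d V_{i,\subg}$ or $\Lambda^d V_{i,\subl}$ and the repelling hyperplanes $V_u(\Lambda^d g_{i'}^{\sigma'})$. The contraction bound reflects that the subdominant eigenvalue of $\Lambda^d g_i$ is at most $s(g_i)$ times its dominant one. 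Choosing $s_1(\eps)$ so that $\hat{s}(F) \le \hat{s}(\eta)$, one may apply Lemma~\ref{product_proximal} both to $\Lambda^d g$ and to $\Lambda^d g^{-1}$, obtaining that both are proximal with $V_s$'s at distance $\ll_\eps s(G)$ from $\Lambda^d V_\subg(g_{i_1}^{\sigma_1})$ and $\Lambda^d V_\subl(g_{i_k}^{\sigma_k})$ respectively.

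From proximality of both $\Lambda^d g^{\pm 1}$, one deduces gaps in the spectrum of $g$ between its $d$-th and $(d+1)$-th as well as its $(d+1)$-th and $(d+2)$-th eigenvalue moduli; this forces $\dim V_\subl(g) = \dim V_\subg(g) = d$ and $\dim V_\sube(g) = 1$. The sole central eigenvalue lies in $\{+1,-1\}$ by the $O(d+1,d)$-duality of the spectrum, and equals $+1$ because $\det g = 1$ (as $g$ is a word in $SO(d+1,d)$-elements) while all non-central eigenvalues pair reciprocally and contribute $1$ to the determinant. Hence $g$ is pseudohyperbolic. The Hausdorff-distance bound then follows by pulling back the projective closeness of $V_s(\Lambda^d g)$ to $\Lambda^d V_\subg(g_{i_1}^{\sigma_1})$ through the Plücker embedding, which is bilipschitz near decomposable wedges.

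For the last two assertions: the $\eps/3$-separation is a continuity argument, since both $V_\subl(g)$ and $V_\subg(g)$ are within distance $\ll_\eps s(G)$ of two of the $\eps$-separated MTIS's of $\mathcal{W}$, so that shrinking $s_1(\eps)$ ensures the frame separation degrades by at most $2\eps/3$. The $1$-contracting property follows from $\hat{s}(\Lambda^d g^{\pm 1}) \ll_\eps s(G)$ by expressing $\|g_\subl\|$ and $\|g_\subg^{-1}\|$ in terms of the subdominant-to-dominant eigenvalue ratios of $\Lambda^d g^{\pm 1}$ (the same spectral gap governs both). The main obstacle I anticipate is exactly this dictionary between $\Lambda^d$-proximality data and pseudohyperbolic data --- both the Plücker pullback of the attractor and the recovery of $s(g)$ from $\hat{s}(\Lambda^d g)$ --- which requires careful control of decomposability and of the spectral gap of $g$.
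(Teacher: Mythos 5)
Your proposal follows essentially the same route as the paper: pass to $E=\Lambda^d\mathbb{R}^{d+1,d}$, check that $(\Lambda^d g_1,\ldots,\Lambda^d g_n)$ is an $\eta(\eps)$-separated independent proximal system with $\hat{s}\ll_\eps s(G)$, apply Lemma \ref{product_proximal}, and translate back through exactly the dictionary the paper isolates as Lemma \ref{pseudoh-to-prox} (proximality of $\Lambda^d g$ versus pseudohyperbolicity of $g$, $s(g)\asymp_\eps\hat{s}(\Lambda^d g)$ when $s(g)<1$, and the bilipschitz comparison $\alpha^{\mathrm{Haus}}_{N_0}(V_1,V_2)\asymp\alpha_{\Lambda^d N_0}(\Lambda^d V_1,\Lambda^d V_2)$). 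The only minor deviations are that the paper gets pseudohyperbolicity from proximality of $\Lambda^d g$ alone (isotropic subspaces have dimension at most $d$, forcing $|\lambda_{d+1}|=1$) rather than from two-sided proximality plus spectral duality as you do, and that the contraction transfer must be carried out with singular values in the adapted norm $N_{\mathcal{V}}$ rather than eigenvalue ratios --- precisely the technical point you correctly flag as the main remaining work.
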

\begin{definition}
Such a group will be called a \emph{pseudohyperbolic group}.
\end{definition}
\begin{remark}
A pseudohyperbolic group is always free. Indeed, take any reduced word formed on its generators. We may find a cyclically reduced word conjugate to it, and we then know that it is a pseudohyperbolic map. Hence it is not equal to the identity.
\end{remark}

The Proposition follows from Lemma \ref{product_proximal} applied to the space $E := \Lambda^d \mathbb{R}^{d+1, d}$. Indeed, there is a correspondence between pseudohyperbolic maps in $\mathbb{R}^{d+1, d}$ and proximal maps in $E$, as will be shown below.

For every map $g \in L(\mathbb{R}^{d+1, d})$, we define the corresponding map $\Lambda^d g \in L(E)$, and for every quadratic form $N$ on $\mathbb{R}^{d+1, d}$, we define the corresponding quadratic form $\Lambda^d N$ on $E$ by
\[\langle x_1 \wedge \ldots \wedge x_d,\; y_1 \wedge \ldots \wedge y_d \rangle_{\Lambda^d N}
:= \sum_{\sigma \in \mathcal{S}_d} \epsilon_\sigma \prod_{i=1}^d \langle x_i, y_{\sigma(i)} \rangle_N\]
(where $\mathcal{S}_d$ is the set of permutations of $\{1, \ldots, d\}$ and $\epsilon_\sigma$ stands for the signature of $\sigma$). We set the default form on $E$ to be $\hat{N}_0 = \Lambda^d N_0$. Let us now formulate the desired correspondence:

\begin{lemma} \mbox{ }
\label{pseudoh-to-prox}
\begin{enumerate}[(i)]
\item For $g \in SO(d+1,d)$, $\Lambda^d g$ is proximal iff $g$ is pseudohyperbolic. Moreover, the attracting (resp. repulsing) space of $\Lambda^d g$ depends on nothing but $V_{\subg}(g)$ (resp. $V_{\subl}(g)$):
\begin{equation}
\label{eq:frame_transformation}
\begin{cases}
V_s(\Lambda^d g) = \Lambda^d V_{\subg}(g) \\
V_u(\Lambda^d g) = \setsuch{x \in E}{x \wedge \Lambda^{d+1} V_{\suble}(g) = 0}.
\end{cases}
\end{equation}
\item For every $\eps > 0$, there is a constant $\eta(\eps) > 0$ such that for every $\eps$-separated frame $\mathcal{V}$, we have
\[\alpha (V_s, V_u) \geq \eta(\eps)\]
(with $V_s$ and $V_u$ defined as in \eqref{eq:frame_transformation}).
\item For every $\eps > 0$, for every $\eps$-separated pseudohyperbolic map $g \in SO(d+1,d)$, we have
\[s(g) \ll_{\eps} \hat{s}(\Lambda^d g).\]
If in addition $s(g) < 1$, we have
\[s(g) \asymp_{\eps} \hat{s}(\Lambda^d g).\]
\item For any two $d$-dimensional subspaces $V_1$ and $V_2$ of $\mathbb{R}^{d+1, d}$, we have
\[\alpha_{N_0}^\mathrm{Haus}(V_1, V_2)
\;\asymp\; \alpha_{\Lambda^d N_0}(\Lambda^d V_1, \Lambda^d V_2).\]
\end{enumerate}
\end{lemma}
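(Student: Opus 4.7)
The plan is to transport the pseudohyperbolic structure on $\mathbb{R}^{d+1,d}$ to a proximal structure on $E = \Lambda^d \mathbb{R}^{d+1,d}$ via a direct spectral computation for part (i), and then to prove the metric comparisons in (ii)--(iv) by working in the local metric $N_{\mathcal{V}}$ and invoking compactness of the set of $\eps$-separated frames.

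For part (i), I use that the eigenvalues of $\Lambda^d g$ are exactly the products of $d$ distinct eigenvalues of $g$ (counted with multiplicity). If $g$ is pseudohyperbolic, with eigenvalues $\lambda_i$ of $g_{\subl}$ (all $|\lambda_i|<1$), middle eigenvalue $+1$, and eigenvalues $\lambda_i^{-1}$ of $g_{\subg}$ (by \eqref{eq:duality}), then the unique maximal-modulus product is $\prod_i \lambda_i^{-1} = \det g_{\subg}$, realized by the pure wedge spanning $\Lambda^d V_{\subg}$; this gives $V_s(\Lambda^d g) = \Lambda^d V_{\subg}$. The repulsing hyperplane is the sum of the remaining generalized eigenspaces, namely all wedges having at least one factor in $V_{\suble} = V_{\subl} \oplus V_{\sube}$, which by a direct wedge identity is $\{x \in E : x \wedge \Lambda^{d+1} V_{\suble} = 0\}$. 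For the converse, $Q$-invariance of $g \in SO(d+1,d)$ pairs the spectrum as $(\nu, \nu^{-1})$ with a single self-paired eigenvalue in $\{\pm 1\}$; proximality of $\Lambda^d g$ forces a gap $|\nu_d| > 1 > |\nu_{d+2}|$, and the sum of generalized eigenspaces for $|\nu|>1$ is a MTIS because any two eigenspaces for non-reciprocal eigenvalues are $Q$-orthogonal. The assumption $g \in SO$, together with $\det g_{\subg} \cdot \det g_{\subl} = 1$, excludes the self-paired eigenvalue $-1$.

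Parts (ii) and (iv) are quick. For (ii), I first verify $V_s \cap V_u = \{0\}$ for every frame: $V_s \subset V_u$ is equivalent to $V_{\subg} \subset V_{\suble}$, which would force the $Q$-pairing $V_{\subl} \times V_{\subg} \to \mathbb{R}$ to vanish, contradicting the non-degeneracy of this pairing between transversal MTIS'es. Since $\alpha(V_s, V_u)$ depends continuously on the frame and the space of $\eps$-separated frames is compact, it attains a positive minimum $\eta(\eps)$. For (iv), I pick $N_0$-orthonormal bases of $V_1$ and $V_2$ aligned with the principal angles $\theta_1 \leq \ldots \leq \theta_d$; the identity $\langle \omega_1, \omega_2 \rangle_{\Lambda^d N_0} = \prod_i \cos \theta_i$ (where $\omega_i$ is the induced top wedge) gives $\alpha_{\Lambda^d N_0}(\Lambda^d V_1, \Lambda^d V_2) = \arccos \prod_i \cos \theta_i$, and the elementary estimate $1 - \prod_i \cos \theta_i \asymp \sum_i \theta_i^2 \asymp \theta_d^2$ yields comparability with $\theta_d$, which is $\alpha_{N_0}^{\mathrm{Haus}}(V_1, V_2)$ by the standard principal-angle description of the Hausdorff distance.

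Part (iii) will be the main obstacle. The attracting line and repulsing hyperplane are orthogonal in $\Lambda^d N_{\mathcal{V}}$ but generally not in $\Lambda^d N_0$, so I plan to first pass to the local metric, paying an $\eps$-dependent factor via a $\Lambda^d$-analogue of Lemma \ref{uniformly_equivalent}. In local metric the decomposition $E = \bigoplus_{a+b+c=d} \Lambda^a V_{\subl} \otimes \Lambda^b V_{\sube} \otimes \Lambda^c V_{\subg}$ is orthogonal and $\Lambda^d g$-stable. Writing $\sigma_1^{\subg} \geq \ldots \geq \sigma_d^{\subg} > 1$ for the singular values of $g_{\subg}$, the top eigenvalue of $\Lambda^d g$ has modulus $\prod_i \sigma_i^{\subg}$, and the operator norm on block $(a,b,c)$ is bounded by the product of the top $a$ singular values of $g_{\subl}$ and the top $c$ of $g_{\subg}$. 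The block $(0,1,d-1)$ gives ratio exactly $1/\sigma_d^{\subg} = \|g_{\subg}^{-1}\|$; combined with the duality \eqref{eq:duality}, yielding $\|g_{\subg}^{-1}\| \asymp_{\eps} \|g_{\subl}\| \asymp_{\eps} s(g)$ via the $\eps$-controlled operator norm of $\Phi_{\mathcal{V}}$, this produces the always-valid inequality $s(g) \ll_{\eps} \hat{s}(\Lambda^d g)$. When $s(g) < 1$, every block ratio is bounded by $s(g)^{2a+b} \leq s(g)$ (using $\sigma_i^{\subl} \leq s(g)$ and $1/\sigma_j^{\subg} \leq s(g)$), yielding the reverse inequality. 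The main technical burden is bookkeeping the $\eps$-dependence in the metric swap, the duality bound on $\Phi_{\mathcal{V}}$, and the block estimates, each of which is uniformly controlled on $\eps$-separated frames by compactness.
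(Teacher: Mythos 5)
Your proposal is correct and follows essentially the same route as the paper: eigenvalue products of $\Lambda^d g$ for (i), continuity plus compactness of the set of $\eps$-separated frames for (ii), a singular-value computation in the local metric $N_{\mathcal{V}}$ combined with the duality $\|g_{\subl}\| \asymp_{\eps} \|g_{\subg}^{-1}\|$ and the $\Lambda^d$-analogue of Lemma \ref{uniformly_equivalent} for (iii), and principal angles for (iv). Two harmless slips to fix: in (ii), $V_s \subset V_u$ is equivalent to $V_{\subg} \cap V_{\suble} \neq \{0\}$ (not $V_{\subg} \subset V_{\suble}$), which still contradicts the non-degeneracy of the $Q$-pairing between transversal MTIS'es; and in (iii) the claim $\sigma_d^{\subg} > 1$ is only guaranteed when $s(g) < 1$, though your argument only uses it in that case.
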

\begin{proof} \mbox{ }
\begin{enumerate}[(i)]
\item Let $g \in SO(d+1,d)$. Let $\lambda_{1}, \ldots \lambda_{2d+1}$ be the eigenvalues of $g$ counted with multiplicity and ordered by increasing absolute value. Then we know that the eigenvalues of $\Lambda^d g$ counted with multiplicity are exactly the products of the form $\lambda_{i_1}\ldots\lambda_{i_d}$, where $1 \leq i_1 < \ldots < i_d \leq 2d+1$. As the two largest of them are $\lambda_{d+2} \ldots \lambda_{2d+1}$ and $\lambda_{d+1}\lambda_{d+3} \ldots \lambda_{2d+1}$, it follows that $\Lambda^d g$ is proximal iff $|\lambda_{d+1}| < |\lambda_{d+2}|$.

Suppose that this is the case. Being isotropic spaces, $V_{\subl}(g)$ and $V_{\subg}(g)$ have dimension at most $d$; it follows that $|\lambda_{d+1}| = 1$. We then have $|\lambda_{d+2}| > 1$, hence $\dim V_{\subg}(g) = d$. Since $V_{\sube}(g) \subset V_{\subg}(g)^\perp$ and $V_{\sube}(g)$ is transversal to $V_{\subg}(g)$, we get that $\dim V_{\sube}(g) = 1$. Having all this, it is easy to show that the identity \eqref{eq:duality} holds, hence $\lambda_{d+1} = \frac{\det g}{(\det g_{\subl})(\det g_{\subg})} = 1$. We conclude that $g$ is pseudohyperbolic. The converse is obvious.

As for the expression of $V_s$ and $V_u$, it follows immediately by considering a basis that trigonalises $g$.

\item Let $\eps > 0$. Clearly, $\alpha(V_s, V_u)$
depends continuously on $V_{\subl}$ and $V_{\subg}$, and never vanishes when $V_{\subl}$ and $V_{\subg}$ are transversal. Since the set of all $\eps$-separated frames is compact, this expression must have a positive lower bound.

\item Let $\eps > 0$; let $g \in SO(d+1, d)$ be an $\eps$-separated pseudohyperbolic map with frame $\mathcal{V}$. We proceed in three steps.

\begin{itemize}
\item First, note that, by \eqref{eq:duality}, we have $\|g_{\subg}^{-1}\| \asymp_{\eps} \|g_{\subl}\|$, hence
\begin{equation}
\label{eq:g_<_and_g_>}
s(g) \asymp_{\eps} \|g_{\subg}^{-1}\|.
\end{equation}

\item Second, let us show that for any proximal map $f$, we have
\begin{equation}
\label{eq:exterior_uniformly_equivalent}
\hat{s}_{\Lambda^d N_0}(f) \asymp_{\eps} \hat{s}_{\Lambda^d N_\mathcal{V}}(f).
\end{equation}
(Caution: $\Lambda^d N_\mathcal{V}$ is in general not the same as $\hat{N}_f$.) Indeed, note that if some norms given by $N$ and $N'$ are $C$-Lipschitz-equivalent, then the norms given by $\Lambda^d N$ and $\Lambda^d N'$ are $C^d$-Lipschitz-equivalent. The above inequalities then follow from Lemma \ref{uniformly_equivalent}.

\item The last step is to prove the result in metric given by $N_{\mathcal{V}(g)}$. Let $s_{1} \leq \ldots \leq s_{d}$ (resp. $s'_{1} \geq \ldots \geq s'_{d}$) be the singular values of $g_{\subg}$ (resp. $g_{\subl}$), so that $\|g_{\subl}\|_{N_\mathcal{V}} = \|g_{\subl}\|_{N_0} = s'_{1}$ and $\|g_{\subg}^{-1}\| = s_{1}^{-1}$. Since the spaces $V_{\subl}$, $V_{\sube}$ and $V_{\subg}$ are stable by $g$ and pairwise $N_\mathcal{V}$-orthogonal, we get that the singular values of $g$ in metric given by $N_\mathcal{V}$ are
\[s'_{d}, \ldots, s'_{1}, 1, s_{1}, \ldots, s_{d}\]
(note however that if we do not suppose $s(g) < 1$, this list might not be sorted in increasing order.) On the other hand, we know that the singular values of $\Lambda^d g$ in metric given by $\Lambda^d N_\mathcal{V}$ are products of $d$ distinct singular values of $g$ in metric given by $N_\mathcal{V}$. Since $V_s(\Lambda^d g)$ is $\Lambda^d N_\mathcal{V}$-orthogonal to $V_u(\Lambda^d g)$, we may once again analyze the singular values separately for each subspace. We know that the singular value corresponding to $V_s$ is equal to $s_{1} \ldots s_{d}$; we deduce that $\left\| \restr{\Lambda^d g}{V_u} \right\|_{\Lambda^d N_\mathcal{V}}$ is equal to the maximum of the remaining singular values. In particular it is larger than $1 \cdot s_{2} \ldots s_{d}$. On the other hand, if $\lambda$ is the largest eigenvalue of $\Lambda^d g$, then we have
\[|\lambda| = |\lambda_1 \ldots \lambda_d| = |\det g_{\subg}| = s_{1} \ldots s_{d}\]
(where $\lambda_1, \ldots, \lambda_d$ are the eigenvalues of $g_{\subg}$). It follows that:
\begin{equation}
\label{eq:s'_lower_bound}
\hat{s}_{\Lambda^d N_\mathcal{V}}(\Lambda^d g)
   = \frac{\left\| \restr{\Lambda^d g}{V_u} \right\|_{\Lambda^d N_\mathcal{V}}}{|\lambda|}
   \geq \frac{1 \cdot s_{2} \ldots s_{d}}{s_{1} \ldots s_{d}}
   = s_{1}^{-1}
   = \|g_{\subg}^{-1}\|.
\end{equation}
By combining \eqref{eq:g_<_and_g_>}, \eqref{eq:exterior_uniformly_equivalent} and \eqref{eq:s'_lower_bound}, we get the first estimation.

Now suppose that $s(g) < 1$. Then we have $s'_{1} \leq s(g) < 1$ and $1 < s(g)^{-1} \leq s_{1}$, which means that the singular values of $\Lambda^d g$ are indeed sorted in the "correct" order. Hence $1 \cdot s_{2} \ldots s_{d}$ is actually the largest singular value of $\restr{\Lambda^d g}{V_u}$, and the inequality becomes an equality: $\hat{s}_{\Lambda^d N_\mathcal{V}}(\Lambda^d g) = \|g_{\subg}^{-1}\|$. The second estimation follows.
\end{itemize}

\item Let $V_1$ and $V_2$ be two $d$-dimensional spaces. We introduce the notations:
\[\alpha_1 := \alpha_{N_0}^\mathrm{Haus}(V_1, V_2);\]
\[\alpha_2 := \alpha_{\Lambda^d N_0}(\Lambda^d V_1, \Lambda^d V_2).\]
We may find an $N_0$-orthonormal basis $(e_1, \ldots, e_{2d+1})$ of $\mathbb{R}^{d+1,d}$ such that $V_1$ has basis $(e_1, \ldots, e_d)$ and $V_2$ has basis
\[ \left( (\cos \theta_i) e_i + (\sin \theta_i) e_{d+i} \right)_{1 \leq i \leq d},\]
for some angles $0 \leq \theta_1 \leq \ldots \leq \theta_d \leq \frac{\pi}{2}$. In this case, we have:
\[\alpha_1 = \theta_d\]
and
\[\cos \alpha_2 = \prod_{i=1}^d \cos \theta_i,\]
hence
\[(\cos \alpha_1)^d \leq \cos \alpha_2 \leq \cos \alpha_1.\]
On the other hand, from the concavity of the function $y \mapsto (\arccos \exp y)^2$, it follows that for every $\theta \in [0, \frac{\pi}{2}]$, we have
\[ \arccos((\cos \theta)^d) \leq \sqrt{d}\theta.\]
Finally we get
\[\alpha_1 \leq \alpha_2 \leq \sqrt{d}\alpha_1,\]
QED.\qed
\end{enumerate}
\end{proof}

We may now prove the main Proposition.

\begin{proof}[of Proposition \ref{product_pseudohyperbolic}]
Let $\eps > 0$; let $\mathcal{W} = (\mathcal{V}_1, \ldots, \mathcal{V}_n)$ be an $\eps$-separated frameset and $G =\,<g_1, \ldots, g_n>$ be an $s_1(\eps)$-contracting group based on $\mathcal{W}$, for some constant $s_1(\eps)$ to be specified later. Let $g = g_{i_1}^{\sigma_1} \ldots g_{i_k}^{\sigma_k}$ be a nonempty cyclically reduced word.

For every $i$, take $f_i = \Lambda^d g_i$. Let us check that we may apply Lemma \ref{product_proximal}. Indeed:
\begin{itemize}
\item By Lemma \ref{pseudoh-to-prox} (i), $F = (f_1, \ldots, f_n)$ is an independent proximal system. (Conditions (i) and (ii) follow, respectively, from the first and second part of Lemma \ref{pseudoh-to-prox} (i).)
\item By Lemma \ref{pseudoh-to-prox} (ii), we have $\eta(F) \leq \eta(\eps)$; in other words, $F$ is $\eta(\eps)$-separated. We set $\eta = \eta(\eps)$: then "$\ll_\eta$" always implies "$\ll_\eps$".
\item Without loss of generality, we may suppose $s(G) < 1$. Then by Lemma \ref{pseudoh-to-prox} (iii), we have $\hat{s}(F) \ll_\eps s(G)$, which is in turn no greater than $s_1(\eps)$. If we choose $s_1(\eps)$ sufficiently small (since $\eta$ is entirely determined by $\eps$), we then have
\[\hat{s}(F) \leq \hat{s}(\eta).\]
\end{itemize}

Now let us deduce the conclusions of the Proposition \ref{product_pseudohyperbolic} from the conclusions of Lemma \ref{product_proximal}, applied to the word $\Lambda^d g = f_{i_1}^{\sigma_1} \ldots f_{i_k}^{\sigma_k}$:
\begin{itemize}
\item That $g$ is pseudohyperbolic follows from Lemma \ref{pseudoh-to-prox} (i).

\item Let us show that
\[\alpha^\mathrm{Haus}(V_{\subg}(g),\; V_{\subg}(g_{i_1}^{\sigma_1})) \ll_\eps s(G).\]
Indeed, we have:
\begin{align*}
\alpha^\mathrm{Haus}(V_{\subg}(g),\; V_{\subg}(g_{i_1}^{\sigma_1}))
  &\ll      \alpha(\Lambda^d V_{\subg}(g),\; \Lambda^d V_{\subg}(g_{i_1}^{\sigma_1})) &&\text{by Lemma \ref{pseudoh-to-prox} (iv)}\\
  &=        \alpha(V_s(\Lambda^d g),\; V_s(f_{i_1}^{\sigma_1}))           &&\text{by Lemma \ref{pseudoh-to-prox} (i)}\\
  &\ll_\eta \hat{s}(F)                                                    &&\text{by Lemma \ref{product_proximal}}\\
  &\ll_\eps s(G)                                                          &&\text{by Lemma \ref{pseudoh-to-prox} (iii);}\\
\end{align*}
and we know that "$\ll_\eta$" implies "$\ll_\eps$".

\item Let us show that $g$ is $\frac{\eps}{3}$-separated. Since $s(G) \leq s_1(\eps)$, we may choose $s_1(\eps)$ sufficiently small to deduce, from the previous point, the following inequality:
\[\alpha^\mathrm{Haus}(V_{\subg}(g),\; V_{\subg}(g_{i_1}^{\sigma_1})) \leq \frac{\eps}{3}.\]
Replacing $g$ by $g^{-1}$, we get similarly
\[\alpha^\mathrm{Haus}(V_{\subl}(g),\; V_{\subl}(g_{i_k}^{\sigma_k})) \leq \frac{\eps}{3}.\]
Finally, since $g$ is cyclically reduced and $\mathcal{W}$ is $\eps$-separated, we know that
\[\alpha(V_{\subg}(g_{i_1}^{\sigma_1}),\; V_{\subl}(g_{i_k}^{\sigma_k})) \geq \eps.\]
From these three inequalities, it follows that
\begin{equation}
\alpha(V_{\subl}(g), V_{\subg}(g)) \geq \frac{\eps}{3}.
\end{equation}

\item Let us show that $g$ is 1-contracting. Using Lemma \ref{product_proximal} and Lemma \ref{pseudoh-to-prox} (iii), we get
\begin{align*}
s(g) &\ll_{\eps(g)} \hat{s}(\Lambda^d g) &&\text{by Lemma \ref{pseudoh-to-prox} (iii)}\\
     &\ll_\eta \hat{s}(F)                &&\text{by Lemma \ref{product_proximal}}\\
     &\ll_\eps s(G)                      &&\text{by Lemma \ref{pseudoh-to-prox} (iii) (since $s(G) < 1$.}\\
\end{align*}
Since $\eps(g) \geq \frac{\eps}{3}$ and $\eta = \eta(\eps)$, we get $s(g) \ll_\eps s(G) \leq s_1(\eps)$. If we take $s_1(\eps)$ sufficiently small, we deduce that
\[s(g) < 1.\]
\end{itemize}
\qed
\end{proof}

\section{The "tennis ball" and generalized Schottky groups}
\label{sec:tennis_ball}

Let $\eps > 0$, and let $\mathcal{V}$ be a frame.
\begin{definition}
\label{tennis_ball_domains}
We define, on the sphere $\mathbb{S}(\mathbb{R}^{d+1,d})$ (from now on simply referred to as $\mathbb{S}$), the following domains:
\[\begin{cases}
\mathcal{H}_\mathbb{S}^- := B_{N_\mathcal{V}}(\pi_\mathbb{S}(V_{\subl}^{\halfperp}), \eps) \\
\mathcal{H}_\mathbb{S}^+ := B_{N_\mathcal{V}}(\pi_\mathbb{S}(V_{\subg}^{\halfperp}), \eps).
\end{cases}\]
(Of course, they depend on $\mathcal{V}$ and $\eps$, but to simplify the notations, we shall leave this dependence implicit.) We call them \emph{tennis ball domains} (to understand why, draw them for $d=1$).
\end{definition}

In the following Proposition and its proof, we work in metric given by $N_{\mathcal{V}(g)}$.
\begin{proposition}
\label{tennis_ball}
For every $\eps > 0$, there is a constant $s_2(\eps)$ such that for any $s_2(\eps)$-contracting pseudohyperbolic map $g$ (with frame $\mathcal{V}$), we have
\begin{subequations}
\label{eq:tennis_ball}
  \begin{equation}
  \label{eq:tb1}
    g_\mathbb{S} \left( \mathbb{S} \setminus
       \overline{\mathcal{H}_\mathbb{S}^-} \right)
    \subset      \mathcal{H}_\mathbb{S}^+
  \end{equation}
  \begin{equation}
  \label{eq:tb2}
    g^{-1}_\mathbb{S} \left( \mathbb{S} \setminus
       \overline{\mathcal{H}_\mathbb{S}^+} \right)
    \subset      \mathcal{H}_\mathbb{S}^-.
  \end{equation}
\end{subequations}
\end{proposition}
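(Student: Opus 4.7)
The plan is to work throughout in the local metric $N_{\mathcal{V}(g)}$, in which the three spaces $V_{\subl}, V_{\sube}, V_{\subg}$ are pairwise orthogonal, so that any vector decomposes uniquely as $x = x_{\subl} + t e_{\sube} + x_{\subg}$ with the Pythagorean identity $\|x\|_{N_\mathcal{V}}^2 = \|x_{\subl}\|^2 + t^2 + \|x_{\subg}\|^2$. All angular computations below refer to this metric, which is also the one used to define the tennis ball domains.

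\emph{Step 1: closed-form distance to the wings.} Setting $\tau(s) := \maxof(s, 0)$, a direct Cauchy--Schwarz optimisation (maximising $\langle x, y \rangle / \|y\|$ over $y$ ranging in the wing) gives, for any unit vector $x$,
\[\sin^2 \alpha_{N_\mathcal{V}}(x, V_{\subl}^{\halfperp}) = \tau(t)^2 + \|x_{\subg}\|^2,
\qquad
\sin^2 \alpha_{N_\mathcal{V}}(x, V_{\subg}^{\halfperp}) = \tau(-t)^2 + \|x_{\subl}\|^2.\]
The $\tau$ appears because the optimal $y$ qualitatively depends on whether the $e_{\sube}$-coordinate of $x$ has the ``good'' or ``bad'' sign relative to each wing. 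Consequently, $x \in \mathbb{S} \setminus \overline{\mathcal{H}_\mathbb{S}^-}$ is equivalent to $\tau(t)^2 + \|x_{\subg}\|^2 > \sin^2 \eps$, and the conclusion $g(x) \in \mathcal{H}_\mathbb{S}^+$ becomes the strict inequality $\tau(-t)^2 + \|y_{\subl}\|^2 < \sin^2 \eps \cdot \|y\|^2$, where $y := g(x) = g_{\subl}(x_{\subl}) + t e_{\sube} + g_{\subg}(x_{\subg})$ (so that $y_{\sube} = t$ is preserved).

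\emph{Step 2: case analysis on the sign of $t$.} Using $\|y_{\subl}\| \leq s\|x_{\subl}\| \leq s$ and $\|y_{\subg}\| \geq s^{-1}\|x_{\subg}\|$, I would bound the ratio in two cases. If $t > 0$, then $\tau(-t) = 0$ and the numerator collapses to $\|y_{\subl}\| \leq s$; for $s \leq 1$ the hypothesis forces
\[\|y\|^2 \geq t^2 + s^{-2}\|x_{\subg}\|^2 \geq \tau(t)^2 + \|x_{\subg}\|^2 > \sin^2 \eps,\]
so the ratio is at most $s/\sin\eps$. If $t \leq 0$, the hypothesis reduces to $\|x_{\subg}\|^2 > \sin^2\eps$, so the $\subg$-component alone inflates the denominator to $\|y\|^2 \geq s^{-2}\sin^2 \eps$, while the numerator satisfies $\tau(-t)^2 + \|y_{\subl}\|^2 \leq t^2 + s^2 \leq 1 + s^2 \leq 2$; the ratio is bounded by $\sqrt{2}\,s/\sin\eps$. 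In either case the bound is $\leq C\,s/\sin\eps$ for an absolute constant $C$, so declaring $s_2(\eps) := \sin^2(\eps)/(2C)$ enforces \eqref{eq:tb1}.

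\emph{Step 3: derive \eqref{eq:tb2} by symmetry.} Passing to $g^{-1}$ swaps the frame components $V_{\subl} \leftrightarrow V_{\subg}$; from Definition \ref{direction} one checks that the new distinguished unit vector is $-e_{\sube}$, hence the two wings interchange and $\mathcal{H}_\mathbb{S}^{\pm}(g^{-1}) = \mathcal{H}_\mathbb{S}^{\mp}(g)$. Since Proposition \ref{frame_and_dynamical_part} gives $(g^{-1})_{\subl} = g_{\subg}^{-1}$ and $(g^{-1})_{\subg} = g_{\subl}^{-1}$, we have $s(g^{-1}) = s(g)$; so \eqref{eq:tb1} applied to $g^{-1}$ is exactly \eqref{eq:tb2} for $g$. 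The main obstacle is Step 1: obtaining the compact sine formulas and correctly tracking the two qualitative regimes of the $e_{\sube}$-coordinate; once this is in place, the rest is a purely quantitative estimate made transparent by the orthogonal structure of the local metric.
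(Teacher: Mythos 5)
Your proposal is correct and follows essentially the same route as the paper: your Step 1 sine formula with the sign dichotomy on the $e_{\sube}$-coordinate is exactly the content of Lemma \ref{banana_description} (stated there with tangents via the reduction to the distance to $V_{\subg}$ or $V_{\subge}$), your Step 2 is the same contraction estimate in the local metric $N_{\mathcal{V}}$ with an explicit admissible $s_2(\eps)$, and your Step 3 is the paper's Remark \ref{tb12}. The only differences are cosmetic (sines versus tangents, and a slightly different sufficient value of $s_2(\eps)$), so no changes are needed.
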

\begin{remark}
Since we work here in metric given by $N_\mathcal{V}$, the separation of $g$ does not matter and $\eps$ has nothing to do with it. Instead $\eps$ defines the "aperture" of the tennis ball domains $\mathcal{H}_\mathbb{S}^\pm$.
\end{remark}
\begin{remark}
\label{tb12}
As $g_\mathbb{S}$ is a homeomorphism and the domains under consideration are regular, these two relations are actually equivalent. Also, since $V_{\subl}(g^{-1}) = V_{\subg}$, $V_{\subg}(g^{-1}) = V_{\subl}$ and $s(g^{-1}) = s(g)$, \eqref{eq:tb2} is nothing else than \eqref{eq:tb1} applied to $g^{-1}$.
\end{remark}

Let $\eps > 0$ and $g$ be a pseudohyperbolic map with frame $\mathcal{V}$. As previously done, for $x \in \mathbb{R}^{d+1,d}$, we define the triple $(x_{\subl}, x_{\sube}, x_{\subg}) \in V_{\subl} \times V_{\sube} \times V_{\subg}$ such that $x_{\subl} + x_{\sube} + x_{\subg} = x$; these are the $N_\mathcal{V}$-orthogonal projections of $x$ on the corresponding spaces. The vector $e_{\sube}$ (see Definition \ref{direction}) gives an orientation on $V_{\sube}$, which allows us to define an order on this 1-dimensional space: we say that $x_{\sube} \geq y_{\sube}$ iff $\langle x_{\sube}, e_{\sube} \rangle \geq \langle y_{\sube}, e_{\sube} \rangle$.

\begin{lemma}
\label{banana_description}
Let $x \in \mathbb{R}^{d+1,d} \setminus \{0\}$. Then we have:
\begin{subequations}
\label{eq:banana_description}
\begin{equation}
\label{eq:bd1}
\pi_\mathbb{S}(x) \in B (\pi_\mathbb{S}(V_{\subl}^{\halfperp}), \eps)
\iff \begin{cases}
     x_{\sube} \leq 0 \text{ \rm and } \frac{\|x_{\subg}\|}{\|x_{\subl} + x_{\sube}\|} < \tan \eps \\
     \text{\rm or} \\
     x_{\sube} \geq 0 \text{ \rm and } \frac{\|x_{\subg} + x_{\sube}\|}{\|x_{\subl}\|} < \tan \eps
     \end{cases}
\end{equation}
and
\begin{equation}
\label{eq:bd2}
\pi_\mathbb{S}(x) \in B (\pi_\mathbb{S}(V_{\subg}^{\halfperp}), \eps)
\iff \begin{cases}
     x_{\sube} \leq 0 \text{ \rm and } \frac{\|x_{\subl} + x_{\sube}\|}{\|x_{\subg}\|} < \tan \eps \\
     \text{\rm or} \\
     x_{\sube} \geq 0 \text{ \rm and } \frac{\|x_{\subl}\|}{\|x_{\subg} + x_{\sube}\|} < \tan \eps
     \end{cases}
\end{equation}
\end{subequations}
(and by replacing everywhere "$< \tan \eps$" by "$\leq \tan \eps$", we may characterize in a similar way the closures of these domains.)
\end{lemma}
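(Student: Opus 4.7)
The plan is to compute directly, in the $N_\mathcal{V}$-metric, the angular distance from $x$ to each wing, exploiting the fact that $V_{\subl}$, $V_{\sube}$, $V_{\subg}$ are mutually $N_\mathcal{V}$-orthogonal. I will treat the first wing in detail; the second follows by the same argument with the roles of $V_{\subl}$ and $V_{\subg}$ (and the sign of $e_{\sube}$) interchanged. Recall from Definition \ref{direction} that $V_{\subl}^{\halfperp} = V_{\subl} - \mathbb{R}^{\geq 0} e_{\sube}$, so a unit vector $y \in V_{\subl}^{\halfperp}$ can be parametrized as $y = (\cos\phi)\hat v - (\sin\phi) e_{\sube}$ with $\hat v$ a unit vector in $V_{\subl}$ and $\phi \in [0,\tfrac{\pi}{2}]$.

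Using the orthogonal decomposition $x = x_{\subl} + x_{\sube} + x_{\subg}$ and Cauchy--Schwarz on the $V_{\subl}$-component, the angular distance $\alpha(x, V_{\subl}^{\halfperp})$ is obtained as $\arccos$ of $\|x\|^{-1}\max_{\hat v, \phi}\langle x, y\rangle$, where
\[
\langle x, y\rangle = \cos\phi\,\langle x_{\subl}, \hat v\rangle - \sin\phi\,\langle x_{\sube}, e_{\sube}\rangle.
\]
The maximum over $\hat v$ is achieved at $\hat v = x_{\subl}/\|x_{\subl}\|$ (the degenerate case $x_{\subl}=0$ follows by continuity), reducing the problem to maximizing $\|x_{\subl}\|\cos\phi - c\sin\phi$ on $[0,\tfrac{\pi}{2}]$, where $c := \langle x_{\sube}, e_{\sube}\rangle$. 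This scalar optimization is the source of the case split in the conclusion: if $x_{\sube}\leq 0$, i.e.\ $c \leq 0$, the maximum is attained at the interior critical point $\tan\phi^\ast = -c/\|x_{\subl}\|$ and equals $\sqrt{\|x_{\subl}\|^2 + c^2} = \|x_{\subl}+x_{\sube}\|$; if $x_{\sube}\geq 0$, the derivative $-\|x_{\subl}\|\sin\phi - c\cos\phi$ is everywhere nonpositive, so the maximum is $\|x_{\subl}\|$, attained at $\phi=0$.

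Dividing by $\|x\|$ gives $\cos\alpha(x, V_{\subl}^{\halfperp})$ in each case, and Pythagoras applied to $\|x\|^2 = \|x_{\subl}\|^2+\|x_{\sube}\|^2+\|x_{\subg}\|^2$ turns $\sin\alpha\cdot\|x\|$ into $\|x_{\subg}\|$ and $\|x_{\subg}+x_{\sube}\|$ respectively. Dividing $\sin\alpha$ by $\cos\alpha$ then produces exactly the tangents $\|x_{\subg}\|/\|x_{\subl}+x_{\sube}\|$ and $\|x_{\subg}+x_{\sube}\|/\|x_{\subl}\|$ from \eqref{eq:bd1}. The equivalence $\alpha < \eps \Leftrightarrow \tan\alpha < \tan\eps$ on $[0,\tfrac{\pi}{2})$ completes \eqref{eq:bd1}, and replacing strict by non-strict inequalities throughout handles the closures.

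For \eqref{eq:bd2}, the only change is that $V_{\subg}^{\halfperp} = V_{\subg} + \mathbb{R}^{\geq 0}e_{\sube}$ carries the opposite sign convention on $e_{\sube}$, so the parametrization becomes $y = (\cos\phi)\hat w + (\sin\phi)e_{\sube}$ with $\hat w$ a unit vector in $V_{\subg}$. Repeating the same argument, the interior versus boundary dichotomy in $\phi$ now flips: the interior optimum occurs when $x_{\sube}\geq 0$ and the boundary optimum when $x_{\sube}\leq 0$, which is precisely the arrangement of cases in \eqref{eq:bd2}. There is no genuine obstacle here; the only step requiring care is correctly identifying when the constrained maximum in $\phi$ sits at an interior critical point versus at the endpoint $\phi=0$, which is exactly what produces the bifurcation in the stated formulas.
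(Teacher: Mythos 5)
Your proof is correct and takes essentially the same route as the paper: a nearest-point computation on the wing in the $N_\mathcal{V}$-metric with the identical case split on the sign of $x_{\sube}$. The paper merely packages your optimization as the identities $\alpha(x, V_{\subg}^{\halfperp}) = \alpha(x, V_{\subge})$ for $x_{\sube} \geq 0$ and $\alpha(x, V_{\subg}^{\halfperp}) = \alpha(x, V_{\subg})$ for $x_{\sube} \leq 0$, followed by the $\arctan$ formula for the distance to a linear subspace, which is exactly what your interior versus endpoint ($\phi = 0$) dichotomy encodes.
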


\begin{proof}
Without loss of generality, let us concentrate on \eqref{eq:bd2} (the other statement follows simply by interchanging $V_{\subl}$ and $V_{\subg}$ and swapping the orientation of $V_{\sube}$.) Remember that $x \in V_{\subg}^{\halfperp}$ iff $x_{\subl} = 0$ and $x_{\sube} > 0$.
\begin{itemize}
\item Suppose $x_{\sube} \geq 0$. As $V_{\subg}^{\halfperp} \subset V_{\subge}$, we have $\alpha(x, V_{\subg}^{\halfperp}) \geq \alpha(x, V_{\subge})$. On the other hand, we have $\alpha(x, V_{\subge}) = \alpha(x, x_{\subg} + x_{\sube})$ and $x_{\subg} + x_{\sube} \in V_{\subg}^{\halfperp}$, which shows the opposite inequality. Hence $\alpha(x, V_{\subg}^{\halfperp}) = \alpha(x, V_{\subge})$.
\item Suppose $x_{\sube} \leq 0$; without loss of generality, we may assume that $\|x\| = 1$. Since $V_{\subg} \subset V_{\subg}^{\halfperp}$, obviously $\alpha(x, V_{\subg}^{\halfperp}) \leq \alpha(x, V_{\subg})$. Now let $y \in V_{\subg}^{\halfperp}$; then we have:
\begin{align*}
\cos \alpha(x, y) &=    \frac{\langle x, y \rangle}{\|y\|} \\
             &=    \frac{\langle x_{\subl} + x_{\subg} + x_{\sube},\; y_{\subg} + y_{\sube} \rangle}{\|y_{\subg} + y_{\sube}\|} \\
             &=    \frac{\langle x_{\subg}, y_{\subg} \rangle + \langle x_{\sube}, y_{\sube} \rangle}{\|y_{\subg} + y_{\sube}\|} \\
             &\leq \frac{\langle x_{\subg}, y_{\subg} \rangle}{\|y_{\subg}\|} \\
             &=    \cos \alpha(x, y_{\subg}),
\end{align*}
since $\langle x_{\sube}, y_{\sube} \rangle \leq 0$ and $\|y_{\subg} + y_{\sube}\| \geq \|y_{\subg}\|$. Hence $\alpha(x, y) \geq \alpha(x, y_{\subg})$, with $y_{\subg} \in V_{\subg}$. This shows the opposite inequality. Hence $\alpha(x, V_{\subg}^{\halfperp}) = \alpha(x, V_{\subg})$.
\end{itemize}
The result now follows from the fact that for any vector subspace $E \subset \mathbb{R}^{d+1,d}$, we have
\[\alpha(x, E) = \alpha(x, x_E) = \arccos \frac{\|x_E\|}{\|x\|} = \arctan \frac{\|x - x_E\|}{\|x_E\|},\]
where $x_E$ is the $N_\mathcal{V}$-orthogonal projection of $x$ onto $E$.
\qed
\end{proof}

\begin{proof}[of Proposition \ref{tennis_ball}]
By virtue of Remark \ref{tb12}, it is enough to show \eqref{eq:tb1}. Let $x \in \mathbb{R}^{d+1,d} \setminus \{0\}$ such that $\alpha(x, V_{\subl}^{\halfperp}) > \eps$; it is enough to prove that if $s(g) \leq s_2(\eps)$ (for a value of $s_2(\eps)$ to be specified later), we have $\alpha(g(x), V_{\subg}^{\halfperp}) < \eps$.

Suppose that $x_{\sube} \leq 0$. Then we have, by Lemma \ref{banana_description},
\[
\frac{\|x_{\subg}\|}{\|x_{\subl} + x_{\sube}\|} > \tan \eps.
\]
We deduce that
\begin{align*}
\frac{\|g(x)_{\subl} + g(x)_{\sube}\|^2}{\|g(x)_{\subg}\|^2} &= \frac{\|g(x_{\subl})\|^2 + \|g(x_{\sube})\|^2}{\|g(x_{\subg})\|^2} \\
                                           &\leq \frac{s(g)^2\|x_{\subl}\|^2 + \|x_{\sube}\|^2}{s(g)^{-2}\|x_{\subg}\|^2} \\
                                           &\leq s(g)^2\frac{\|x_{\subl}\|^2 + \|x_{\sube}\|^2}{\|x_{\subg}\|^2} \\
                                           &\leq s(g)^2(\tan \eps)^{-2} \\
                                           &<    (\tan \eps)^2,
\end{align*}
provided that $s(g) < (\tan \eps)^4$, which is true if we take $s_2(\eps)$ to be smaller than this value. Hence $\alpha(g(x), V_{\subg}) < \eps$. On the other hand, we have $g(x)_{\sube} = g(x_{\sube}) = x_{\sube} \leq 0$. It follows that $\alpha(g(x), V_{\subg}^{\halfperp}) < \eps$. In the case where $x_{\sube} \geq 0$, a completely analogous calculation yields the same result.
\qed
\end{proof}

Now consider a frameset $\mathcal{W} = (\mathcal{V}_1, \ldots, \mathcal{V}_n)$ and a set of radii $\eps_1, \ldots, \eps_n$.
\begin{definition}
\label{tennis_ball_sets}
Just as in Definition \ref{tennis_ball_domains}, we define for every index $i$ the domains
\[\begin{cases}
\mathcal{H}_{\mathbb{S}, i}^- := B_{N_{\mathcal{V}_i}}(\pi_\mathbb{S}(V_{i, \subl}^{\halfperp}), \eps_i) \\
\mathcal{H}_{\mathbb{S}, i}^+ := B_{N_{\mathcal{V}_i}}(\pi_\mathbb{S}(V_{i, \subg}^{\halfperp}), \eps_i).
\end{cases}\]
(Once again, they depend on $\mathcal{W}$ and the $\eps_i$, but to simplify the notations, we keep this dependence implicit.)
Let $G =\,<g_1, \ldots, g_n>$ be any group based on $\mathcal{W}$. If the sets $\overline{\mathcal{H}_{\mathbb{S}, i}^\pm}$ are pairwise disjoint and for every $i$, $s(g_i)$ is small enough to apply Proposition \ref{tennis_ball}, we say that $G$ is \emph{$(\eps_1, \ldots, \eps_n)$-Schottky}.
\end{definition}

In this case, it follows from Proposition \ref{tennis_ball} that $G$ is free. Indeed, we have for every $i$:
\begin{equation}
\label{eq:sph_ping_pong}
\begin{cases}
g_i      \left( \mathbb{S} \setminus \overline{\mathcal{H}_{\mathbb{S}, i}^-} \right)
  \subset \mathcal{H}_{\mathbb{S}, i}^+ \\
g_i^{-1} \left( \mathbb{S} \setminus \overline{\mathcal{H}_{\mathbb{S}, i}^+} \right)
  \subset \mathcal{H}_{\mathbb{S}, i}^-,
\end{cases}
\end{equation}
and we may apply the ping-pong lemma (see for example \cite{Tits72}, Proposition 1.1).

\section{Affine deformations}
\label{sec:affine}

\begin{definition}
Let $G \subset SO(d+1, d)$ be any linear group. An \emph{affine deformation} of $G$ is any group $\Gamma \subset \mathbb{R}^{d+1,d} \rtimes SO(d+1, d)$ such that the canonical projection $L: \mathbb{R}^{d+1,d} \rtimes SO(d+1, d) \to SO(d+1, d)$ induces an isomorphism from $\Gamma$ to $G$. In other terms, it is a group of affine transformations that does not contain pure translations and whose linear parts form the group $G$.
\end{definition}

Now suppose $G =\,<g_1, \ldots, g_n>$ is a free group; let $\Gamma$ be any affine deformation of $G$. Then it is generated by the elements $\gamma_1, \ldots, \gamma_n$ whose linear parts are $g_1, \ldots, g_n$, respectively. This means that, $G$ being fixed, $\Gamma$ is entirely determined by the translational parts of its generators, namely the vectors $\gamma_1(0), \ldots, \gamma_n(0)$. Reciprocally, for any family of vectors ${\bf t} = (t_1, \ldots, t_n) \in \left(\mathbb{R}^{d+1, d}\right)^n$, we may define $\gamma_1, \ldots, \gamma_n$ by $\gamma_i(x) = g_i(x) + t_i$ for all $i$. Since $G$ is free, the group generated by these elements is then an affine deformation of $G$, that we shall call $G({\bf t})$. This defines a bijection between the set of all affine deformations of $G$ and $\left(\mathbb{R}^{d+1, d}\right)^n$.

We may now state the Main Theorem more precisely:
\begin{theorem}
\label{main_theorem}
For every $\eps > 0$, there is a constant $s_3(\eps)$ with the following property. Let $\mathcal{W}$ be any $\eps$-separated frameset, $G$ any $s_3(\eps)$-contracting group based on $\mathcal{W}$. Then we can say that:
\begin{enumerate}[(i)]
\item The group $G$ is free;
\item There is a nonempty open set ${\bf T} \subset \left(\mathbb{R}^{d+1, d}\right)^n$ (depending on $G$) such that for every ${\bf t} \in {\bf T}$, the affine deformation $G({\bf t})$ acts properly discontinuously on $\mathbb{R}^{d+1, d}$;
\item For ${\bf t} \in {\bf T}$, the quotient space $\mathbb{R}^{d+1, d}/G({\bf t})$ is homeomorphic to a solid $(2d+1)$-dimensional handlebody with $n$ handles.
\end{enumerate}
\end{theorem}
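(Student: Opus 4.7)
\medskip
\noindent\textbf{Proof proposal.}
Part (i) is immediate: taking $s_3(\eps) \leq s_1(\eps)$, Proposition~\ref{product_pseudohyperbolic} makes every nonempty cyclically reduced word pseudohyperbolic (hence nontrivial), so $G$ is free of rank $n$. For (ii) and (iii), the plan is to lift the tennis-ball construction of Section~\ref{sec:tennis_ball} from $\mathbb{S}(\mathbb{R}^{d+1,d})$ to the affine space. Concretely, for each generator $\gamma_i = g_i + t_i$ I choose basepoints $p_i^\pm \in \mathbb{R}^{d+1,d}$, depending on $t_i$ and on $\mathcal{V}_i$ and playing the role of affine source and sink of $\gamma_i$, and define affine cones
\[
\mathcal{H}_i^\pm \;:=\; p_i^\pm + \mathbb{R}^{>0}\cdot \widetilde{\mathcal{H}_{\mathbb{S},i}^\pm},
\]
lifts of the spherical tennis-ball domains of Definition~\ref{tennis_ball_sets}. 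The candidate fundamental region is
\[
\mathcal{H}^0 \;:=\; \mathbb{R}^{d+1,d}\setminus \bigcup_{i=1}^n \bigl(\overline{\mathcal{H}_i^-}\cup \overline{\mathcal{H}_i^+}\bigr),
\]
and ${\bf T}$ is the set of parameters ${\bf t}$ for which (a) these $2n$ closed cones are pairwise disjoint and (b) the affine ping-pong conditions $\gamma_i(\mathbb{R}^{d+1,d}\setminus\overline{\mathcal{H}_i^-})\subset \mathcal{H}_i^+$ and their inverses hold. Condition (b) holds "at infinity" thanks to Proposition~\ref{tennis_ball}; one adjusts $t_i$ along the neutral direction $e_{i,\sube}$ to move the apices $p_i^\pm$ apart, so that (a) and the finite-part correction in (b) are satisfied on a nonempty open subset of $(\mathbb{R}^{d+1,d})^n$ as soon as $s_3(\eps)$ is small enough relative to the $\eps$-separation of $\mathcal{W}$.

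The proof that $\mathcal{H}^0$ is a fundamental domain for $\Gamma = G({\bf t})$ then splits, following Section~4 of \cite{CG00}, in two halves. The easy half: iterating (b) along any reduced word $\gamma_{i_1}^{\sigma_1}\cdots\gamma_{i_k}^{\sigma_k}$ places its image in a nested sequence of cones terminating in $\mathcal{H}_{i_1}^{\sigma_1}$, so tiles attached to distinct reduced words are pairwise disjoint and compact sets meet only finitely many of them, giving both injectivity of the tiling and proper discontinuity of $\Gamma$.

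The hard half, and the main obstacle, is the covering statement $\bigcup_{\gamma\in\Gamma}\gamma(\overline{\mathcal{H}^0})=\mathbb{R}^{d+1,d}$ (which will be Proposition~\ref{fundamental_region}). I argue by contradiction: if $x$ is uncovered, iteratively peeling off inverse generators produces an infinite reduced sequence $(i_k,\sigma_k)_{k\geq 1}$ together with a nested chain of cones $C_k := (\gamma_{i_1}^{\sigma_1}\cdots\gamma_{i_k}^{\sigma_k})(\overline{\mathcal{H}_{i_{k+1}}^{\sigma_{k+1}}})$, each containing $x$. Proposition~\ref{product_pseudohyperbolic} controls the $k$-th partial product: it is $\tfrac{\eps}{3}$-separated, $1$-contracting pseudohyperbolic, and its attracting MTIS Hausdorff-approaches $V_{\subg}(g_{i_1}^{\sigma_1})$ at rate $O(s(G)^k)$. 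On the affine side, the apex of $C_k$ is $(\gamma_{i_1}^{\sigma_1}\cdots\gamma_{i_k}^{\sigma_k})(p_{i_{k+1}}^{\sigma_{k+1}})$; its neutral-direction component drifts linearly in $k$ while its hyperbolic components stay bounded, so $C_k$ retreats to infinity along $V_{\subg}(g_{i_1}^{\sigma_1})$ with bounded angular aperture. This forces $x\notin C_k$ for large $k$, a contradiction. Adapting the linear-drift estimate of \cite{CG00} to the higher-dimensional setting, using the quantitative bounds of Section~\ref{sec:group} and Lemma~\ref{uniformly_equivalent}, is the main technical work.

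Part (iii) follows once $\mathcal{H}^0$ is known to be a fundamental domain: $\overline{\mathcal{H}^0}$ is the complement in $\mathbb{R}^{2d+1}$ of $2n$ open cones, which deformation-retracts onto a $(2d+1)$-ball with $2n$ disjoint marked $2d$-disks on its boundary, and the generators $\gamma_i$ identify the disk bounding $\mathcal{H}_i^-$ with the disk bounding $\mathcal{H}_i^+$ homeomorphically. The quotient is by definition a solid $(2d+1)$-dimensional handlebody of genus $n$.
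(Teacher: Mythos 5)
Your overall architecture matches the paper's: affine cones over the tennis-ball domains with apices displaced along $e_{i,\sube}$, the set ${\bf T}$ cut out by disjointness of the closed cones, the complement $\mathcal{H}^0$ as candidate fundamental domain, disjointness of tiles by affine ping-pong, the covering statement by contradiction via a nested sequence of regions around an uncovered point, and the handlebody identification for (iii). (One small overclaim on the easy side: disjointness of the tiles alone does not yield proper discontinuity of the action on all of $\mathbb{R}^{d+1,d}$ --- a neighborhood of a hypothetical uncovered point could meet infinitely many tiles --- so proper discontinuity really rests on the covering half as well.)

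The genuine gap is in the covering half, which is the heart of the theorem and which you explicitly defer as ``the main technical work.'' The escape-to-infinity mechanism you describe is both unproved and, as formulated, aimed in the wrong direction. First, the translational part of $\gamma^{[k]}$ is \emph{not} bounded in the hyperbolic directions (early translation terms are expanded by the subsequent linear parts), so ``hyperbolic components stay bounded'' fails in general. Second, even granting a drift of the apex ``along $V_{\subg}(g_{i_1}^{\sigma_1})$,'' this cannot force $x \notin C_k$: the regions $\mathcal{H}_i^{+}$ contain the full lines $\mathbb{R}v$, $v \in V_{\subg}$, through their apex (the positive wing contains $V_{\subg}$ itself), so translating the apex to infinity along an isotropic direction does not make the region leave a compact set. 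The paper's Lemma \ref{gap_bounded_below} circumvents exactly this: progress is measured not at the apex and not along $V_{\subg}$, but along the spacelike half-line $\Delta = S \cap V_{1,\subg}^{\halfperp}$ in the slice $x_0 + S$, where each nested region lies above a hyperplane parallel to $P$; the upward advance per step is bounded below by a uniform $\delta_{\min}>0$, using the $1$-contraction of cyclically reduced products on $V_{\subge}(g^{[k]})$ (Proposition \ref{product_pseudohyperbolic} plus Lemma \ref{uniformly_equivalent}), the $\pi/4$ angle control of Lemma \ref{angle_control}, and the positive minimal distance $d_{\min}$ between the closed regions. Note also that Proposition \ref{product_pseudohyperbolic} applies only to \emph{cyclically} reduced words, whereas your partial products $\gamma^{[k]}$ are merely reduced; the paper handles this by replacing $x_0$ with a suitable $\gamma_i^{\sigma}(x_0)$ so that cyclically reduced prefixes occur infinitely often, and only claims the drift bound at those steps. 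Without this quantitative per-step estimate and the cyclic-reducedness bookkeeping, part (ii) is not established.
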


\begin{proof}
We begin by giving a few definitions and notations, to be fixed for the remainder of this section.
\begin{itemize}
\item We fix $\eps > 0$, $\mathcal{W}$ an $\eps$-separated frameset, $G$ an $s_3(\eps)$-contracting group based on $\mathcal{W}$. We will determine the value of $s_3(\eps)$ in the course of the proof.
\item For every $i$, we choose a constant $\eps_i > 0$ such that for any set $X \subset \mathbb{S}$, we have
\begin{equation}
\label{eq:eps_i_and_eps}
B_{N_{\mathcal{V}_i}}(X, \eps_i) \subset B_{N_0}(X, \frac{\eps}{3}).
\end{equation}
We define the tennis-ball domains $\mathcal{H}_{\mathbb{S}, i}^\sigma$ accordingly (see definition \ref{tennis_ball_sets}).
\item For any ${\bf t} \in \left(\mathbb{R}^{d+1, d}\right)^n$, for any $i$ such that $1 \leq i \leq n$ and $\sigma = \pm 1$, we introduce the following domain. It is a subset of $\mathbb{R}^{d+1, d}$ constructed as a cone, whose apex depends on the translational part and whose base is the corresponding tennis-ball domain:
\[\mathcal{H}_i^\sigma({\bf t}) := \pi_\mathbb{S}^{-1}(\mathcal{H}_{\mathbb{S}, i}^\sigma) + \sigma u_i,\]
where $u_i$ is the solution to the equation $u_i + g_i(u_i) = t_i$. (Since $g_i$ is pseudohyperbolic, it does not have $-1$ as an eigenvalue, so that the equation has indeed a unique solution.)
\item For any such ${\bf t}$, we also introduce, for every index $i$, the domains 
\[\begin{cases}
\mathcal{\tilde{H}}_i^-({\bf t}) := \mathcal{H}_i^-({\bf t}) \\
\mathcal{\tilde{H}}_i^+({\bf t}) := \gamma_i \left(\mathbb{R}^{d+1,d} \setminus \overline{\mathcal{H}_i^-({\bf t})}\right)
\end{cases}\]
(where $\gamma_i: x \mapsto g_i(x) + t_i$ is the $i$-th generator of the affine deformation $G({\bf t})$: it depends implicitly on ${\bf t}$.) We also introduce the domain
\[\mathcal{H}^0 := \mathbb{R}^{d+1,d} \setminus \bigcup_{i=1}^n \bigcup_{\sigma = \pm} \overline{\mathcal{\tilde{H}}_i^\sigma}.\]
\item We define ${\bf T}$ to be the set of all ${\bf t} \in \left(\mathbb{R}^{d+1, d}\right)^n$ such that the $2n$ sets $\overline{\mathcal{H}_i^\pm({\bf t})}$ are pairwise disjoint.
\end{itemize}
Now (i) follows immediately from either Proposition \ref{product_pseudohyperbolic}, or Proposition \ref{tennis_ball} combined with \eqref{eq:eps_i_and_eps}. The claim (ii) follows from Lemma \ref{regions_disjoint_nonempty}, Lemma \ref{regions_disjoint_open} and Proposition \ref{fundamental_region} below (since the existence of a fundamental domain is equivalent to proper discontinuity). The latter Proposition is interesting in its own right, as it describes the exact shape of the fundamental domain. It also allows us to prove (iii). Indeed, if the fundamental domain is $\mathcal{H}^0$, then the quotient space $\mathbb{R}^{d+1, d}/G({\bf t})$ is homeomorphic to the space obtained from $\overline{\mathcal{H}^0}$ by identifying for every $i$ the border of $\mathcal{\tilde{H}}_i^-$ with the border of $\mathcal{\tilde{H}}_i^+$. But clearly, the borders of $\mathcal{\tilde{H}}_i^-$ and $\mathcal{\tilde{H}}_i^+$ are homeomorphic to $\mathbb{R}^{2d}$ (or, if you wish, to $2d$-dimensional open "disks"), and $\mathcal{H}^0$ is homeomorphic to $\mathbb{R}^{2d+1}$ (or a $2d+1$-dimensional open ball).
\qed
\end{proof}

\begin{lemma}
\label{regions_disjoint_nonempty}
The set ${\bf T}$ is nonempty.
\end{lemma}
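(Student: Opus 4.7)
My plan is to trade the search for $\mathbf{t}$ for a search for $\mathbf{u} = (u_1,\ldots,u_n)$ under the linear bijection $t_i = u_i + g_i(u_i)$, which is indeed a bijection because each $g_i$, being pseudohyperbolic, has no eigenvalue $-1$ (its only unit-modulus eigenvalue is $+1$, carried by the one-dimensional $V_{\sube}(g_i)$). Abbreviating $\hat C_i^\sigma := \pi_\mathbb{S}^{-1}(\overline{\mathcal{H}_{\mathbb{S},i}^\sigma}) \cup \{0\}$, the lemma reduces to producing $\mathbf{u}$ for which the $2n$ closed cones $\hat C_i^\sigma + \sigma u_i$ are pairwise disjoint. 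Note that the $\hat C_i^\sigma$ are closed cones from the origin whose pairwise intersections reduce to $\{0\}$, since the sets $\overline{\mathcal{H}_{\mathbb{S},i}^\sigma}$ are pairwise disjoint compacta on the sphere.

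I would try $u_i := R\, e_{i,\sube}$ with $R > 0$ and $e_{i,\sube}$ as in Definition~\ref{direction}: by Proposition~\ref{wings_disjoint}, the ray $\sigma\,\mathbb{R}^{>0}e_{i,\sube}$ lies in the open wing $V_{i,\subg}^{\halfperp}$ for $\sigma = +$ (resp.\ $V_{i,\subl}^{\halfperp}$ for $\sigma = -$), so each apex $\sigma u_i$ sits in the interior of the support of the corresponding tennis-ball domain. Pairwise disjointness of the translates is equivalent to the Minkowski-difference condition $\tau u_j - \sigma u_i \notin \hat C_i^\sigma - \hat C_j^\tau$ for all $(i,\sigma) \neq (j,\tau)$. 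Since the right-hand side is a cone invariant under positive scaling, this condition is $R$-independent, so it suffices to verify at $R = 1$ that
\[\tau e_{j,\sube} - \sigma e_{i,\sube} \notin \hat C_i^\sigma - \hat C_j^\tau.\]

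When $i = j$ (and $\sigma = -\tau$), an explicit computation in the $N_{\mathcal{V}_i}$-orthogonal decomposition $\mathbb{R}^{d+1,d} = V_{i,\subl} \oplus V_{i,\sube} \oplus V_{i,\subg}$ shows that $x \in \hat C_i^+$, $y \in \hat C_i^-$ with $x - y = -2\,e_{i,\sube}$ would force the wing-neighborhood bounds (in the spirit of the $N_{\mathcal{V}_i}$-coordinate inequalities of Lemma~\ref{banana_description}) to require the transverse components of $x$ to simultaneously dominate and be dominated by its $e_{i,\sube}$-component --- impossible once $\eps_i < \pi/4$. When $i \neq j$, the analogous check uses the $\eps$-separation of the frameset together with Lemma~\ref{uniformly_equivalent} to guarantee that, for $\eps_i, \eps_j$ small enough, the angular cones $\hat C_i^\sigma$ and $\hat C_j^\tau$ stay far enough apart to exclude $\tau e_{j,\sube} - \sigma e_{i,\sube}$ from their Minkowski difference. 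The main subtlety I expect is precisely the $R$-invariance noted above: the translations cannot simply be taken ``large enough'' to force disjointness, so the thickened-wing geometry must be handled directly. Here Proposition~\ref{wings_disjoint} (which requires $d$ odd) is essential, since for even $d$ the two wings of a single frame share a half-line and no choice of $u_i$ could separate their angular neighborhoods; with $d$ odd and $\eps_i$ sufficiently small, the whole ray $R \mapsto R(e_{1,\sube},\ldots,e_{n,\sube})$ yields $\mathbf{u}$ in the desired set.
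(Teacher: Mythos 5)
Your choice of translations is exactly the paper's one ($u_i = e_{i,\sube}$, up to the scale $R$, which indeed is irrelevant), and both your reduction to the Minkowski-difference condition and your $i=j$ computation are sound. The genuine gap is in the $i \neq j$ case: angular separation of two closed cones never by itself excludes a fixed bounded vector from their Minkowski difference. In the plane, $C_1 = \mathbb{R}^{\geq 0}(1,0)$ and $C_2 = \mathbb{R}^{\geq 0}(\cos\delta, \sin\delta)$ are $\delta$-separated and arbitrarily thin, yet $(0,-h) = t\cos\delta\,(1,0) - t(\cos\delta,\sin\delta)$ with $t = h/\sin\delta$ lies in $C_1 - C_2$ for every $h>0$; translates of well-separated cones by arbitrarily small vectors can meet. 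What the separation (through \eqref{eq:eps_i_and_eps} and the $\eps$-separation of the frameset) does give you is that a hypothetical common point has norm $O(1/\eps)$, since two points at distance $\leq 2$ whose directions lie in the disjoint compacta $\overline{\mathcal{H}_{\mathbb{S},i}^\sigma}$ and $\overline{\mathcal{H}_{\mathbb{S},j}^\tau}$ cannot be too far from the origin. Inside that bounded region you still need an argument, and it must use the direction of $\tau e_{j,\sube} - \sigma e_{i,\sube}$ relative to the cones --- precisely the fact you state but never exploit, namely that each apex $\sigma e_{i,\sube}$ lies in the wing supporting its own cone: since the wing is a convex cone containing $\sigma e_{i,\sube}$, one has $V_{i,\subg}^{\halfperp} + e_{i,\sube} \subset V_{i,\subg}^{\halfperp}$ (and likewise for the $\subl$-wings), so the translated wings are pairwise disjoint and do not contain $0$, and a compactness argument in the ball then lets you thicken by sufficiently small $\eps_i$. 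So the statement is true and your plan is completable, but "the cones stay far enough apart" is not the operative mechanism, and as written that step does not follow.

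For comparison, the paper sidesteps the pair-by-pair analysis altogether: with the same $u_i$ it proves the single containment $\overline{\pi_\mathbb{S}^{-1}(\mathcal{H}_{\mathbb{S},i}^\sigma)} + \sigma e_{i,\sube} \subset \pi_\mathbb{S}^{-1}(\mathcal{H}_{\mathbb{S},i}^\sigma)$, by the same three-case computation in $N_{\mathcal{V}_i}$-coordinates (via Lemma \ref{banana_description}) that you carry out only for the pair $(i,+),(i,-)$. Pairwise disjointness of all $2n$ translated closed cones, including the cross pairs $i \neq j$, is then immediate from the disjointness of the untranslated open cones, and no smallness assumption on the $\eps_i$ beyond \eqref{eq:eps_i_and_eps} (your $\eps_i < \pi/4$) is needed. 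If you upgrade your $i=j$ computation to this absorption statement, your argument closes the gap and essentially becomes the paper's proof.
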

\begin{proof}
Recall that $e_{i, \sube}$ is the vector of unit norm $N_0$ fixed by $g_i$ with a suitably chosen sign (see Definition \ref{direction}). We set
\[{\bf t}_0 := (2e_{1, \sube}, \ldots, 2e_{n, \sube}).\]
Then we have, for every $i$, $u_i = e_{i, \sube}$, since by definition $g_i(e_{i, \sube}) = e_{i, \sube}$.

Let us show that ${\bf t}_0 \in {\bf T}$, that is, that the sets $\overline{\mathcal{H}_i^\pm({\bf t}_0)}$ are pairwise disjoint. To do this, we include them in the sets $\pi_\mathbb{S}^{-1}(\mathcal{H}_{\mathbb{S}, i}^\pm)$, that we already know to be pairwise disjoint.

Indeed, let us fix an index $i$ and a sign $\sigma$. In this proof, we work in metric given by $N_{\mathcal{V}_i}$. We need to show that:
\[
\overline{\mathcal{H}_i^\sigma({\bf t}_0)}
\; := \; \overline{\pi_\mathbb{S}^{-1}(\mathcal{H}_{\mathbb{S}, i}^\sigma)} + \sigma e_{i, \sube}
\; \subset \; \pi_\mathbb{S}^{-1}(\mathcal{H}_{\mathbb{S}, i}^\sigma).
\]
Suppose, without loss of generality, that $\sigma = +1$; let $x \in \overline{\pi_\mathbb{S}^{-1}(\mathcal{H}_{\mathbb{S}, i}^+)}$. If $x = 0$, clearly, we have
\[x + e_{i, \sube} = e_{i, \sube} \in V_{i, \subg}^{\halfperp} \subset \pi_\mathbb{S}^{-1}(\mathcal{H}_{\mathbb{S}, i}^+).\]
Otherwise, it is easy to see that $\pi_\mathbb{S}(x) \in \overline{\mathcal{H}_{\mathbb{S}, i}^+}$. Clearly, we may apply Lemma \ref{banana_description}, provided we replace strict inequalities with non-strict ones. We reuse the notation $x = x_{\subl} + x_{\sube} + x_{\subg}$ from that lemma. Let us distinguish three cases:
\begin{itemize}
\item If $x_{\sube} \geq 0$, then we have
\[\frac{\|x_{\subl}\|}{\|x_{\subg} + x_{\sube}\|} \leq \tan \eps_i.\]
We still have $x_{\sube} + e_{\sube} \geq 0$ and $\|x_{\sube} + e_{\sube}\| > \|x_{\sube}\|$, hence
\[\frac{\|x_{\subl}\|}{\|x_{\subg} + x_{\sube} + e_{\sube}\|} < \frac{\|x_{\subl}\|}{\|x_{\subg} + x_{\sube}\|} \leq \tan \eps_i,\]
and we conclude that $x + e_{\sube} \in \pi_\mathbb{S}^{-1}(B_{N}(\pi_\mathbb{S}(V_{\subg}^{\halfperp}), \eps_i))$.
\item If $x_{\sube} \leq -e_{\sube}$, then, similarly,
\[\frac{\|x_{\subl} + x_{\sube} + e_{\sube}\|}{\|x_{\subg}\|} < \frac{\|x_{\subl} + x_{\sube}\|}{\|x_{\subg}\|} \leq \tan \eps_i,\]
and we reach the same conclusion.
\item If $-e_{\sube} < x_{\sube} < 0$, then we have
\[\frac{\|x_{\subl} + x_{\sube}\|}{\|x_{\subg}\|} \leq \tan \eps_i,\]
from which we deduce
\begin{align*}
\|x_{\subl}\| &< \|x_{\subl} + x_{\sube}\| \\
          &\leq (\tan \eps_i) \|x_{\subg}\| \\
          &< (\tan \eps_i) \|x_{\subg} + x_{\sube} + e_{\sube}\|;
\end{align*}
since $x_{\sube} + e_{\sube} \geq 0$, we reach again the same conclusion.
\qed
\end{itemize}
\end{proof}

\begin{lemma}
\label{regions_disjoint_open}
The set ${\bf T}$ is open.
\end{lemma}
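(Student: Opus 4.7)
Write $\overline{\mathcal{H}_i^\sigma({\bf t})} = A_i^\sigma + \sigma u_i({\bf t})$, where $A_i^\sigma := \overline{\pi_\mathbb{S}^{-1}(\mathcal{H}_{\mathbb{S}, i}^\sigma)}$ is a fixed closed cone with apex at $0$ and $u_i({\bf t})$ depends linearly (hence continuously) on $t_i$. Since $G$ is Schottky, the spherical sets $\overline{\mathcal{H}_{\mathbb{S}, i}^\sigma}$ are pairwise disjoint, so $A_i^\sigma \cap A_j^\tau = \{0\}$ for $(i, \sigma) \neq (j, \tau)$. The difficulty is that the affine sets $\overline{\mathcal{H}_i^\sigma({\bf t})}$ are unbounded, so pairwise disjointness is not \emph{a priori} an open condition --- translating two disjoint closed unbounded sets in $\mathbb{R}^N$ can create an intersection. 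My plan is to argue by sequential compactness, using the conical structure to confine any potential witness of non-disjointness to a compact ball.

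The main technical step is the following geometric lemma: if $A, B \subset \mathbb{R}^N$ are closed cones with apex at the origin and $A \cap B = \{0\}$, then for every $M > 0$ there exists $R > 0$ such that for all $v, w$ with $\|v\|, \|w\| \leq M$, every $x \in (A + v) \cap (B + w)$ satisfies $\|x\| \leq R$. I would prove this by contradiction: given a sequence $x_n \in (A + v_n) \cap (B + w_n)$ with $\|x_n\| \to \infty$ and $\|v_n\|, \|w_n\| \leq M$, write $x_n = a_n + v_n = b_n + w_n$ with $a_n \in A$, $b_n \in B$. Since $\|a_n - b_n\| = \|w_n - v_n\| \leq 2M$ is bounded while $\|a_n\|, \|b_n\| \to \infty$, the two normalised sequences $a_n/\|a_n\|$ and $b_n/\|b_n\|$ differ by $O(1/\|a_n\|)$; passing to a subsequence yields a common limit $a^* = b^* \in A \cap B$ on the unit sphere, contradicting $A \cap B = \{0\}$. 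This lemma is the only nontrivial step; the rest of the argument is routine.

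Now suppose for contradiction there exist ${\bf t}_0 \in {\bf T}$ and a sequence ${\bf t}_n \to {\bf t}_0$ with ${\bf t}_n \notin {\bf T}$. By continuity of each $u_i$, the norms $\|u_i({\bf t}_n)\|$ are uniformly bounded by some $M$, so the lemma applied to each of the finitely many pairs $(A_i^\sigma, A_j^\tau)$ with $(i, \sigma) \neq (j, \tau)$ yields a uniform $R$ such that any point of any intersection $\overline{\mathcal{H}_i^\sigma({\bf t}_n)} \cap \overline{\mathcal{H}_j^\tau({\bf t}_n)}$ with $(i, \sigma) \neq (j, \tau)$ lies in the closed ball of radius $R$ about the origin. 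For each $n$, pick indices $(i_n, \sigma_n) \neq (j_n, \tau_n)$ and a witness point $x_n$ of the non-disjointness. Since there are only finitely many index pairs and the $x_n$ lie in a common compact ball, pass to a subsequence with constant indices $(i, \sigma) \neq (j, \tau)$ and $x_n \to x$. Passing to the limit in $x_n - \sigma u_i({\bf t}_n) \in A_i^\sigma$ (using that $A_i^\sigma$ is closed and $u_i$ is continuous) yields $x \in \overline{\mathcal{H}_i^\sigma({\bf t}_0)}$, and the analogous argument gives $x \in \overline{\mathcal{H}_j^\tau({\bf t}_0)}$, contradicting ${\bf t}_0 \in {\bf T}$.
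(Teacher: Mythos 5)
Your proof is correct, and it takes a somewhat different route from the paper. The paper argues directly: at a point ${\bf t}_0 \in {\bf T}$ it shows the closed sets $\overline{\mathcal{H}_i^\pm({\bf t}_0)}$ are not merely disjoint but separated by a positive distance $d_{\min}$ (compactness inside a large ball, plus the fact that outside the ball the sets are governed by their disjoint compact projections onto $\mathbb{S}$), and then observes that moving each apex $u_i$ by less than $d_{\min}/2$ cannot create an intersection; continuity of ${\bf t} \mapsto u_i$ finishes the argument. You instead argue by contradiction with a sequence ${\bf t}_n \to {\bf t}_0$, and your key cone lemma --- that two closed cones with apex at the origin meeting only at $0$ have, after translation by vectors of norm at most $M$, their intersection confined to a ball of radius $R(M)$ --- is proved cleanly by normalising the two unbounded sequences and extracting a common limit direction; the final compactness-and-limit step is routine and sound (the identification $\overline{\mathcal{H}_i^\sigma({\bf t})} = A_i^\sigma + \sigma u_i({\bf t})$ and the disjointness $A_i^\sigma \cap A_j^\tau = \{0\}$ both check out). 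The two proofs exploit the same underlying geometry (the cones' traces on the sphere are compact and pairwise disjoint, so only a compact region can cause trouble), but yours is qualitative where the paper's is quantitative: your cone lemma is arguably a more rigorous formulation of the paper's brief ``asymptotically separated'' remark, while the paper's version has the side benefit of explicitly producing the positive separation $d_{\min}$ between the $\mathcal{\tilde{H}}_i^\sigma$, which is reused later in the proof of Lemma \ref{gap_bounded_below}; if you wanted your argument to serve the rest of the paper unchanged, you would extract that separation statement as well (your cone lemma yields it with little extra work).
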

\begin{proof}
Let ${\bf t}_0 = (t_{0,1}, \ldots, t_{0,n})$ be any element of ${\bf T}$. We know that any two of the sets $\overline{\mathcal{H}_i^\pm({\bf t}_0)}$ are disjoint; we claim that they are separated by a positive distance. Indeed, take any ball $B$ whose radius is large compared to ${\bf t}_0$. Then the parts that fall inside $B$ are compact and disjoint, hence separated by a positive distance. As for the parts that fall outside $B$, they are separated because asymptotically, their projections onto $\mathbb{S}$ --- namely $\overline{\mathcal{H}_{\mathbb{S}, i}^\pm}$ --- are also compact and disjoint.

Let $d_{\min}$ be the smallest of these distances. Consider the set of all ${\bf t} = (t_1, \ldots, t_n)$ such that for every index $i$, $\|u_i - u_{0,i}\| < \frac{d_{\min}}{2}$. Then clearly this set is a neighborhood of ${\bf t}_0$, and is included in ${\bf T}$.
\qed
\end{proof}

\begin{proposition}
\label{fundamental_region}
For any ${\bf t} \in {\bf T}$, the action of the affine deformation $\Gamma := G({\bf t})$ on the affine space $\mathbb{R}^{d+1,d}$ has fundamental domain $\mathcal{H}^0$. More precisely:
\begin{enumerate}[(i)]
\item The images of $\mathcal{H}^0$ under the elements of $\Gamma$ are pairwise disjoint;
\item The images of its closure cover the whole space:
\[\bigcup_{\gamma \in \Gamma} \overline{\gamma(\mathcal{H}^0)} = \mathbb{R}^{d+1,d}.\]
\end{enumerate}
\end{proposition}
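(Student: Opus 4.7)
For (i), I plan to run a Schottky-style ping-pong argument. The domain $\mathcal{\tilde{H}}_i^+$ is defined precisely so that $\gamma_i(\mathbb{R}^{d+1,d} \setminus \overline{\mathcal{\tilde{H}}_i^-}) = \mathcal{\tilde{H}}_i^+$, and the analogous identity for $\gamma_i^{-1}$ follows formally. Using ${\bf t} \in {\bf T}$ together with Proposition~\ref{tennis_ball} and the aperture choice \eqref{eq:eps_i_and_eps} to guarantee that the $2n$ closures $\overline{\mathcal{\tilde{H}}_i^\sigma}$ are pairwise disjoint, an induction on word length shows $\gamma(\mathcal{H}^0) \subset \mathcal{\tilde{H}}_{i_k}^{\sigma_k}$ for every nontrivial reduced word $\gamma = \gamma_{i_k}^{\sigma_k} \cdots \gamma_{i_1}^{\sigma_1}$. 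Since $\mathcal{H}^0$ is disjoint from all $\mathcal{\tilde{H}}_j^\tau$ by construction, this yields (i).

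For (ii), I plan to argue by contradiction. Suppose $x_0 \in \mathbb{R}^{d+1,d}$ lies in no set $\overline{\gamma(\mathcal{H}^0)}$. Since $x_0 \notin \overline{\mathcal{H}^0}$, there exist $(i_1, \sigma_1)$ with $x_0 \in \mathcal{\tilde{H}}_{i_1}^{\sigma_1}$; set $x_1 := \gamma_{i_1}^{-\sigma_1}(x_0)$. The hypothesis forces $x_1 \notin \overline{\mathcal{H}^0}$, and the ping-pong inclusion $\gamma_{i_1}^{-\sigma_1}(\mathcal{\tilde{H}}_{i_1}^{\sigma_1}) \cap \overline{\mathcal{\tilde{H}}_{i_1}^{-\sigma_1}} = \emptyset$ forbids $x_1 \in \overline{\mathcal{\tilde{H}}_{i_1}^{-\sigma_1}}$, so $x_1 \in \mathcal{\tilde{H}}_{i_2}^{\sigma_2}$ for some $(i_2, \sigma_2) \neq (i_1, -\sigma_1)$. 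Iterating produces an infinite reduced word $(i_k, \sigma_k)_{k \geq 1}$ and points $x_k := \gamma_{i_k}^{-\sigma_k} \cdots \gamma_{i_1}^{-\sigma_1}(x_0) \in \mathcal{\tilde{H}}_{i_{k+1}}^{\sigma_{k+1}}$ for all $k$.

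The crux, and the step I expect to be the main obstacle, is to derive a contradiction from this infinite descent, in the spirit of section~4 of \cite{CG00}. I would track the nested forward cones $T_k := \gamma_{i_1}^{\sigma_1} \cdots \gamma_{i_k}^{\sigma_k}\bigl(\mathcal{\tilde{H}}_{i_{k+1}}^{\sigma_{k+1}}\bigr) \ni x_0$. By Proposition~\ref{product_pseudohyperbolic} applied to cyclically reduced conjugates of the partial products, their linear parts are pseudohyperbolic with frames whose MTIS components converge, via the Hausdorff bound, to those of $g_{i_1}^{\sigma_1}$; so the angular aperture of $T_k$ stabilizes around a fixed half-space. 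The apex $a_k := \gamma_{i_1}^{\sigma_1} \cdots \gamma_{i_k}^{\sigma_k}(\sigma_{k+1} u_{i_{k+1}})$ evolves as a twisted Birkhoff sum of the translational data; in coordinates adapted to the limiting frame, exploiting that $1$ is the unique unit-modulus eigenvalue of each $g_i$ (on $V_{i, \sube}$) and that for ${\bf t}$ near the witness ${\bf t}_0$ of Lemma~\ref{regions_disjoint_nonempty} the $u_i$ have a definite sign along $e_{i,\sube}$, one expects to show that the $e_{\sube}$-component of $a_k$ diverges. Once $\|a_k\| \to \infty$ along a direction transverse to the (bounded) angular opening of $T_k$, the fixed point $x_0$ cannot remain in $T_k$, yielding the desired contradiction. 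Blending the pseudohyperbolic contraction from section~\ref{sec:group} with the explicit geometry of the tennis-ball cones from section~\ref{sec:tennis_ball} is where the technical effort will concentrate.
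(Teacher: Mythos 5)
Your part (i) is exactly the paper's argument (ping-pong identities for the $\mathcal{\tilde{H}}_i^\pm$, disjointness from ${\bf t} \in {\bf T}$, induction on word length), and your setup for part (ii) — the sequence $(i_k,\sigma_k)$, the reducedness of the word, the nested domains $\gamma^{[k]}(\mathcal{\tilde{H}}_{i_{k+1}}^{\sigma_{k+1}}) \ni x_0$ — also coincides with the paper. But the crux that you yourself flag as the main obstacle is genuinely missing, and the route you sketch for it has concrete problems. First, your positivity input ("for ${\bf t}$ near the witness ${\bf t}_0$ of Lemma~\ref{regions_disjoint_nonempty} the $u_i$ have a definite sign along $e_{i,\sube}$") would at best prove the statement on a neighborhood of ${\bf t}_0$, whereas the Proposition is asserted for every ${\bf t} \in {\bf T}$; no such sign condition is established (or used) anywhere, and deducing the divergence of your "twisted Birkhoff sum" from it is precisely the hard lemma, not a routine computation. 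Second, $\|a_k\| \to \infty$ is not by itself a contradiction: the apex can drift to infinity along directions inside the wing $V_{1,\subg}$ (hence inside the angular opening of the cone) without ever expelling the fixed point $x_0$; what must diverge is the component of the supporting data along a direction \emph{transverse} to the wing. Third, Proposition~\ref{product_pseudohyperbolic} controls frames only for cyclically reduced words; passing to "cyclically reduced conjugates of the partial products" controls the frame of the conjugate, not the geometry of the actual domain $T_k = \gamma^{[k]}(\mathcal{\tilde{H}}_{i_{k+1}}^{\sigma_{k+1}})$, and you also need the bookkeeping step (replacing $x_0$ by some $\gamma_i^\sigma(x_0)$) ensuring that $\gamma^{[k]}$ is cyclically reduced for infinitely many $k$.

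The paper closes the gap by a different, more robust mechanism that uses no positivity of the translational parts. One intersects the nested domains with the affine positive-definite slice $x_0 + S$ and sets $\Delta := S \cap V_{1,\subg}^{\halfperp}$, a half-line, with $P \subset S$ the orthogonal hyperplane; by \eqref{eq:eps_i_and_eps} and Lemma~\ref{angle_control}, each $\gamma^{[k]}(\mathcal{\tilde{H}}_{i_{k+1}}^{\sigma_{k+1}}) \cap (x_0+S)$ lies above a hyperplane $P_k$ parallel to $P$, and the heights $a_k$ along $\Delta$ increase and are bounded above by $x_0$. The whole point is then Lemma~\ref{gap_bounded_below}: whenever $\gamma^{[k]}$ is cyclically reduced, the increment $\delta_k$ satisfies $\|\delta_k\| \geq \delta_{\min} > 0$ uniformly in $k$. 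This is proved by combining three facts: $g^{[k]}$ is pseudohyperbolic, $\frac{\eps}{3}$-separated and $1$-contracting with $V_{\subg}(g^{[k]})$ at angle $\ll_\eps s(G)$ from $V_{1,\subg}$ (Proposition~\ref{product_pseudohyperbolic}), so $g^{-[k]}$ does not expand on $V_{\subge}(g^{[k]})$ up to an $\eps$-dependent constant; a point $x_k \in P_k$ in the $k$-th domain and the point $y_{k-1} = x_k - \eta_k \in P_{k-1}$ (where $\eta_k$ is the projection of $\delta_k$ onto $V_{\subge}(g^{[k]}) \cap S$ parallel to $P$) are sent by $\gamma^{-[k]}$ into the closures of the two distinct tiles $\mathcal{\tilde{H}}_{i_{k+1}}^{\sigma_{k+1}}$ and $\mathcal{\tilde{H}}_{i_k}^{-\sigma_k}$; and these tiles are separated by the fixed distance $d_{\min} > 0$ of Lemma~\ref{regions_disjoint_open}. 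Hence $\|\eta_k\| \gg_\eps d_{\min}$ and $\|\delta_k\| \gg_\eps d_{\min}$, giving the contradiction for every ${\bf t} \in {\bf T}$ at once. You would need to either reproduce an argument of this type or supply a complete proof of your Birkhoff-sum divergence valid on all of ${\bf T}$; as it stands, the decisive estimate is absent.
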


\begin{proof}
Let us fix a value ${\bf t} = (t_1, \ldots, t_n) \in {\bf T}$; we call $\Gamma =\;<\gamma_1, \ldots, \gamma_n>\;:= G({\bf t})$ the corresponding affine deformation. The first thing to understand is that the domains $\mathcal{H}_i^\pm$ (from now on, we shall no longer mention the dependence on ${\bf t}$) satisfy "ping-pong identities" similar to \eqref{eq:sph_ping_pong}. Namely, it follows from \eqref{eq:sph_ping_pong} that
\begin{align*}
\gamma_i \left( \mathbb{R}^{d+1,d} \setminus \overline{\mathcal{H}_i^-} \right)
  &= \gamma_i \left( \mathbb{R}^{d+1,d} \setminus \overline{\pi_\mathbb{S}^{-1}(\mathcal{H}_{\mathbb{S}, i}^-) - u_i} \right) \\
  &= g_i \left( \pi_\mathbb{S}^{-1} \left( \mathbb{S} \setminus
            \overline{\mathcal{H}_{\mathbb{S}, i}^-} \right)\right) - g_i(u_i) + t_i \\
  &\subset \pi_\mathbb{S}^{-1}(\mathcal{H}_{\mathbb{S}, i}^+) + u_i \\
  &= \mathcal{H}_i^+,
\end{align*}
and the same holds for $\gamma_i^{-1}$. Thus we get:
\begin{equation}
\label{eq:aff_ping_pong}
\begin{cases}
\gamma_i     \left( \mathbb{R}^{d+1,d} \setminus \overline{\mathcal{H}_i^-} \right)
  \subset \mathcal{H}_i^+ \\
\gamma_i^{-1}\left( \mathbb{R}^{d+1,d} \setminus \overline{\mathcal{H}_i^+} \right)
  \subset \mathcal{H}_i^-.
\end{cases}
\end{equation}
If we replace $\mathcal{H}_i^\pm$ by $\mathcal{\tilde{H}}_i^\pm$, the inclusions become sharp equalities:
\begin{subequations}
\label{eq:sharp_ping_pong}
  \begin{equation}
  \label{eq:spp1}
    \gamma_i \left(\mathbb{R}^{d+1,d} \setminus \overline{\mathcal{\tilde{H}}_i^-}\right) = \mathcal{\tilde{H}}_i^+;
  \end{equation}
  \begin{equation}
  \label{eq:spp2}
    \gamma_i^{-1} \left(\mathbb{R}^{d+1,d} \setminus \overline{\mathcal{\tilde{H}}_i^+}\right) = \mathcal{\tilde{H}}_i^-.
  \end{equation}
\end{subequations}
Indeed \eqref{eq:spp1} is true by definition of $\mathcal{\tilde{H}}_i^+$, and \eqref{eq:spp2} follows from \eqref{eq:spp1} because $\gamma_i$ is continuous and $\mathcal{H}_i^-$ is a regular domain.

In order to have a real "ping-pong configuration", we also need to check that the sets $\mathcal{\tilde{H}}_i^\pm$ are pairwise disjoint. But using their definition and \eqref{eq:aff_ping_pong}, we know that they are included in the sets $\mathcal{\tilde{H}}_i^\pm$, which are pairwise disjoint by hypothesis (${\bf t} \in {\bf T}$).
\begin{enumerate}[(i)]
\item is now trivial. Indeed, we see by induction that for every $\gamma = \gamma_{i_1}^{\sigma_1} \ldots \gamma_{i_k}^{\sigma_k} \in \Gamma$, the image $\gamma(\mathcal{H}^0)$ lies in $\mathcal{\tilde{H}}_{i_1}^{\sigma_1}$, which is by definition disjoint from $\mathcal{H}^0$.

\item is the hard part. The particular case $d=1$ was done by Drumm in \cite{Dru93} (proof of Theorem 4); see also \cite{CG00}, section 4. Our proof is closely analogous.

From now on, we work in metric given by $N_0$.

Before proceeding, we need a small geometric lemma.
\end{enumerate}
\end{proof}

\begin{lemma} \mbox{ }
\label{angle_control}
\begin{enumerate}[(i)]
\item Let $V$ and $W$ be two MTIS'es. Then we have
\[\alpha(V_Q^\perp, W_Q^\perp) = \alpha(V, W)\]
(where $V_Q^\perp$ means the space $Q$-orthogonal to $V$).
\item Let $V'$ be a space $Q$-orthogonal to some MTIS $V$ and $x \in S$. Then we have
\[\sin \alpha(x,\; V' \cap S) = \sqrt{2} \sin \alpha(x,\; V').\]
\end{enumerate}
\end{lemma}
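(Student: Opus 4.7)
The plan for part (i) is to pass to $N_0$-orthogonal complements and then exploit both the $N_0$-Hodge star (which preserves the natural angle between subspaces of the same dimension, compatibly with the angle appearing in Lemma~\ref{pseudoh-to-prox}~(iv)) and the fact that the reflection fixing $S$ and negating $T$ is an $N_0$-isometry. First I will use the bijection \eqref{eq:MTIS_bijection} to write $V = \{t + f_V(t) : t \in T\}$, and check the clean $N_0$-orthogonal splitting $V_Q^\perp = V \oplus L_V$, where $L_V \subset S$ is the $1$-dimensional $N_S$-orthogonal complement of $\pi_S(V) = f_V(T)$. The inclusion $V \subset V_Q^\perp$ is automatic; the inclusion $L_V \subset V_Q^\perp$ is a one-line check using \eqref{eq:form_decomposition} (since $L_V \subset S$ and $L_V \perp_{N_S} f_V(T) = \pi_S(V)$); and a dimension count closes the sum.

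Next, a direct calculation in the parametrization $V_Q^\perp = \{s + f_V^T(s) : s \in S\}$ (with $f_V^T$ the $N_0$-adjoint of $f_V$) will yield
\[
(V_Q^\perp)^{\perp_{N_0}} \;=\; \{-t + f_V(t) : t \in T\} \;=\; \sigma_S(V),
\]
where $\sigma_S : s + t \mapsto s - t$ ($s \in S$, $t \in T$) is the $N_0$-isometry fixing $S$ and negating $T$. With this identification, part (i) follows in two lines: $N_0$-orthogonal complementation preserves the angle between subspaces of equal dimension (Hodge duality), and $\sigma_S$ is an $N_0$-isometry, so
\[
\alpha(V_Q^\perp, W_Q^\perp) = \alpha((V_Q^\perp)^{\perp_{N_0}}, (W_Q^\perp)^{\perp_{N_0}}) = \alpha(\sigma_S(V), \sigma_S(W)) = \alpha(V, W).
\]

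For part (ii), interpreting $V'$ as $V_Q^\perp$, the computation is direct. I will decompose $S = S_V \oplus L_V$ (with $S_V = f_V(T)$) and write $x = x_1 + x_2$ accordingly. Since $V \cap S = \{0\}$, we have $V_Q^\perp \cap S = L_V$, so $\sin\alpha(x, V_Q^\perp \cap S) = \|x_1\|/\|x\|$. To compute $\sin\alpha(x, V_Q^\perp)$, I parametrize $y \in V_Q^\perp$ as $y = s_1 + s_2 + f_V^{-1}(s_1)$ with $s_1 \in S_V$, $s_2 \in L_V$; the asymmetric $N_0$-scaling $\|y\|^2 = 2\|s_1\|^2 + \|s_2\|^2$, together with $\langle x, y\rangle_{N_0} = \langle x_1, s_1\rangle + \langle x_2, s_2\rangle$, leads, after substituting $(a,b) = (\sqrt{2}\,s_1, s_2)$ and applying Cauchy--Schwarz, to $\sin\alpha(x, V_Q^\perp) = \|x_1\|/(\sqrt{2}\,\|x\|)$. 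The ratio is $\sqrt{2}$, as claimed.

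The main difficulty will be the identification $(V_Q^\perp)^{\perp_{N_0}} = \sigma_S(V)$ in part (i), which requires carefully tracking how the $Q$- and $N_0$-orthogonality structures differ: they differ precisely by the reflection $\sigma_S$, and this is really the geometric content of the lemma. Once that bookkeeping is in place, both parts reduce to clean short arguments, with the $\sqrt{2}$ in (ii) visibly originating from the asymmetric $N_0$-geometry of $V_Q^\perp$ along its $V$- and $L_V$-directions.
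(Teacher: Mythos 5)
Your proof is correct. For part (i) it is essentially the paper's own argument: the paper's key identity $V_Q^\perp = \varsigma\left(V_{N_0}^\perp\right)$, where $\varsigma := \Id_S \oplus (-\Id_T)$, is exactly your $(V_Q^\perp)^{\perp_{N_0}} = \varsigma(V)$ after taking one more $N_0$-orthogonal complement, and both proofs then conclude from the same two facts ($N_0$-complementation preserves the angle between equidimensional subspaces, and $\varsigma$ is an $N_0$-isometry); your derivation through the graph parametrization \eqref{eq:MTIS_bijection} is just a more explicit verification of that identity. Be aware that, exactly like the paper's ``obvious'' step, your ``Hodge duality'' step is valid only for the Hausdorff (largest principal angle) reading of $\alpha$ in (i): the minimum angular distance between $V_Q^\perp$ and $W_Q^\perp$ is identically zero, since these two $(d+1)$-dimensional subspaces of $\mathbb{R}^{d+1,d}$ always share a line; the Hausdorff reading is also the one needed downstream, where the input to Lemma \ref{gap_bounded_below} is the Hausdorff bound of Proposition \ref{product_pseudohyperbolic}. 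For part (ii) you take a genuinely different route: the paper argues synthetically, observing that any $2$-plane contained in $V' = V_Q^\perp$ meets $S$ at angle $\frac{\pi}{4}$ (it contains isotropic but no negative vectors) and then invoking the spherical law of sines, whereas you compute both distances directly in the $N_0$-orthogonal splittings $V' = V \oplus L_V$ and $S = \pi_S(V) \oplus L_V$, where the $\sqrt{2}$ appears as the norm distortion $\|t + f_V(t)\|_{N_0}^2 = 2\,N_S(f_V(t))$ along $V$; your Cauchy--Schwarz optimization does give $\sin\alpha(x, V') = \|x_1\|/(\sqrt{2}\,\|x\|)$ against $\sin\alpha(x, V'\cap S) = \|x_1\|/\|x\|$, using $V\cap S = 0$ to identify $V'\cap S = L_V$. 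Your computation is longer but mechanical and makes the constant $\sqrt{2}$ completely transparent; the paper's is shorter but leans on an unstated spherical-trigonometric configuration.
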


\begin{proof} \mbox{ }
\begin{enumerate}[(i)]
\item It is obvious that $\alpha(V_{N_0}^\perp, W_{N_0}^\perp) = \alpha(V, W)$ (where $V_{N_0}^\perp$ means, similarly, the space $N_0$-orthogonal to $V$). On the other hand, for every MTIS $V$, we have $V_Q^\perp = \varsigma(V_{N_0}^\perp)$, where the map $\varsigma := \Id_S \oplus (-\Id_T)$ is an $N_0$-isometry. The required equality follows.
\item First note that any plane contained in $V'$ intersects $S$ at an angle of $\frac{\pi}{4}$. Indeed, such a plane contains vectors $x$ such that $Q(x, x) = 0$ (because it intersects $V$), but no vectors $x$ such that $Q(x, x) < 0$. On the other hand, it follows from the definition of $N_0$ and $Q$ that $\alpha(x, S) <$ (resp. $=$, $>$) $\frac{\pi}{4}$ iff $Q(x, x) >$ (resp. $=$, $<$) $0$.

Now let $X$ be the 3-space spanned by $x$, $\pi_{V'}(x)$ and the line $V' \cap S$ (here $\pi_{V'}$ stands for the $N_0$-orthogonal projection onto $V'$). We know that the planes $V' \cap X$ and $S \cap X$ intersect at an angle of $\frac{\pi}{4}$, and that the plane spanned by $x$ and $\pi_{V'}(x)$ is perpendicular to $V' \cap X$. The spherical version of the law of sines yields the identity $\sin \alpha(x, V' \cap S) = \sqrt{2} \sin \alpha(x, \pi_{V'}(x))$, QED.
\qed
\end{enumerate}
\end{proof}

\begin{proof}[of Proposition \ref{fundamental_region}, continued]
We proceed by contradiction: let $x_0 \in \mathbb{R}^{d+1,d}$ such that
\begin{equation}
\label{eq:x_condition}
\forall \gamma \in \Gamma,\quad \gamma(x_0) \not\in \overline{\mathcal{H}^0}.
\end{equation}
Then there is a (unique) sequence $(i_k, \sigma_k)$ (indexed by $k \geq 1$) of elements of $\{1, \ldots, n\} \times \{-, +\}$, such that for all $k \geq 0$, we have
\[\gamma^{-[k]}(x_0)
    \in \mathcal{\tilde{H}}_{i_{k+1}}^{\sigma_{k+1}},\]
where $\gamma^{[k]} := \gamma_{i_1}^{\sigma_1} \ldots \gamma_{i_k}^{\sigma_k}$, and $\gamma^{-[k]}$ is shorthand for $(\gamma^{[k]})^{-1}$. Indeed, by induction, suppose that we have constructed the first $k$ terms (for some $k \geq 0$); then we have, by hypothesis
\[\gamma^{-[k]}(x_0)
    \;\in\; \mathbb{R}^{d+1,d} \setminus \overline{\mathcal{H}^0}
    \;=\; \bigcup_{i=1}^n \bigcup_{\sigma = \pm} \mathcal{\tilde{H}}_i^\sigma,\]
which allows us to pick an appropriate pair $(i_{k+1}, \sigma_{k+1})$ (which is actually unique, since the $\mathcal{\tilde{H}}_i^\pm$ are disjoint).

Note also that the word $\gamma^{[k]}$ is always reduced, \ie for all $k \geq 1$, we have $(i_{k+1}, \sigma_{k+1}) \neq (i_k, -\sigma_k)$. Indeed, we have $\gamma^{-[k-1]}(x_0) \in \mathcal{\tilde{H}}_{i_k}^{\sigma_k}$; since $\gamma_{i_k}^{\sigma_k}$ is bijective, applying \eqref{eq:spp1} (assuming $\sigma_k = +1$; otherwise \eqref{eq:spp2}), we get
\begin{align*}
\gamma^{-[k]}(x_0) &=   \gamma_{i_k}^{-\sigma_k} \left( \gamma^{-[k-1]}(x_0) \right) \\
                   &\in \mathbb{R}^{d+1,d} \setminus \overline{\mathcal{\tilde{H}}_{i_k}^{-\sigma_k}}.
\end{align*}

We may also suppose that for infinitely many values of $k$, the word $\gamma^{[k]}$ is cyclically reduced (in other terms, $(i_k, \sigma_k) \neq (i_1, -\sigma_1)$.) Indeed, otherwise, we may replace $x_0$ by $\gamma_i^\sigma(x_0)$, where $(i, \sigma)$ is a pair such that the set of indices $k$ such that $(i_k, \sigma_k) \neq (i, -\sigma)$ is infinite and also contains 1 (such a pair always exists). Then the new value still satisfies \eqref{eq:x_condition}, and the sequence $(i_k, \sigma_k)$ changes by appending $(i, \sigma)$ at the beginning.

Without loss of generality, let us suppose that $(i_1, \sigma_1) = (1, +)$.

Now an easy induction shows that the following domains form a decreasing sequence that concentrates on $x_0$ :
\begin{equation}
\label{eq:decreasing_sequence}
\mathcal{\tilde{H}}_1^+
\supset \gamma^{[1]}(\mathcal{\tilde{H}}_{i_2}^{\sigma_2})
\supset \gamma^{[2]}(\mathcal{\tilde{H}}_{i_3}^{\sigma_3})
\supset \ldots
\ni x_0.
\end{equation}

Next, we define
\[\Delta := S \cap V_{1, \subg}^{\halfperp}\]
(recall that $S$ is a maximal positive definite space). We know that $S$ is a $(d+1)$-dimensional space, $V_{1, \subg}^{\halfperp}$ is half a $(d+1)$-dimensional space and $S \cap V_{1, \subg} = 0$ (since $S$ is positive definite and $V_{1, \subg}$ is isotropic). Thus $\Delta$ is a half-line. We also define $P$ to be the ($d$-dimensional) hyperplane of $S$ that is $N_S$-orthogonal (or, equivalently, $Q$-orthogonal, or also $N_0$-orthogonal) to $\Delta$. To avoid cumbersome periphrases, in the following, we shall often use terms such as "above" and "below", having in mind that "up" is the direction where $\Delta$ points.

Now consider the set $\mathcal{\tilde{H}}_1^+ \cap (x_0 + S)$ (here $(x_0 + S)$ stands for the affine space passing through $x_0$ and parallel to $S$). It is contained in an affine half-space of $(x_0 + S)$ lying above a hyperplane parallel to $P$. Indeed, from \eqref{eq:eps_i_and_eps} it follows that
\[\mathcal{H}_{\mathbb{S},1}^+ \subset B_{N_0} \left( \pi_\mathbb{S}(V_{1, \subg}^\halfperp), \frac{\eps}{3} \right).\]
Without loss of generality, we may suppose that $\eps \leq \diam \mathbb{P}(\mathbb{R}^{d+1, d}) = \frac{\pi}{2}$ (indeed the separation of no frame or frameset may exceed that value). Then the radius of the right-hand-side neighborhood is no larger than $\frac{\pi}{6}$. It follows from Lemma \ref{angle_control} that
\[B_{N_0} \left( \pi_\mathbb{S}(V_{1, \subg}^\halfperp), \frac{\pi}{6} \right) \cap S
  \;\;\subset\; B_{N_0}\left( \pi_\mathbb{S}(V_{1, \subg}^\halfperp \cap S), \frac{\pi}{4} \right)\]
(the angle $\frac{\pi}{4}$ is the solution to $\sin x = \sqrt{2} \sin \frac{\pi}{6}$). The desired property may be deduced from here.

Applying \eqref{eq:decreasing_sequence}, we see that for every $k \geq 0$, the domain
\[\gamma^{[k]}(\mathcal{\tilde{H}}_{i_{k+1}}^{\sigma_{k+1}}) \cap (x_0 + S)\]
is included in a half-space of $(x_0 + S)$ lying above a hyperplane parallel to $P$. We define $P_k$ to be the uppermost such hyperplane; we call $a_k$ the intersection of $P_k$ with the line containing $(x_0 + \Delta)$, and we set, for $k \geq 1$, $\delta_k = a_k - a_{k-1}$.

The result now follows from:

\begin{lemma}
\label{gap_bounded_below}
There is a constant $\delta_{\min} > 0$ such that for every $k \geq 1$, whenever $(i_k, \sigma_k) \neq (1, -)$, $\|\delta_k\|_{N_0} \geq \delta_{\min}$.
\end{lemma}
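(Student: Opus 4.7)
Plan: The strategy is to track the apex $p_k := \gamma^{[k]}(\sigma_{k+1} u_{i_{k+1}})$ of the cone $\gamma^{[k]}(\mathcal{H}_{i_{k+1}}^{\sigma_{k+1}}) \supset \gamma^{[k]}(\mathcal{\tilde{H}}_{i_{k+1}}^{\sigma_{k+1}})$ and to argue that $a_k$ equals, up to a uniformly bounded error, the $\Delta$-projection of $p_k - x_0$. By Proposition \ref{product_pseudohyperbolic} (applied to the cyclically reduced word $\gamma^{[k]}$) and iterated use of Proposition \ref{tennis_ball}, the angular opening $g^{[k]}(\mathcal{H}_{\mathbb{S}, i_{k+1}}^{\sigma_{k+1}})$ lies in a small neighborhood of $V_{1,\subg}^{\halfperp}$ on the sphere, so the intersection of the cone with $(x_0 + S)$ has its lowest $\Delta$-level controlled by the apex position. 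Thus it suffices to lower-bound the $\Delta$-component of $p_k - p_{k-1}$.

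The telescoping identity reads $p_k - p_{k-1} = g^{[k-1]}(w_k)$, where $w_k := \gamma_{i_k}^{\sigma_k}(\sigma_{k+1} u_{i_{k+1}}) - \sigma_k u_{i_k}$ depends only on the last two letters of $\gamma^{[k]}$ and hence takes finitely many values. By the ping-pong inclusion \eqref{eq:aff_ping_pong}, the point $w_k + \sigma_k u_{i_k} = \gamma_{i_k}^{\sigma_k}(\sigma_{k+1} u_{i_{k+1}})$ lies in $\mathcal{\tilde{H}}_{i_k}^{\sigma_k} \subset \mathcal{H}_{i_k}^{\sigma_k}$, while $\sigma_k u_{i_k}$ is the apex of the same wing; since ${\bf t} \in {\bf T}$ separates all of the relevant wings by a positive distance, one obtains a uniform lower bound on $\|w_k\|_{N_0}$. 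Moreover, the unit vector $w_k / \|w_k\|$ lies in $\mathcal{H}_{\mathbb{S}, i_k}^{\sigma_k}$, and hence close to the positive wing $V_{i_k, \sigma_k \subg}^{\halfperp}$.

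The hypothesis $(i_k, \sigma_k) \neq (1, -)$ is critical at the next stage: it excludes the single case in which $w_k$ could cluster arbitrarily close to $V_{1, \subl}^{\halfperp}$, the direction on which $g^{[k-1]}$ contracts most severely. In all remaining cases, $V_{i_k, \sigma_k \subg}^{\halfperp}$ is transversal to $V_{\subl}(g^{[k-1]})$, which is Hausdorff-close to $V_{1, \subl}$ by Proposition \ref{product_pseudohyperbolic}; by $\eps$-separation of the frameset, this transversality is uniform in $k$. Consequently $w_k$ admits a uniformly non-negligible component on $V_{\subg}(g^{[k-1]}) \oplus V_{\sube}(g^{[k-1]})$. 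Since $g^{[k-1]}$ fixes $V_{\sube}$ pointwise and does not contract $V_{\subg}$, the image $g^{[k-1]}(w_k)$ keeps a controlled magnitude and has direction close to $V_{1, \subg}^{\halfperp}$.

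The main obstacle is turning this qualitative picture into a strict positive lower bound on the $\Delta$-component of $g^{[k-1]}(w_k)$ with the correct sign. I would decompose $w_k$ in the frame of $g^{[k-1]}$, use the transversality above together with Lemma \ref{uniformly_equivalent} to pass between local and global norms, and invoke the orientation machinery of Proposition \ref{orientation} and Definition \ref{direction} to check that the positive-wing direction of $g^{[k-1]}(w_k)$ coincides in sign with the chosen orientation of $\Delta = S \cap V_{1,\subg}^{\halfperp}$. The Hausdorff-closeness of $V_{\subg}(g^{[k-1]})$ to $V_{1, \subg}$ supplied by Proposition \ref{product_pseudohyperbolic} and Lemma \ref{pseudoh-to-prox} (iv) then lets one pass from a bound on the $V_{\subg}(g^{[k-1]}) \oplus V_{\sube}(g^{[k-1]})$-component to a bound on the $\Delta$-component. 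A finite-compactness argument over the finitely many possible pairs $(i_k, \sigma_k, i_{k+1}, \sigma_{k+1})$ then yields the uniform constant $\delta_{\min}$.
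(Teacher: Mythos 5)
Your plan leaves the decisive quantitative step unproven, and the sketched fix does not go through. Bounding below the component of $w_k$ along $V_{\subg}(g^{[k-1]})\oplus V_{\sube}(g^{[k-1]})$ only bounds $\|g^{[k-1]}(w_k)\|_{N_0}$ from below; it does not bound its component along $\Delta$. Indeed $\Delta=S\cap V_{1,\subg}^{\halfperp}$ is $N_0$-orthogonal to $\pi_S(V_{1,\subg})$ (polarize \eqref{eq:form_decomposition}: $Q$-orthogonality of $\Delta\subset S$ to $V_{1,\subg}$ is $N_S$-orthogonality to $\pi_S(V_{1,\subg})$, which is all of $P$), so a vector whose direction is close to $V_{1,\subg}$ has $\Delta$-component negligible compared to its norm, and inside the $\frac{\eps}{3}$-neighborhood of the wing the sign can even be wrong. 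The direction of $g^{[k-1]}(w_k)$ is exactly of this kind: it lies in $\mathcal{H}_{\mathbb{S},1}^{+}$ but is typically pushed toward the $\subg$-directions, so what would really need a uniform lower bound is the $e_{1,\sube}$-type component (the eigenvalue-$1$ direction), and your transversality/orientation argument does not supply it, since directions of $\mathcal{H}_{\mathbb{S},i_k}^{\sigma_k}$ come arbitrarily close to $V_{i_k,\subg}$ where that component vanishes. The preliminary reduction is also unjustified: $a_k$ is the lowest $\Delta$-level of the slice of the cone by $x_0+S$, and that slice lies at a distance from the apex $p_k$ which tends to infinity while the cone has positive angular width, so the discrepancy between $a_k$ and the $\Delta$-level of $p_k$ need not be uniformly bounded. (A minor point: $\sigma_k u_{i_k}$ lies in the closure of $\mathcal{\tilde{H}}_{i_k}^{\sigma_k}$, so wing separation does not bound $\|w_k\|$ below; what saves you there is simply that $w_k$ ranges over finitely many nonzero vectors.)

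There is also a structural misstep: you apply Proposition \ref{product_pseudohyperbolic} to $g^{[k-1]}$, but the hypothesis $(i_k,\sigma_k)\neq(1,-)$ only makes $g^{[k]}$ cyclically reduced; $g^{[k-1]}$ may end in $\gamma_1^{-1}$ (and is empty when $k=1$), so neither the closeness of its frame to $\mathcal{V}_1$ nor its $1$-contraction is available. The paper's proof avoids all of these issues by staying inside the slice and using only the single cyclically reduced word $g^{[k]}$: it projects $\delta_k$ onto the line $V_{\subge}(g^{[k]})\cap S$ parallel to $P$ (Lemma \ref{angle_control} makes this cost at most a factor $\cos\frac{\pi}{4}$), uses that $\restr{g^{-[k]}}{V_{\subge}(g^{[k]})}$ has $N_0$-norm $\ll_\eps 1$ because $g^{[k]}$ is $1$-contracting and $\frac{\eps}{3}$-separated, and then notes that $\gamma^{-[k]}$ sends $x_k\in P_k\cap\overline{\gamma^{[k]}(\mathcal{\tilde{H}}_{i_{k+1}}^{\sigma_{k+1}})}$ and $y_{k-1}=x_k-\eta_k\in P_{k-1}$ into the closures of two distinct sets $\mathcal{\tilde{H}}$, which are $d_{\min}$-apart; hence $\|g^{-[k]}(\eta_k)\|\geq d_{\min}$ and $\|\delta_k\|\gg_\eps d_{\min}$. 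To salvage your apex-tracking scheme you would need both a uniform lower bound on the $e_{1,\sube}$-component of the increments and a genuine comparison between $a_k$ and the apex levels; these are precisely the difficulties the paper's projection trick is designed to bypass.
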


Indeed, from \eqref{eq:decreasing_sequence}, it follows that the sequence $(a_k)$ is increasing and bounded above by $x_0$. However, we have chosen $x_0$ in such a way that the condition of Lemma \ref{gap_bounded_below} occurs infinitely often. It follows that $(a_k)$ is unbounded, which is a contradiction.
\qed
\end{proof}

\begin{proof}[of Lemma \ref{gap_bounded_below}]
We still work in metric given by $N_0$.

Let $k \geq 1$ be an index such that $(i_k, \sigma_k) \neq (1, -)$, so that $g^{[k]}$ is cyclically reduced. We know that the group $G$ is pseudohyperbolic; by Proposition \ref{product_pseudohyperbolic}, we have
\[\alpha(V_{\subg}(g^{[k]}), V_{1, \subg}) \ll_\eps s(G).\]
As $s(G) \leq s_3(\eps)$, by choosing $s_3(\eps)$ small enough, we may suppose that this angle is no larger than $\frac{\pi}{6}$. By Lemma \ref{angle_control}, it follows that
\[\alpha(V_{\subge}(g^{[k]}) \cap S,\; V_{1, \subge} \cap S) \leq \frac{\pi}{4}\]
(remember that $V_{1, \subge} \cap S$ is the line containing $\Delta$).

Now let $\eta_k$ be the projection of $\delta_k$ onto $V_{\subge}(g^{[k]}) \cap S$ parallel to $P$. Then we have
\begin{equation}
\label{eq:eta_delta}
\|\delta_k\| \geq \left(\cos \frac{\pi}{4}\right) \|\eta_k\| = \frac{\sqrt{2}}{2}\|\eta_k\|.
\end{equation}

Next, still by Proposition \ref{product_pseudohyperbolic}, we know that $g^{[k]}$ is pseudohyperbolic, $\frac{\eps}{3}$-separated and 1-contracting; let $\mathcal{V}^{[k]}$ be its frame. By definition, the norm of $g^{[k]}$ restricted to $V_{\subg}(g^{[k]})$ (resp. $V_{\sube}(g^{[k]})$) is equal to $s(g^{[k]})$ (resp. $1$). It follows that
\begin{align*}
\left\| \restr{g^{-[k]}}{V_{\subge}(g^{[k]})} \right\|_{N_{\mathcal{V}^{[k]}}}
  &= \max \left(\left\| \restr{g^{-[k]}}{V_{\subg}(g^{[k]})} \right\|,\;
                \left\| \restr{g^{-[k]}}{V_{\sube}(g^{[k]})} \right\|\right) \\
  &= \max \left(\left\| (g^{[k]}_{\subg})^{-1} \right\|,\; 1 \right) \\
  &= 1.
\end{align*}
From Lemma \ref{uniformly_equivalent}, we deduce
\[\left\| \restr{g^{-[k]}}{V_{\subge}(g^{[k]})} \right\|_{N_0} \ll_\eps 1;\]
given that, by construction, $\eta_k \in V_{\subge}(g^{[k]})$, we get
\begin{equation}
\label{eq:eta_eta}
\|\eta_k\| \gg_\eps \|g^{-[k]}(\eta_k)\|.
\end{equation}

Finally, let $x_k$ be any point that lies both in $P_k$ and $\overline{\gamma^{[k]}(\mathcal{\tilde{H}}_{i_{k+1}}^{\sigma_{k+1}})}$ (the intersection is nonempty by definition of $P_k$). Set $y_{k-1} := x_k - \eta_k$. Since the orthogonal projection of $\eta_k$ onto $\Delta$ is equal to $\delta_k$, it follows that $y_{k-1} \in P_{k-1}$, and in particular $y_{k-1} \not\in \gamma^{[k-1]}(\mathcal{\tilde{H}}_{i_k}^{\sigma_k})$. Applying $\gamma^{-[k]}$, we get
\[\begin{cases}
\gamma^{-[k]}(x_k)     \in \overline{\mathcal{\tilde{H}}_{i_{k+1}}^{\sigma_{k+1}}} \\
\gamma^{-[k]}(y_{k-1}) \in \overline{\mathcal{\tilde{H}}_{i_k}^{-\sigma_k}}.
\end{cases}\]
Since $(i_{k+1}, \sigma_{k+1}) \neq (i_k, -\sigma_k)$, we have
\begin{equation}
\label{eq:eta_bound}
\|g^{-[k]}(\eta_k)\| = \|\gamma^{-[k]}(x_k) - \gamma^{-[k]}(y_{k-1})\| \geq d_{\min},
\end{equation}
where $d_{\min}$ is the smallest distance between any of the $\mathcal{\tilde{H}}_i^\sigma$ (which is nonzero as shown in the proof of Lemma \ref{regions_disjoint_open}).

Joining \eqref{eq:eta_delta}, \eqref{eq:eta_eta} and \eqref{eq:eta_bound} together, we get indeed a lower bound for $\|\delta_k\|$ that does not depend on $k$.
\qed
\end{proof}

\begin{acknowledgements}
I would like to thank my PhD advisor, Mr. Yves Benoist, whose help while I was working on this paper has been invaluable to me.
\end{acknowledgements}

\end{document}